\documentclass[english, reqno]{smfart}

\usepackage[english]{babel}
\usepackage[T1]{fontenc}
\usepackage{cfr-lm}
\usepackage{smfthm}
\usepackage{amssymb}
\usepackage{mathtools}
\usepackage[mathscr]{euscript}
\usepackage{enumitem}
\usepackage{stmaryrd}
\usepackage{microtype}
\usepackage[dvipsnames]{xcolor}
\usepackage{hyperref}

\usepackage[backend=biber, maxnames=50]{biblatex}
\usepackage{csquotes}
\renewbibmacro{in:}{}
\bibliography{main.bib}

\author{Téofil Adamski}
\email{teofil.adamski@univ-smb.fr}
\address{Université Savoie Mont Blanc, CNRS, LAMA, 73000 Chambéry, France}
\title{On non-Archimedean and motivic distributions defined by kernels}

\usepackage{tikz-cd}
\tikzcdset{
    row sep/normal=1cm, column sep/normal=1cm,
    every label/.append style={font=\normalsize},
    outer xsep=2pt,
}

\hypersetup{
    colorlinks=true,
    linkcolor=Red,
    citecolor=Red,
    bookmarksdepth=3,
}

\renewcommand{\bfseries}{\sbweight}
\DeclareMathAlphabet{\mathbf}{\encodingdefault}{\familydefault}{sb}{n}

\newcommand{\RR}{\mathbf{R}}
\newcommand{\CC}{\mathbf{C}}
\newcommand{\QQ}{\mathbf{Q}}
\newcommand{\ZZ}{\mathbf{Z}}
\newcommand{\FF}{\mathbf{F}}
\newcommand{\cf}{\hbox{\normalfont\textbf{\textl{1}}}}

\DeclareMathOperator{\ord}{ord}
\DeclareMathOperator{\ac}{\overline{ac}}
\DeclareMathOperator{\supp}{supp}
\DeclareMathOperator{\WF}{WF}
\DeclareMathOperator{\id}{id}
\DeclareMathOperator{\Hom}{Hom}

\newcommand{\Def}{\mathbf{Def}}
\newcommand{\Field}{\mathbf{Field}}
\newcommand{\LL}{\mathbf{L}}
\renewcommand{\L}{\mathscr{L}}
\newcommand{\Dis}{\mathscr{D}}
\newcommand{\Sch}{\mathscr{S}}
\newcommand{\SC}{\mathscr{E}}
\newcommand{\Cons}{\mathscr{C}}
\newcommand{\Inte}{\mathscr{I}}
\newcommand{\class}{\mathscr{C}}
\newcommand{\Ball}{\mathrm{B}}
\newcommand{\expe}{^\text{exp}}
\newcommand{\Exp}{\mathbf{E}}
\newcommand{\Ker}{\mathscr{K}}
\newcommand{\n}{\mathfrak{n}}
\newcommand{\B}{\mathscr{B}}
\newcommand{\A}{\mathscr{A}}
\renewcommand{\r}{\mathfrak{r}}

\newcommand{\fnc}[4]{\left\lvert\begin{aligned}#1&\longrightarrow#2\\#3&\longmapsto#4\end{aligned}\right.\kern-\nulldelimiterspace}
\newcommand{\fonc}[5]{#1\colon\fnc{#2}{#3}{#4}{#5}}
\newcommand{\Sys}[1]{\left\{\begin{aligned}#1\end{aligned}\right.\kern-\nulldelimiterspace}
\newcommand{\dd}{\mathop{}\mathopen{}\mathrm{d}}
\renewcommand{\to}{\longrightarrow}

\newcommand{\inner}[2]{\langle#1, #2\rangle}
\newcommand{\points}[1]{\lvert#1\rvert}
\newcommand{\ls}[1]{(\!(#1)\!)}

\newcommand{\subproof}[1]{\smallbreak\textbf{$\diamond$ #1.}}
\newcommand{\sepproof}{\bigbreak\begin{center}$*~~*~~*$\end{center}\bigbreak}

\makeatletter
\SetSymbolFont{stmry}{bold}{U}{stmry}{m}{n}
\g@addto@macro\bfseries{\boldmath}

\def\th@plain{%
    \let\thm@indent\noindent
    \thm@headfont{\bfseries\smf@boldmath\itshape}%
    \thm@notefont{\bfseries\smf@boldmath\upshape}%
    \thm@preskip\bigskipamount%
    \thm@postskip\thm@preskip%
    \thm@headpunct{\MakePointrait}
    \itshape}

\begin{document}

\frontmatter

\begin{abstract}
    As in real microlocal analysis, we prove a Schwartz kernel theorem for~$p$-adic distributions. We extend this result for motivic distributions using Cluckers--Loeser's motivic integration. In both settings, we give also a relation between the wave front sets of the distribution and its kernel.
\end{abstract}

\maketitle

\tableofcontents

\mainmatter

The theory of real distributions has been introduced by Sobolev~\cite{Sobolev} and Schwartz~\cite{Schwartz}, it generalizes the notion of functions and it creates an adapted setting to find weak solutions to partial differential equations. In this context, to each distribution on a product space, we can associate a \emph{kernel}. That object was first introduced by Schwartz~\cite{Schwartz1950} and the \emph{Schwartz kernel theorem} can be found for instance in Hörmander's book~\cite{Hormander} (see Theorem~\ref{thm:kernel-real}).

A kernel is an object which generalizes integral operators, i.e. operators~$T$ on some space of the form
\[ Tf(y) = \int f(x)K(x, y) \dd x \]
for some function $K$ (for instance, the Fourier transform is an integral operator with~$K(x, y) = e^{-i\inner xy}$). In particular, kernels generalize differential operators (see~\cite[§0.4]{AlinhacGerard}) and pseudo-differential operators (see~\cite[§I.3]{AlinhacGerard}). This generalization can be done using the theory of distributions. As an application in the theory of linear partial differential equations, kernels can be used to compute inverses of differential operators (see~\cite[§52]{Treves} or~\cite[Theorem~13.2.1 \&{} Remark~p.~185]{HormanderII}).

Let us recall some notations. Let $m$ and $n$ be two positive integers. We consider the~$m$-dimensional euclidean space $\RR^m$. Let $X$ be an open set of $\RR^m$ and $Y$ be an open set of $\RR^n$. We denote by $\Dis(X)$ the set of test functions on $X$ ---~functions of class $\class^\infty$ with compact support on $X$~--- and~$\Dis'(X)$ the set of distributions on $X$ ---~continuous linear forms on $\Dis(X)$. For tests functions $\phi$ on $X$ and $\psi$ on~$Y$, we denote by $\phi \otimes \psi$ the test function on $X \times Y$ defined by the relation
\[ \phi \otimes \psi(x, y) = \phi(x)\psi(y), \qquad \forall (x, y) \in X \times Y. \]
We can now state the Schwartz kernel theorem.

\begin{theo}[Schwartz kernel theorem, Hörmander~{\cite[Theorem~5.2.1]{Hormander}}]\label{thm:kernel-real}
    For each distribution $u$ on the product $X \times Y$, there exists a unique continuous linear map $\Ker \colon \Dis(Y) \to \Dis'(X)$ such that
    \begin{equation}\label{eq:kernel}
        \inner{u}{\phi \otimes \psi} = \inner{\Ker\psi}{\phi}, \qquad \forall \phi \in \Dis(X), \; \forall \psi \in \Dis(Y).
    \end{equation}
    Conversely, for each continuous linear map $\Ker \colon \Dis(Y) \to \Dis'(X)$, there exists a unique distribution $u$ on the product $X \times Y$ such that equality~\eqref{eq:kernel} holds.
\end{theo}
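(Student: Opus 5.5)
The first direction is direct; the converse requires building a distribution on $X \times Y$ out of its values on tensor products $\phi\otimes\psi$, and that is the substance of the theorem.

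\textbf{Direct direction.} Let $u \in \Dis'(X\times Y)$. For fixed $\psi \in \Dis(Y)$, define $\Ker\psi$ by $\inner{\Ker\psi}{\phi} = \inner{u}{\phi\otimes\psi}$.
\begin{itemize}
\item \emph{$\Ker\psi$ is a distribution.} The map $\phi \mapsto \phi\otimes\psi$ is linear and continuous from $\Dis(X)$ to $\Dis(X\times Y)$. Indeed, if $\supp\phi \subset K$ and $\supp\psi \subset L$, then $\supp(\phi\otimes\psi) \subset K\times L$. Moreover, by Leibniz, $\partial^{(\alpha,\beta)}(\phi\otimes\psi) = \partial^\alpha\phi\otimes\partial^\beta\psi$, so each seminorm of $\phi\otimes\psi$ is bounded by the product of the corresponding seminorms of $\phi$ and $\psi$.
\item \emph{$\Ker$ is continuous.} The same estimate shows that $\Ker\colon \Dis(Y)\to\Dis'(X)$ is continuous for the weak-$*$ topology. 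Since $u$ has finite order $k$ on $K\times L$, one can even get a bound $\lvert\inner{\Ker\psi}{\phi}\rvert \le C\,\lVert\phi\rVert_{C^k}\lVert\psi\rVert_{C^k}$.
\item \emph{Uniqueness.} Any $\Ker$ satisfying~\eqref{eq:kernel} is determined on every pair $(\psi,\phi)$, so it is unique.
\end{itemize}

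\textbf{Converse: uniqueness.} Finite sums of tensor products $\phi\otimes\psi$ are dense in $\Dis(X\times Y)$. To see this, take $\chi \in \Dis(X\times Y)$ with support in $K\times L$. Mollify, and approximate by truncated Fourier series, or by polynomials via Stone--Weierstrass applied with derivatives. Then multiply by cutoffs $a\otimes b$ equal to $1$ near $K\times L$. This gives convergence in $\Dis_{K'\times L'}$. Density then implies that $u$ is unique.

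\textbf{Converse: existence.} Given a continuous $\Ker$, the bilinear form $B(\phi,\psi) = \inner{\Ker\psi}{\phi}$ is separately continuous on $\Dis_K(X)\times\Dis_L(Y)$. These are Fréchet spaces, so by the Banach--Steinhaus theorem $B$ is jointly continuous. Hence there exist $k$ and $C$ with
\[
\lvert B(\phi,\psi)\rvert \le C\,\lVert\phi\rVert_{C^k}\lVert\psi\rVert_{C^k}.
\]
The main obstacle is to extend $B$ to a continuous linear form on $\Dis_{K\times L}(X\times Y)$. I would follow Hörmander and regularize. Choose cutoffs and expand in a Fourier series on a torus containing $K\times L$, with $\chi(x,y) = \sum_{\xi,\eta} c_{\xi\eta}\, e_\xi(x)e_\eta(y)$. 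Set
\[
\inner{u}{\chi} = \sum_{\xi,\eta} c_{\xi\eta}\, B(a e_\xi, b e_\eta).
\]
The bound on $B$ gives $\lvert B(ae_\xi, be_\eta)\rvert \le C'(1+\lvert\xi\rvert)^k(1+\lvert\eta\rvert)^k$. The coefficients $c_{\xi\eta}$ decay rapidly, with rate controlled by finitely many derivatives of $\chi$, so the series converges absolutely. This defines a continuous functional on $\Dis_{K\times L}$.

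For a tensor $\chi = \phi\otimes\psi$, the double Fourier series factors, and we recover $B(\phi,\psi)$. Independence of the choices follows from this agreement on tensors together with density. Patching over an exhausting sequence $K_j\times L_j$ gives $u \in \Dis'(X\times Y)$ satisfying~\eqref{eq:kernel}.
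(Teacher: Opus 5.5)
The paper gives no proof of this statement; it is quoted from Hörmander~\cite[Theorem~5.2.1]{Hormander}. Your sketch is essentially Hörmander's own argument, and it is correct: Banach--Steinhaus gives a joint $C^k$ bound on $B$, a double Fourier series on a torus extends $B$, density of tensors gives uniqueness, and you patch over an exhaustion.

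The instructive comparison is with the paper's proof of the $p$-adic analogue, Theorem~\ref{thm:kernel-p}. There, Lemma~\ref{lem:surj} makes $\Sch(X)\otimes_\CC\Sch(Y)\to\Sch(X\times Y)$ \emph{surjective}: every Schwartz--Bruhat function is a finite combination of characteristic functions of product polydiscs. Distributions are also plain linear forms. So $u$ is defined directly on such finite sums, and the only work is well-definedness, done by refining two decompositions to a common radius. All the analysis in your proof (continuity of $\Ker$, the joint estimate, rapid decay of Fourier coefficients, density) is needed precisely because tensors are only dense in $\Dis(X\times Y)$, not exhaustive.

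Two details need fixing when you write yours out.
\begin{enumerate}
\item Obtain the joint estimate on $\Dis_{\supp a}(X)\times\Dis_{\supp b}(Y)$ rather than on $\Dis_K(X)\times\Dis_L(Y)$, since you evaluate $B$ at $ae_\xi$ and $be_\eta$. This is also why $\supp a\times\supp b$ must lie in the fundamental cube of the torus, with $a=1$ near $K$ and $b=1$ near $L$.
\item Choose the exhaustion with $\supp a_j\subset K_{j+1}$ and $\supp b_j\subset L_{j+1}$. Then the Fourier partial sums $\sum_{\lvert\xi\rvert,\lvert\eta\rvert\le N}c_{\xi\eta}(a_je_\xi)\otimes(b_je_\eta)$ of any $\chi$ in $\Dis_{K_j\times L_j}$ are finite tensor sums in $\Dis_{K_{j+1}\times L_{j+1}}$, where both locally constructed functionals are continuous and both equal $B$ on tensors. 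This is what actually makes your ``independence of choices'' step go through.
\end{enumerate}
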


Such a map $\Ker$ is called the \emph{kernel} of the distribution $u$. For a distribution $u$ on~$X \subset \RR^m$, we denote by $\WF(u) \subset X \times (\RR^m \setminus \{0\})$ its wave front set~\cite[§8.1]{Hormander}. We have the following relation between wave front sets.

\begin{theo}[Hörmander~{\cite[Theorem~8.2.12]{Hormander}}]\label{thm:WF-real}
    Let $u$ be a distribution on $X \times Y$ and $\Ker$ be its kernel. Let $\psi$ be a function of $\Dis(Y)$. Then
    \[ \WF(\Ker\psi) \subset \{(x, \xi) \in X \times (\RR^m \setminus \{0\}) \mid \exists y \in \supp\psi, \; ((x, y), (\xi, 0)) \in \WF(u)\}. \]
\end{theo}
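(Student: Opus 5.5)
We argue by contraposition, directly from the definition of the wave front set through localized Fourier transforms. Fix $x_0 \in X$ and $\xi_0 \neq 0$ such that $((x_0, y), (\xi_0, 0)) \notin \WF(u)$ for every $y \in \supp\psi$. We must produce $\phi \in \Dis(X)$ with $\phi(x_0) \neq 0$ and a conic neighbourhood $\Gamma$ of $\xi_0$ such that $\widehat{\phi\,\Ker\psi}(\xi) = O(\lvert\xi\rvert^{-N})$ for all $N$, uniformly for $\xi \in \Gamma$. By~\eqref{eq:kernel},
\[ \widehat{\phi\,\Ker\psi}(\xi) = \inner{\Ker\psi}{\phi e^{-i\inner{\cdot}{\xi}}} = \inner{u}{(\phi \otimes \psi)\, e^{-i\inner{\cdot}{(\xi,0)}}} = \widehat{(\phi\otimes\psi) u}(\xi, 0), \]
so the problem reduces to showing that the Fourier transform of the localized distribution $(\phi \otimes \psi)u$ decays rapidly in a conic neighbourhood of $(\xi_0, 0)$ in $\RR^{m+n}$. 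Restricting to the slice $\eta = 0$ then gives the claim.

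\textbf{Localization by compactness.} For each $y \in \supp\psi$, the hypothesis gives a neighbourhood $U_y \times V_y$ of $(x_0, y)$ and an open cone $\Gamma_y \ni (\xi_0, 0)$ such that $\widehat{\chi u}$ decays rapidly on $\Gamma_y$ for every $\chi \in \Dis(U_y \times V_y)$. This uses the standard fact that rapid decrease in a cone survives multiplication by a test function supported where the first cutoff is nonzero, up to slightly shrinking the cone (Hörmander, Lemma~8.1.1). Since $\supp\psi$ is compact, we cover it by finitely many $V_{y_1}, \dots, V_{y_k}$. We then set $U = \bigcap_j U_{y_j}$ and $\Gamma = \bigcap_j \Gamma_{y_j}$, which is still an open cone containing $(\xi_0, 0)$. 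Choose a partition of unity $\chi_j \in \Dis(V_{y_j})$ with $\sum_j \chi_j = 1$ on $\supp\psi$, and $\phi \in \Dis(U)$ with $\phi(x_0) \neq 0$. Then
\[ (\phi \otimes \psi)u = \sum_j (\phi \otimes \chi_j\psi)u, \]
and each summand has Fourier transform decaying rapidly on $\Gamma$. The finite sum inherits this decay. Taking the conic neighbourhood $\{\xi : (\xi, 0) \in \Gamma\}$ of $\xi_0$ in $\RR^m \setminus \{0\}$ then concludes the argument.

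\textbf{Main obstacle.} The delicate point is the localization lemma: rapid decay of $\widehat{\chi u}$ in a cone must persist after replacing $\chi$ by $\chi\theta$ with $\theta$ a test function, on a slightly smaller cone. One writes $\widehat{\chi\theta u} = (2\pi)^{-d}\,\hat\theta * \widehat{\chi u}$ and splits the convolution integral into two regions. Where $\eta$ stays in the larger cone, the rapid decay of $\widehat{\chi u}$ applies. Elsewhere, $\lvert\xi - \eta\rvert \gtrsim \lvert\xi\rvert + \lvert\eta\rvert$, so the Schwartz decay of $\hat\theta$ dominates the polynomial growth of $\widehat{\chi u}$, which comes from $u$ having finite order locally. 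I would cite this lemma from Hörmander rather than reprove it. The remaining steps are bookkeeping, ensuring uniformity in the finitely many pieces.
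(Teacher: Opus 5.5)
Your argument is correct. The paper gives no proof of this real statement, since it is quoted from Hörmander, but its proof of the $p$-adic analogue, Theorem~\ref{thm:WF-p}, follows exactly your scheme: contraposition, a finite cover of $\supp\psi$, intersecting the finitely many $x$-neighbourhoods and frequency neighbourhoods, splitting $\phi\otimes\psi$ into pieces each supported in one good neighbourhood, and the identity $\inner{\Ker\psi}{\phi\Psi(\inner{\cdot}{\lambda\xi})} = \inner{u}{(\phi\otimes\psi)\Psi(\inner{\cdot}{\lambda(\xi,0)})}$. The differences are that there disjoint polydiscs with the functions $\cf_{U_y}$ replace your smooth partition of unity, exact vanishing replaces rapid decay, and the $p$-adic definition already quantifies over all test functions supported in $U$, so no analogue of Hörmander's Lemma~8.1.1 is needed.
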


In the $p$-adic setting, we also have a notion of distribution. Since the $p$-adic numbers field $\QQ_p$ is totally disconnected, a distribution is simply a linear form over the space of Schwartz--Bruhat functions (see~\cite[§7.1]{Igusa}). In this context, a notion of kernel can be formulated and, as in the real case, kernels generalize pseudo-differential operators (see~\cite{ZunigaGalindo}). A $p$-adic Schwartz kernel theorem exists as it is presented in the books~\cite[§4.6]{AlbeverioKhrennikovShelkovich} and~\cite[§VI.7]{Vladimirov}, but proofs are incomplete in both cases. In §\ref{sec:p}, we propose to complete these proofs (see Theorem~\ref{thm:kernel-p}). Moreover, as Heifetz does~\cite[Theorem~2.11]{Heifetz} without proof, we give an analogue to Theorem~\ref{thm:WF-real} (see Theorem~\ref{thm:WF-p}).

In §\ref{sec:mot}, we consider the case of the non-Archimedean field $k\ls t$ for a characteristic zero field $k$. In that setting, we need to use motivic integration introduced by Kontsevich, developed successively by Denef--Loeser~\cite{DenefLoeser1999}, Cluckers--Loeser~\cite{CluckersLoeser2008, CluckersLoeser2010} and Hrushovski--Kazhdan~\cite{HrushovskiKazhdan}. We will use Cluckers--Loeser's motivic integration. In this context, Raibaut~\cite{Raibaut} has developed motivic distributions. They satisfy some analogous properties to those verified by $p$-adic distributions. We prove a motivic Schwartz kernel theorem (see Theorem~\ref{thm:kernel-mot}) and a result of the motivic wave front set on the kernel (see Theorem~\ref{thm:WF-mot}), analogous to Theorem~\ref{thm:WF-p}.

\section{Kernels of \texorpdfstring{$p$-adic}{p-adic} distributions}
\label{sec:p}

Let $p$ be a prime number. To start, we recall some basics on $p$-adic distributions based on the books~\cite{AlbeverioKhrennikovShelkovich, Igusa}. We endowed the $p$-adic numbers field $\QQ_p$ with the $p$-adic valuation~$\ord \colon \QQ_p \to \ZZ \cup \{\infty\}$ and we consider its ring of integers $\ZZ_p$. With the valuation, the valued field $\QQ_p$ is a non-Archimedean metric space. We firstly define Schwartz--Bruhat functions and distributions. Let $m$ be a positive integer.

\begin{rema}
    As in~\cite{CluckersHalupczokLoeserRaibaut}, instead of working on the field $\QQ_p$, we can more generally work on a non-Archimedean local field, namely a finite field extension of $\QQ_p$ or a field of Laurent series with coefficients in a finite field. But for the convenience of the reader, in all the results and their proof, we will work on the field $\QQ_p$.
\end{rema}

\subsection{Distributions and Schwartz kernel theorem}

\begin{defi}
    A complex-valued function $\varphi$ on an open set $X$ of $\QQ_p^m$ is a \emph{Schwartz--Bruhat function} if it is locally constant (for the $p$-adic topology) and its support
    \[ \supp\varphi \coloneq \overline{\{x \in X \mid \varphi(x) \ne 0\}} \]
    is compact.
\end{defi}

We denote by $\Sch(X)$ the complex vector space of Schwartz--Bruhat functions on $X$%
    \footnote{The space $\Sch(X)$ can be equipped with a direct limit topology (see~\cite[pp. 98--99]{Igusa}) such that the algebraic dual of the space $\Sch(X)$ and its topological dual coincide. That is why, in Definition~\ref{def:distribution}, no continuity assumption is made. One can remarks that this topology corresponds analogously to the real case: the space $\Dis(X)$ for an open set $X$ of $\RR^m$ is a Fréchet space and its topology can be seen as a direct limit topology.}
and by $\class^\infty(X)$ the complex vector space of locally constant functions on $X$.

A \emph{polydisc} is a subset of $\QQ_p^m$ of the form $\Ball(x_0, \alpha) \coloneq x_0 + p^\alpha\ZZ_p^m$ for some vector~$x_0$ of $\QQ_p^m$ and some integer $\alpha$. Remark that a polydisc is not a singleton and hence open. Characteristic functions of bounded subsets of $\QQ_p^m$, like polydiscs, are examples of Schwartz--Bruhat functions on $\QQ_p^m$. We have the following representation theorem of Schwartz--Bruhat functions.

\begin{theo}\label{thm:dec-phi}
    Let $X$ be an open set of $\QQ_p^m$ and $\varphi$ be a Schwartz--Bruhat function on~$X$. Then there exist a positive integer $N$, complex numbers $c_i$ and polydiscs $B_i$ in~$X$ with~$1 \leqslant i \leqslant N$ such that
    \begin{equation}\label{eq:dec-phi}
        \varphi = \sum_{i = 1}^N c_i\cf_{B_i}.
    \end{equation}
    Moreover, the polydiscs $B_i$ with $1 \leqslant i \leqslant N$ can be chosen to be disjoint.
\end{theo}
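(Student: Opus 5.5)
The plan is to combine compactness of the support with local constancy, and then use the ultrametric structure of polydiscs: two polydiscs are either disjoint or one contains the other. Set $K \coloneq \supp\varphi$, which is compact by hypothesis and contained in the open set $X$.

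\textbf{Step 1: a uniform radius.} For each $x \in K$, local constancy gives an integer $\alpha_x$ with $\varphi$ constant on $\Ball(x, \alpha_x)$. Shrinking the ball if necessary (increasing $\alpha_x$), we may also assume $\Ball(x, \alpha_x) \subset X$, since $X$ is open. The balls $\Ball(x, \alpha_x)$ cover $K$, so by compactness finitely many of them, centred at $x_1, \dots, x_r$, already cover $K$. Put
\[ \alpha \coloneq \max_{1 \leqslant j \leqslant r} \alpha_{x_j}. \]

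\textbf{Step 2: a disjoint cover at radius $\alpha$.} Any polydisc $\Ball(y, \alpha)$ with $y \in K$ lies in some $\Ball(x_j, \alpha_{x_j})$. This uses the non-Archimedean fact that each point of a polydisc is a centre of it, so that $\Ball(y, \alpha) \subset \Ball(y, \alpha_{x_j}) = \Ball(x_j, \alpha_{x_j})$. Hence each such polydisc lies in $X$ and $\varphi$ is constant on it.

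The polydiscs $\Ball(y, \alpha)$ are exactly the cosets of the subgroup $p^\alpha\ZZ_p^m$ in $\QQ_p^m$, so any two of them are equal or disjoint. The cosets meeting $K$ cover $K$; since they are open and disjoint and $K$ is compact, only finitely many of them meet $K$. Call these $B_1, \dots, B_N$.

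\textbf{Step 3: the decomposition.} Let $c_i$ be the constant value of $\varphi$ on $B_i$. Outside $\bigcup_i B_i$ we are outside $K$, so $\varphi$ vanishes there. This gives equality~\eqref{eq:dec-phi} with pairwise disjoint polydiscs $B_i \subset X$. If $\varphi = 0$, take $N = 1$, any polydisc in $X$, and $c_1 = 0$.

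The only real obstacle is making the radii uniform so that the covering balls are cosets of a single subgroup. Both the disjointness and the containment in $X$ then follow directly from the ultrametric "every point is a centre" property.
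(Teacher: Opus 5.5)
Your proof is correct. It differs from the paper's only in how disjointness is obtained.

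The paper takes the finite subcover by polydiscs $B_{x_i}$ of varying radii on which $\varphi$ is constant, and then prunes it. Whenever two of these polydiscs meet, one contains the other by the ultrametric property, and the smaller one is deleted. This loses nothing, since $\varphi$ is constant on the larger one and the remaining polydiscs still cover $\supp\varphi$. The result is a disjoint family with $c_i = \varphi(x_i)$.

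You instead refine everything to the common finest radius $\alpha = \max_j \alpha_{x_j}$. You then use that the polydiscs $\Ball(y, \alpha)$ are the cosets of $p^\alpha\ZZ_p^m$, so disjointness comes for free. Finiteness follows from compactness, because each coset meeting $K$ is the only member of the cover containing its points of $K$.

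The paper's route keeps the polydiscs as large as possible, giving fewer terms with mixed radii. Yours produces a decomposition at a single radius. That single-radius form is exactly what the paper later builds by hand in the proof of Theorem~\ref{thm:kernel-p}, where two decompositions are refined to the common radius $\gamma + 1$ in order to compare them.

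You also handle the case $\varphi = 0$ explicitly. This is needed for the statement's requirement that $N$ be positive, and the paper's proof passes over it.
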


\begin{proof}
    Since the function $\varphi$ is locally constant, for each point $x$ of $X$, there exists a polydisc $B_x$ in $X$ with center $x$ such that the function $\varphi$ is constant on $B_x$. Then we can write
    \[ \supp\varphi \subseteq \bigcup_{x \in \supp\varphi} B_x. \]
    Since the set $\supp\varphi$ is compact, we can find a positive integer $N$ and points $x_i$ of~$\supp\varphi$ with $1 \leqslant i \leqslant N$ such that
    \begin{equation}\label{eq:dec-supp}
        \supp\varphi \subseteq \bigcup_{i = 1}^N B_{x_i}.
    \end{equation}
    Let us show that we can assume that the polydiscs $B_i \coloneq B_{x_i}$ with $1 \leqslant i \leqslant N$ are disjoints. Let $i$ and $j$ be two indexes between $1$ and $N$. If $B_i \cap B_j = \emptyset$, then there is nothing to do. We assume now that $B_i \cap B_j \ne \emptyset$. Since the field $\QQ_p$ is non-Archimedean, one of the polydisc $B_i$ or $B_j$ is included on the other and we delete the smallest. Repeating this procedure, we can assume that the polydiscs $B_i \coloneq B_{x_i}$ with $1 \leqslant i \leqslant N$ are disjoints.

    By setting $c_i \coloneq \varphi(x_i)$ for $1 \leqslant i \leqslant N$, by the assumption made in the last paragraph, equality~\eqref{eq:dec-phi} follows from equation~\eqref{eq:dec-supp}.
\end{proof}

We define what a $p$-adic distribution is. Let $X$ be an open set of $\QQ_p^m$.

\begin{defi}\label{def:distribution}
    A \emph{distribution} on the open set $X$ is a linear form on the complex vector space $\Sch(X)$.
\end{defi}

We denote by $\Sch'(X)$ the complex vector space of distributions on $X$. From now on, we consider the Haar measure on $\QQ_p$ such that the volume of $\ZZ_p$ is $1$.

\begin{defi}
    For a locally constant function $f$ on $X$, we consider the distribution~$T_f$ on $X$ defined by the equality
    \[ \inner{T_f}{\varphi} = \int_X f(x)\varphi(x) \dd x \]
    for all Schwartz--Bruhat functions $\varphi$ on $X$.
\end{defi}

\begin{prop}\label{prop:injection-Cinf}
    The $\CC$-linear map
    \[ \fonc{T}{\class^\infty(X)}{\Sch'(X),}{f}{T_f} \]
    is injective.
\end{prop}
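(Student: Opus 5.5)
Since $T$ is $\CC$-linear, injectivity amounts to showing that its kernel is trivial. So let $f \in \class^\infty(X)$ with $T_f = 0$, that is, $\int_X f(x)\varphi(x)\dd x = 0$ for every $\varphi \in \Sch(X)$. We must show $f(x_0) = 0$ for every point $x_0$ of $X$. The plan is to test $T_f$ against the characteristic function of a small polydisc around $x_0$ on which $f$ is constant.

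\textbf{Step 1: a good polydisc.} Fix $x_0 \in X$. Because $f$ is locally constant, there is a neighbourhood of $x_0$ on which $f$ equals $f(x_0)$. Because $X$ is open, there is also a neighbourhood of $x_0$ contained in $X$. Polydiscs $\Ball(x_0, \alpha) = x_0 + p^\alpha\ZZ_p^m$ form a basis of neighbourhoods of $x_0$, and they shrink as $\alpha$ grows. Hence we can pick $\alpha$ large enough that $B \coloneq \Ball(x_0, \alpha)$ lies in $X$ and $f$ is constant equal to $f(x_0)$ on $B$.

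\textbf{Step 2: test against $\cf_B$.} The function $\cf_B$ is locally constant, and its support $B$ is compact, being a translate of the compact set $p^\alpha\ZZ_p^m$. So $\cf_B$ is a Schwartz--Bruhat function on $X$, as already remarked before Theorem~\ref{thm:dec-phi}. Therefore
\[ 0 = \inner{T_f}{\cf_B} = \int_B f(x)\dd x = f(x_0)\,\mathrm{vol}(B). \]
The Haar measure is translation invariant and normalised by $\mathrm{vol}(\ZZ_p) = 1$, so $\mathrm{vol}(B) = p^{-m\alpha}$, which is nonzero. Hence $f(x_0) = 0$. Since $x_0$ was arbitrary, $f = 0$, and $T$ is injective.

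\textbf{Main obstacle.} There is no real obstacle. The only point needing care is that the integral defining $T_f$ makes sense: $f\varphi$ is locally constant with compact support, so by Theorem~\ref{thm:dec-phi} it is a finite combination of characteristic functions of polydiscs, and its integral is a finite sum. This also justifies the computation of $\int_B f(x)\dd x$ in Step 2.
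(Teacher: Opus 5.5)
Your proof is correct and uses the same idea as the paper: test $T_f$ against the characteristic function of a polydisc on which $f$ is constant, and use that polydiscs have nonzero Haar volume. The paper instead decomposes $f\cf_K$ on a compact open $K$ via Theorem~\ref{thm:dec-phi} and kills each coefficient $c_i$, which tacitly needs the $B_i$ to be disjoint; your pointwise argument with one small polydisc around each $x_0$ is slightly more direct and avoids that step.
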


\begin{proof}
    Let $f$ be a function in the kernel of the $\CC$-linear map $T$. In other words, we have
    \begin{equation}\label{eq:kernel-injection}
        \forall \varphi \in \Sch(X), \qquad \int_X f(x)\varphi(x) \dd x = 0.
    \end{equation}
    Let $K$ be a nonempty compact open set of $X$ (a polydisc typically). We can decompose the function $f\cf_K$ using Theorem~\ref{thm:dec-phi}: we write it as
    \[ f\cf_K = \sum_{i = 1}^N c_i\cf_{B_i} \]
    for some nonnegative integer $N$, some complex numbers $c_i$ and some polydiscs $B_i$ of~$X$ with $1 \leqslant i \leqslant N$. For each index $i$ in~$\{1, \dots, N\}$, we can apply relation~\eqref{eq:kernel-injection} with~$\varphi = \cf_{B_i}\cf_K$ which leads to $c_i = 0$. Thus, the function $f$ is zero on $K$. We deduce that the function $f$ is zero on $X$.
\end{proof}

\begin{defi}
    A distribution $u$ on $X$ is \emph{represented by a $\class^\infty$-function} if there exists a locally constant function $f \colon X \to \CC$ such that $u = T_f$.
\end{defi}

For a locally constant function $f \colon X \to \CC$, we denote again by $f$ the distribution~$T_f$. This notation is legitimate thanks to Proposition~\ref{prop:injection-Cinf}.

\bigbreak

Let $X$ be an open set of $\QQ_p^m$ and $Y$ be an open set of $\QQ_p^n$. As in the real setting, for two Schwartz--Bruhat functions $\phi$ on $X$ and $\psi$ on $Y$, we consider the Schwartz--Bruhat function on $X \times Y$
\[ \fonc{\phi \otimes \psi}{X \times Y}{\CC,}{(x, y)}{\phi(x)\psi(y).} \]
In this way, we obtain a bilinear map
\[ - \otimes - \colon \Sch(X) \times \Sch(Y) \to \Sch(X \times Y) \]
which can be factorised into a $\CC$-linear map
\begin{equation}\label{eq:iso-Sch}
    - \otimes - \colon \Sch(X) \otimes_\CC \Sch(Y) \to \Sch(X \times Y)
\end{equation}
We prove in Lemma~\ref{lem:surj} that this latter map is surjective. We can also prove that this map is injective ---~it is a general fact about tensor products of complex valued functions.

\begin{lemm}\label{lem:surj}
    The map~\eqref{eq:iso-Sch} is surjective.
\end{lemm}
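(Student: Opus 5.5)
Surjectivity will follow from Theorem~\ref{thm:dec-phi} applied on the product space. Let $\varphi \in \Sch(X \times Y)$. Since $X \times Y$ is an open set of $\QQ_p^{m+n}$, Theorem~\ref{thm:dec-phi} lets us write
\[ \varphi = \sum_{i = 1}^N c_i\cf_{B_i} \]
with complex numbers $c_i$ and polydiscs $B_i \subseteq X \times Y$ of $\QQ_p^{m+n}$. Because $- \otimes -$ is linear, it then suffices to show that each $\cf_{B_i}$ lies in the image of the map~\eqref{eq:iso-Sch}.

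The key observation is that a polydisc of $\QQ_p^{m+n}$ is a product of polydiscs. Write $B = z_0 + p^\alpha\ZZ_p^{m+n}$ with $z_0 = (x_0, y_0)$, $x_0 \in \QQ_p^m$ and $y_0 \in \QQ_p^n$. Then
\[ B = (x_0 + p^\alpha\ZZ_p^m) \times (y_0 + p^\alpha\ZZ_p^n) = B' \times B'', \]
and consequently $\cf_B = \cf_{B'} \otimes \cf_{B''}$, since $\cf_{B'\times B''}(x,y) = \cf_{B'}(x)\cf_{B''}(y)$.

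It remains to check that $\cf_{B'}$ belongs to $\Sch(X)$ and $\cf_{B''}$ to $\Sch(Y)$. Since $B \subseteq X \times Y$, its projections satisfy $B' \subseteq X$ and $B'' \subseteq Y$. Polydiscs are compact and open, so their characteristic functions are locally constant with compact support. Hence $\varphi = \sum_i c_i\, \cf_{B_i'} \otimes \cf_{B_i''}$ is the image of $\sum_i c_i\, \cf_{B_i'} \otimes \cf_{B_i''} \in \Sch(X) \otimes_\CC \Sch(Y)$.

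There is no real obstacle here. The only point needing care is that the polydiscs produced by Theorem~\ref{thm:dec-phi} are contained in $X \times Y$, which that theorem guarantees, and that they use the same radius in every coordinate, which is built into the definition of a polydisc, so the product decomposition is immediate.
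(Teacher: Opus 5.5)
Your proof is correct and is essentially the paper's argument: decompose $\varphi$ on $X \times Y$ with Theorem~\ref{thm:dec-phi}, then split each polydisc $B_i$ as $C_i \times D_i$ with $C_i$ a polydisc in $X$ and $D_i$ a polydisc in $Y$. You also spell out two points the paper leaves implicit: why a polydisc of $\QQ_p^{m+n}$ is a product of polydiscs (same radius in every coordinate), and why the factors lie in $X$ and $Y$ and give Schwartz--Bruhat functions.
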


\begin{proof}
    Let $\varphi$ be a Schwartz--Bruhat function on $X \times Y$. According to Theorem~\ref{thm:dec-phi}, there exist a positive integer $N$, complex numbers $c_i$ and disjoints polydiscs $B_i$ in~$X \times Y$ with~$1 \leqslant i \leqslant N$ such that
    \begin{equation}\label{eq:dec-phi-2}
        \varphi = \sum_{i = 1}^N c_i\cf_{B_i}.
    \end{equation}
    Each polydisc $B_i$ with $1 \leqslant i \leqslant N$ can be written as $B_i = C_i \times D_i$ for two polydiscs $C_i$ in $X$ and $D_i$ in $Y$. Thus, equality~\eqref{eq:dec-phi-2} can be rewritten as
    \[ \varphi = \sum_{i = 1}^N c_i\cf_{C_i} \otimes \cf_{D_i} \]
    which concludes the lemma.
\end{proof}

We now prove the following $p$-adic Schwartz kernel theorem.

\begin{theo}\label{thm:kernel-p}
    Let $X$ be an open set of $\QQ_p^m$ and $Y$ be an open set of $\QQ_p^n$. For each distribution $u$ on $X \times Y$, there exists a unique $\CC$-linear map
    \[ \Ker \colon \Sch(Y) \to \Sch'(X) \]
    such that
    \begin{equation}\label{eq:kernel-p}
        \inner{u}{\phi \otimes \psi} = \inner{\Ker\psi}{\phi}, \qquad \forall \phi \in \Sch(X), \; \forall \psi \in \Sch(Y).
    \end{equation}
    Conversely, for each $\CC$-linear map $\Ker \colon \Sch(Y) \to \Sch'(X)$, there exists a unique distribution $u$ on $X \times Y$ such that equality~\eqref{eq:kernel-p} holds.
\end{theo}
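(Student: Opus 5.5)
The plan is to argue purely algebraically, since in the $p$-adic setting distributions are plain linear forms and no continuity has to be checked. The only analytic input is Lemma~\ref{lem:surj}, which says the map~\eqref{eq:iso-Sch} is surjective. Injectivity of~\eqref{eq:iso-Sch} was stated as a general fact, so I will treat~\eqref{eq:iso-Sch} as an isomorphism $\Sch(X)\otimes_\CC\Sch(Y)\cong\Sch(X\times Y)$. The theorem then becomes an instance of the canonical bijection
\[ \Hom_\CC(\Sch(X)\otimes_\CC\Sch(Y),\CC)\;\cong\;\{\text{bilinear forms on } \Sch(X)\times\Sch(Y)\}\;\cong\;\Hom_\CC(\Sch(Y),\Sch'(X)). \]

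\emph{Direct part.} Given $u\in\Sch'(X\times Y)$ and $\psi\in\Sch(Y)$, I define $\Ker\psi$ as the map $\phi\mapsto\inner{u}{\phi\otimes\psi}$. This map is linear in $\phi$, because $\phi\otimes\psi$ is bilinear in $(\phi,\psi)$ and $u$ is linear. So $\Ker\psi\in\Sch'(X)$, with no continuity to verify by Definition~\ref{def:distribution}. By the same bilinearity, $\psi\mapsto\Ker\psi$ is $\CC$-linear. Equality~\eqref{eq:kernel-p} then holds by construction. Uniqueness is immediate: equality~\eqref{eq:kernel-p} prescribes $\inner{\Ker\psi}{\phi}$ for every $\phi$, so it determines each distribution $\Ker\psi$.

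\emph{Converse.} Given a linear map $\Ker$, the map $(\phi,\psi)\mapsto\inner{\Ker\psi}{\phi}$ is bilinear. By the universal property of the tensor product, it factors through a linear form $\tilde u$ on $\Sch(X)\otimes_\CC\Sch(Y)$. I then need $u$ on $\Sch(X\times Y)$ with $u\circ(-\otimes-)=\tilde u$.

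\textbf{Uniqueness} of $u$ needs only surjectivity. Any $\varphi\in\Sch(X\times Y)$ can be written as $\sum c_i\cf_{C_i}\otimes\cf_{D_i}$ by Lemma~\ref{lem:surj}, so $\inner u\varphi$ is forced to equal $\sum c_i\inner{\Ker\cf_{D_i}}{\cf_{C_i}}$.

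\textbf{Existence} needs the defining formula to be well defined. This means: if $\sum_j \phi_j\otimes\psi_j=0$ as a function on $X\times Y$, then $\sum_j\inner{\Ker\psi_j}{\phi_j}=0$. This is exactly the injectivity of~\eqref{eq:iso-Sch}, and I expect it to be the main obstacle, since the paper only asserts it.

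To prove this injectivity, I would proceed as follows. Take an element $\sum_j\phi_j\otimes\psi_j$ of the algebraic tensor product. Choose a basis of the span of the $\psi_j$ and rewrite the element as $\sum_k\phi'_k\otimes\psi'_k$ with the $\psi'_k$ linearly independent functions on $Y$. Linear independence lets me find points $y_1,\dots,y_r$ with $\det(\psi'_k(y_l))\ne0$; this is a standard induction on $r$. If $\sum_k\phi'_k(x)\psi'_k(y)=0$ for all $x,y$, then evaluating at each $y_l$ gives $\sum_k\phi'_k(x)\psi'_k(y_l)=0$ for every $x$. Inverting the matrix forces every $\phi'_k$ to vanish, so the tensor is $0$. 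With the isomorphism established, I set $u\coloneq\tilde u\circ(-\otimes-)^{-1}$. This is linear, hence a distribution by Definition~\ref{def:distribution}, and it satisfies~\eqref{eq:kernel-p} by construction.
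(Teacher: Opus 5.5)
Your proof is correct, but its key step differs from the paper's. The paper never uses injectivity of \eqref{eq:iso-Sch}. Instead, it defines $\inner u\varphi$ by \eqref{eq:def-u-p} from a decomposition of $\varphi$ into products of characteristic functions of polydiscs. It proves independence of the choice by refining two decompositions down to a common grid of polydiscs of radius $\gamma+1$ and matching the cells, and then simply asserts linearity. You reduce everything to the single $p$-adic input, the surjectivity of Lemma~\ref{lem:surj}. You then prove the injectivity that the paper only states, by pure linear algebra: pass to linearly independent $\psi'_k$, choose points with $\det(\psi'_k(y_l))\neq0$, and invert. After that, $u=\tilde u\circ(-\otimes-)^{-1}$ is automatically well defined and linear; this is the mechanism the paper records only afterwards, in Remark~\ref{rema:kernel-locally-constant}. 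Your route is more robust, because it handles arbitrary, possibly overlapping, representations $\sum_j\phi_j\otimes\psi_j$. By contrast, the paper's cell-matching step identifies the coefficients $c_{k,i}$ with values of $\varphi$, so it needs the pieces of each decomposition to be disjoint. That leaves the linearity of $u$, where adding two decompositions produces overlaps, as an unwritten check. In return, the paper's route is explicit and elementary: it works only with characteristic functions of polydiscs and never invokes the abstract tensor product.
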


\begin{defi}
    Such a map $\Ker$ is called the \emph{kernel} of the distribution $u$.
\end{defi}

\begin{exem}\label{exem:locally-integrable-kernel}
    Let $f$ be a locally constant function on $X \times Y$. It defines a distribution on $X \times Y$. Let $\Ker$ be its kernel and $\psi$ be a Schwartz--Bruhat function on~$Y$. We can check that, by Theorem~\ref{thm:dec-phi}, the function
    \[ \fnc{X}{\CC,}{x}{\int_Y f(x, y)\psi(y) \dd y} \]
    is locally constant and, by Fubini's theorem, it represents the distribution $\Ker\psi$.
\end{exem}

\begin{exem}
    Let $X$ be an open set of $\QQ_p^m$. By Proposition~\ref{prop:injection-Cinf}, we have a $\CC$-linear injection $\class^\infty(X) \to \Sch'(X)$. If we pre-compose it with the inclusion~$\Sch(X) \to \class^\infty(X)$, we obtain a $\CC$-linear map
    \[ \fnc{\Sch(X)}{\Sch'(X),}{\psi}{T_\psi.} \]
    It is the kernel associated to the distribution
    \[ u_\Delta \colon \varphi \longmapsto \int_X \varphi(x, x) \dd x \]
    on the product $X \times X$, induced by the diagonal $\Delta \coloneq \{(x, y) \in X \times X \mid x = y\}$. Indeed, for all Schwartz--Bruhat functions~$\phi$ and $\psi$ on $X$, we have
    \begin{align*}
        \inner{T_\psi}{\phi} &= \int_X \psi(x)\phi(x) \dd x \\
        &= \int_X \phi \otimes \psi(x, x) \dd x \\
        &= \inner{u_\Delta}{\phi \otimes \psi}.
    \end{align*}
\end{exem}

\begin{proof}[Proof of Theorem~\ref{thm:kernel-p}]
    Let $u$ be a distribution on $X \times Y$. Equality~\eqref{eq:kernel-p} defines a unique such map~$\Ker$ and it is $\CC$-linear because of the bilinearity of the tensor product and the linearity of the distribution~$u$.

    \sepproof

    Let us prove the reciprocal. Let $\Ker \colon \Sch(Y) \to \Sch'(X)$ be a $\CC$-linear map.

    \subproof{Existence}
    Let $\varphi$ be a Schwartz--Bruhat function on $X \times Y$. According to Theorem~\ref{thm:dec-phi} and its proof, there exist a positive integer $N$, complex numbers $c_i$ with~$1 \leqslant i \leqslant N$, disjoints polydiscs $C_i$ in $X$ with~$1 \leqslant i \leqslant N$ and disjoints polydiscs~$D_i$ in $Y$ with~$1 \leqslant i \leqslant N$ such that
    \begin{equation}\label{eq:dec-phi-3}
        \varphi = \sum_{i = 1}^N c_i\cf_{C_i} \otimes \cf_{D_i}.
    \end{equation}
    We set now
    \begin{equation}\label{eq:def-u-p}
        \inner u\varphi \coloneq \sum_{i = 1}^N c_i\inner{\Ker\cf_{D_i}}{\cf_{C_i}}.
    \end{equation}
    With the next paragraph, this definition does not depend of the choice in decomposition~\eqref{eq:dec-phi-3}. Moreover, we can easily verify that the map $u \colon \Sch(X) \to \CC$ is $\CC$-linear. Consequently, we obtain a distribution $u$ on $X \times Y$ and equality~\eqref{eq:kernel-p} holds.

    We just have to prove that definition~\eqref{eq:def-u-p} does not depend of the choice of objects involved. We consider two decompositions of the function $\varphi$: for $k = 1$ or~$k = 2$, we write it as
    \begin{equation}\label{eq:dec-phi-k}
        \varphi = \sum_{i = 1}^{N_k} c_{k, i}\cf_{\Ball(a_{k, i}, \alpha_{k, i})} \otimes \cf_{\Ball(b_{k, i}, \beta_{k, i})}
    \end{equation}
    for some nonnegative integer $N_k$%
        \footnote{With the usual convention, if $N_k = 0$, then the sum is zero.}, complex numbers $c_{k, i}$, vectors $a_{k, i}$ of $\QQ_p^m$ and vectors~$b_{k, i}$ of $\QQ_p^n$ with $1 \leqslant i \leqslant N_k$. We want to prove the equality
    \begin{equation}\label{eq:independance}
        \sum_{i = 1}^{N_1} c_{1, i}\inner{\Ker\cf_{\Ball(b_{1, i}, \beta_{1, i})}}{\cf_{\Ball(a_{1, i}, \alpha_{1, i})}} = \sum_{i = 1}^{N_2} c_{2, i}\inner{\Ker\cf_{\Ball(b_{2, i}, \beta_{2, i})}}{\cf_{\Ball(a_{2, i}, \alpha_{2, i})}}.
    \end{equation}
    We can assume that complex numbers $c_{k, i}$ with $k = 1, 2$ and $1 \leqslant i \leqslant N_k$ are all nonzero ---~otherwise, we just have to remove the corresponding terms. We set
    \[ \gamma \coloneq \max_{\substack{k = 1, 2 \\ 1 \leqslant i \leqslant N_k}} (\alpha_{i, k}, \beta_{i, k}). \]
    Let $k = 1$ or $k = 2$. Decomposition~\eqref{eq:dec-phi-k} can be refined as
    \begin{multline}\label{eq:dec-phi-k-refined}
        \varphi = \\
        \sum_{i = 1}^{N_k} c_{k, i} \sum_{\substack{x_{\alpha_{k, i}}, \dots, x_\gamma \in \FF_p^m \\ y_{\beta_{k, i}}, \dots, y_\gamma \in \FF_p^n}} \cf_{\Ball(a_{k, i} + p^{\alpha_{k, i}}x_{\alpha_{k, i}} + \dots + p^\gamma x_\gamma, \gamma + 1)} \otimes \cf_{\Ball(b_{k, i} + p^{\beta_{k, i}}y_{\beta_{k, i}} + \dots + p^\gamma y_\gamma, \gamma + 1)}
    \end{multline}
    where the notation $\FF_p$ stands for the residue field of the valued field $\QQ_p$.
    Thanks to linearity, we can write
    \begin{multline}\label{eq:independance-refined}
        \sum_{i = 1}^{N_k} c_{k, i}\inner{\Ker\cf_{\Ball(b_{k, i}, \beta_{k, i})}}{\cf_{\Ball(a_{k, i}, \alpha_{k, i})}} \\
        = \sum_{i = 1}^{N_k} c_{k, i} \sum_{\substack{x_{\alpha_{k, i}}, \dots, x_\gamma \in \FF_p^m \\ y_{\beta_{k, i}}, \dots, y_\gamma \in \FF_p^n}} \inner{\Ker\cf_{\Ball(b_{k, i} + p^{\beta_{k, i}}y_{\beta_{k, i}} + \dots + p^\gamma y_\gamma, \gamma + 1)}}{ \\
        \cf_{\Ball(a_{k, i} + p^{\alpha_{k, i}}x_{\alpha_{k, i}} + \dots + p^\gamma x_\gamma, \gamma + 1)}}.
    \end{multline}
    To prove equality~\eqref{eq:independance}, thanks to last equality~\eqref{eq:independance-refined}, it is enough to prove that decompositions~\eqref{eq:dec-phi-k-refined} for $k = 1$ and $k = 2$ are the same up to a permutation. We remark that polydiscs
    \[ \Ball(a_{k, i} + p^{\alpha_{k, i}}x_{\alpha_{k, i}} + \dots + p^\gamma x_\gamma, \gamma + 1) \]
    with $1 \leqslant i \leqslant N_k$ and $x_{\alpha_{k, i}}, \dots, x_\gamma \in \FF_p^n$ are disjoints and polydiscs
    \[ \Ball(b_{k, i} + p^{\beta_{k, i}}y_{\beta_{k, i}} + \dots + p^\gamma y_\gamma, \gamma + 1) \]
    with $1 \leqslant i \leqslant N_k$ and $y_{\alpha_{k, i}}, \dots, y_\gamma \in \FF_p^n$ are disjoints. Let $i$ be an index between $1$ and~$N_1$ and $x_{\alpha_{k, i}}$, \dots, $x_\gamma$ vectors of $\FF_p^m$. We can find an index $j_i$ between $1$ and $N_2$ and vectors $y_{\alpha_{k, i}}$, \dots, $y_\gamma$ of $\FF_p^n$ such that
    \[ a_{1, i} + p^{\alpha_{1, i}}x_{\alpha_{1, i}} + \dots + p^\gamma x_\gamma \in \Ball(a_{2, j_i} + p^{\alpha_{2, j_i}}x_{\alpha_{2, j_i}} + \dots + p^\gamma x_\gamma, \gamma + 1) \]
    and
    \[ b_{1, i} + p^{\beta_{1, i}}y_{\beta_{1, i}} + \dots + p^\gamma y_\gamma \in \Ball(b_{2, j_i} + p^{\beta_{2, j_i}}y_{\beta_{2, j_i}} + \dots + p^\gamma y_\gamma, \gamma + 1). \]
    Otherwise, we would have
    \[ \varphi(a_{1, i} + p^{\alpha_{1, i}}x_{\alpha_{1, i}} + \dots + p^\gamma x_\gamma, b_{1, i} + p^{\beta_{1, i}}y_{\beta_{1, i}} + \dots + p^\gamma y_\gamma) = 0 \]
    and then $c_i = 0$ which is impossible. We deduce that $c_{1, i} = c_{2, j_i}$ and then
    \[ \Ball(a_{1, i} + p^{\alpha_{1, i}}x_{\alpha_{1, i}} + \dots + p^\gamma x_\gamma, \gamma + 1) = \Ball(a_{2, j_i} + p^{\alpha_{2, j_i}}x_{\alpha_{2, j_i}} + \dots + p^\gamma x_\gamma, \gamma + 1) \]
    and
    \[ \Ball(b_{1, i} + p^{\beta_{1, i}}y_{\beta_{1, i}} + \dots + p^\gamma y_\gamma, \gamma + 1) = \Ball(b_{2, j_i} + p^{\beta_{2, j_i}}y_{\beta_{2, j_i}} + \dots + p^\gamma y_\gamma, \gamma + 1). \]
    Finally, we obtain the equality
    \begin{multline*}
        c_{1, i}\cf_{\Ball(a_{1, i} + p^{\alpha_{1, i}}x_{\alpha_{1, i}} + \dots + p^\gamma x_\gamma, \gamma + 1)} \otimes \cf_{\Ball(b_{1, i} + p^{\beta_{1, i}}y_{\beta_{1, i}} + \dots + p^\gamma y_\gamma, \gamma + 1)} \\
        = c_{2, j_i}\cf_{\Ball(a_{1, j_i} + p^{\alpha_{1, j_i}}x_{\alpha_{1, j_i}} + \dots + p^\gamma x_\gamma, \gamma + 1)} \otimes \cf_{\Ball(b_{1, j_i} + p^{\beta_{1, j_i}}y_{\beta_{1, j_i}} + \dots + p^\gamma y_\gamma, \gamma + 1)}.
    \end{multline*}
    We can now delete a term in decompositions~\eqref{eq:dec-phi-k-refined}. Repeating this procedure, we deduce that decompositions~\eqref{eq:dec-phi-k-refined} are the same up to a permutation. This proves equality~\eqref{eq:independance} which concludes the existence.

    \subproof{Uniqueness}
    Let $u$ and $u'$ be two distributions on $X \times Y$ such that
    \begin{align}
        \inner{u}{\phi \otimes \psi} &= \inner{\Ker\psi}{\phi} \qquad \text{and} \label{eq:noyau-mot-u-p} \\
        \inner{u'}{\phi \otimes \psi} &= \inner{\Ker\psi}{\phi} \label{eq:noyau-mot-u'-p}
    \end{align}
    for all Schwartz--Bruhat functions $\phi$ on $X$ and $\psi$ on $Y$. Let $\varphi$ be a Schwartz--Bruhat function on $X \times Y$. We write it as decomposition~\eqref{eq:dec-phi-3}. With equalities~\eqref{eq:noyau-mot-u-p} and~\eqref{eq:noyau-mot-u'-p}, we deduce that
    \[ \inner u\varphi = \sum_{i = 1}^N c_i\inner{u}{\cf_{C_i} \otimes \cf_{D_i}} = \sum_{i = 1}^N c_i\inner{\Ker\cf_{D_i}}{\cf_{C_i}} = \sum_{i = 1}^N c_i\inner{u'}{\cf_{C_i} \otimes \cf_{D_i}} = \inner{u'}{\varphi}. \]
    It implies that $u = u'$.
\end{proof}

In other words, Theorem~\ref{thm:kernel-p} can be slightly refined as follows.

\begin{coro}
    Let $X$ be an open set of $\QQ_p^m$ and $Y$ be an open set of $\QQ_p^n$. Then there exists a canonical $\CC$-linear isomorphism
    \begin{equation}\label{eq:iso-Hom}
        \Sch'(X \times Y) \cong \Hom_\CC(\Sch(Y), \Sch'(X))
    \end{equation}
    where the notation $\Hom_\CC(\Sch(Y), \Sch'(X))$ stands for the complex vector space of linear maps from $\Sch(Y)$ to $\Sch'(X)$.
\end{coro}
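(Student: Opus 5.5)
The plan is to make explicit the map implicitly built in Theorem~\ref{thm:kernel-p} and check that it is a linear bijection. Define
\[ \Phi \colon \Sch'(X \times Y) \to \Hom_\CC(\Sch(Y), \Sch'(X)), \qquad u \longmapsto \Ker_u, \]
where $\Ker_u$ is the unique kernel of $u$ given by the first part of Theorem~\ref{thm:kernel-p}. Concretely, $\inner{\Ker_u\psi}{\phi} = \inner{u}{\phi \otimes \psi}$ for all $\phi \in \Sch(X)$ and $\psi \in \Sch(Y)$. This formula involves no choices, so the map is canonical.

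First, I check that $\Phi$ is $\CC$-linear. For distributions $u, v$ and a scalar $\lambda \in \CC$, the map $\Ker_u + \lambda\Ker_v$ is $\CC$-linear from $\Sch(Y)$ to $\Sch'(X)$. It also satisfies equality~\eqref{eq:kernel-p} for the distribution $u + \lambda v$, since evaluation $\inner{\cdot}{\phi \otimes \psi}$ is linear in the distribution. By the uniqueness part of Theorem~\ref{thm:kernel-p}, this gives $\Ker_{u + \lambda v} = \Ker_u + \lambda\Ker_v$.

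Next, surjectivity and injectivity are exactly the converse part of Theorem~\ref{thm:kernel-p}.
\begin{itemize}
\item \emph{Surjectivity.} Given any $\Ker \in \Hom_\CC(\Sch(Y), \Sch'(X))$, the existence statement provides $u$ with $\Phi(u) = \Ker$.
\item \emph{Injectivity.} If $\Phi(u) = \Phi(u')$, then $u$ and $u'$ both satisfy~\eqref{eq:kernel-p} with the same $\Ker$. The uniqueness statement then gives $u = u'$.
\end{itemize}
Injectivity can also be seen directly from Lemma~\ref{lem:surj}: a distribution vanishing on all $\phi \otimes \psi$ vanishes on all of $\Sch(X \times Y)$.

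Hence $\Phi$ is a canonical linear isomorphism, and its inverse is the map sending $\Ker$ to the distribution defined by~\eqref{eq:def-u-p}. There is no real obstacle, since all the substantive content sits in Theorem~\ref{thm:kernel-p}. The only point needing care is that linearity of $\Phi$ follows from uniqueness of the kernel rather than from any explicit formula.
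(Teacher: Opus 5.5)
Your proposal is correct and follows the paper's route: the isomorphism is the map $u \mapsto \Ker$ from Theorem~\ref{thm:kernel-p}, and its bijectivity comes from the existence and uniqueness statements there. The only cosmetic difference is that you derive linearity from uniqueness of the kernel, whereas the paper reads it off directly from the bilinearity of $\otimes$ and of the pairing in $\inner{\Ker\psi}{\phi} = \inner{u}{\phi \otimes \psi}$.
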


\begin{proof}
    By bilinearity of the tensor product and the pairing between distributions and Schwartz--Bruhat functions, the map~$u \longmapsto \Ker$ given by Theorem~\ref{thm:kernel-p}, namely the map~\eqref{eq:iso-Hom}, is a $\CC$-linear isomorphism.
\end{proof}

\begin{rema}\label{rema:kernel-locally-constant}
    Isomorphism~\eqref{eq:iso-Hom} can be recovered using isomorphism~\eqref{eq:iso-Sch} and the adjunction between the endofunctors $- \otimes_\CC \Sch(Y)$ and $\Hom_\CC(\Sch(Y), -)$ on the category of complex vector spaces. Indeed, we get
    \begin{multline*}
        \Sch'(X \times Y) = \Hom_\CC(\Sch(X \times Y), \CC) \cong \Hom_\CC(\Sch(X) \otimes_\CC \Sch(Y), \CC) \\
        \cong \Hom_\CC(\Sch(Y), \Hom_\CC(\Sch(X), \CC)) = \Hom_\CC(\Sch(Y), \Sch'(X)).
    \end{multline*}
\end{rema}

\subsection{Wave front sets and kernels}

We use the definition of wave front sets for~$p$-adic distributions introduced by Heifetz~\cite{Heifetz} and used by Aizenbud--Drinfeld~\cite{AizenbudDrinfeld} and by Cluckers--Halupczok--Loeser--Raibaut~\cite{CluckersHalupczokLoeserRaibaut}. Let us recall it. Let $\Psi \colon \QQ_p \to \CC^\times$ be an additive character which is nontrivial on~$\ZZ_p$ and trivial on $p\ZZ_p$. Let $\Lambda$ be an open subgroup of finite index of~$\QQ_p^\times$. On the $p$-adic vector space $\QQ_p^m$, we consider the canonical inner product $\inner\cdot\cdot$. Let $X$ be an open set of $\QQ_p^m$.

\begin{defi}[Heifetz~{\cite[§2]{Heifetz}}, CHLR~{\cite[Definition~2.8.1 \&{} Theorem~2.5.2]{CluckersHalupczokLoeserRaibaut}}]
    A distribution $u$ on $X$ is \emph{$\Lambda$-microlocally smooth} at a point $(x_0, \xi_0)$ of~$X \times (\QQ_p^m \setminus \{0\})$ if there exist open neighborhoods $U$ of $x_0$ in $X$ and $\smash{\check U}$ of $\xi_0$ in~$\QQ_p^m \setminus \{0\}$ such that, for any Schwartz--Bruhat function $\varphi$ on $X$ such that $\supp\varphi \subset U$, there exists an integer~$N$ such that
    \begin{equation}\label{eq:WF-p}
        \inner{u}{\varphi\Psi(\inner{\cdot}{\lambda\xi})} = 0
    \end{equation}
    for all elements $\lambda$ of $\Lambda$ such that $\ord\lambda < N$ and all points $\xi$ of $\check U$.
\end{defi}

\begin{rema}
    According to~\cite[Theorem~2.5.2]{CluckersHalupczokLoeserRaibaut}, the function $\xi \longmapsto \inner{u}{\varphi\Psi(\inner\cdot\xi)}$ represents the Fourier transform $\widehat{\varphi u}$ of the distribution $\varphi u$. Consequently, formula~\eqref{eq:WF-p} can be rewritten as
    \[ \widehat{\varphi u}(\lambda\xi) = 0. \]
\end{rema}

\begin{defi}\label{def:WF-p}
    The complement of the set of points of $X \times (\QQ_p^m \setminus \{0\})$ at which a distribution $u$ on $X$ is $\Lambda$-microlocally smooth is the \emph{$\Lambda$-wave front set} of the distribution~$u$, denoted by $\WF_\Lambda(u)$. We also set $\WF_\Lambda^0(u) \coloneq \WF_\Lambda(u) \cup (X \times \{0\})$.
\end{defi}

We give an analogue of Theorem~\ref{thm:WF-real} in the $p$-adic setting. Let $X$ be an open set of~$\QQ_p^m$ and $Y$ be an open set of $\QQ_p^n$.

\begin{theo}\label{thm:WF-p}
    Let $u$ be a distribution on $X \times Y$ and $\Ker$ be its kernel. Let $\psi$ be a Schwartz--Bruhat function on $Y$. Then
    \[ \WF_\Lambda(\Ker\psi) \subset \{(x, \xi) \in X \times (\QQ_p^m \setminus \{0\}) \mid \exists y \in \supp\psi, \; ((x, y), (\xi, 0)) \in \WF_\Lambda(u)\}. \]
\end{theo}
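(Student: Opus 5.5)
We prove the contrapositive. Fix $(x_0, \xi_0) \in X \times (\QQ_p^m \setminus \{0\})$ such that $((x_0, y), (\xi_0, 0)) \notin \WF_\Lambda(u)$ for every $y \in \supp\psi$. We show that $\Ker\psi$ is $\Lambda$-microlocally smooth at $(x_0, \xi_0)$.

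\textbf{Neighborhoods.} For each $y \in \supp\psi$, microlocal smoothness of $u$ at $((x_0, y), (\xi_0, 0))$ gives open neighborhoods $U_y \times V_y$ of $(x_0, y)$ and $\check W_y$ of $(\xi_0, 0)$. We may shrink these to products of polydiscs: $U_y \times V_y$ a polydisc, and $\check W_y \supset \check U_y \times \check V_y$ with $\check U_y$ a polydisc around $\xi_0$ and $\check V_y$ a polydisc around $0$. Since $\supp\psi$ is compact, finitely many $V_{y_1}, \dots, V_{y_r}$ cover it. As in the proof of Theorem~\ref{thm:dec-phi}, we may take them disjoint by discarding the smaller of two nested polydiscs. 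Set
\[ U \coloneq \bigcap_j U_{y_j}, \qquad \check U \coloneq \bigcap_j \check U_{y_j}. \]
Both are open neighborhoods of $x_0$ and $\xi_0$, since they are finite intersections of polydiscs containing these points. Shrinking further, we may assume $\psi$ is constant on each $V_{y_j}$ (refine the polydiscs using local constancy), so that $\psi = \sum_j c_j \cf_{V_{y_j}}$ with $V_{y_j} \subset V_y$ for the relevant $y$.

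\textbf{Key computation.} Let $\varphi \in \Sch(X)$ with $\supp\varphi \subset U$. By definition of the kernel (equation~\eqref{eq:kernel-p}),
\[ \inner{\Ker\psi}{\varphi\Psi(\inner\cdot{\lambda\xi})} = \sum_{j} c_j \inner{u}{(\varphi \otimes \cf_{V_{y_j}})\,\Psi(\inner{(\cdot,\cdot)}{\lambda(\xi, 0)})}, \]
because $\Psi(\inner{(x,y)}{(\lambda\xi, 0)}) = \Psi(\inner{x}{\lambda\xi})$. Each function $\varphi \otimes \cf_{V_{y_j}}$ is Schwartz--Bruhat with support in $U_{y_j} \times V_{y_j}$. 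Microlocal smoothness of $u$ at $((x_0, y_j), (\xi_0, 0))$ therefore gives an integer $N_j$ such that the $j$-th term vanishes for all $\lambda \in \Lambda$ with $\ord\lambda < N_j$ and all $(\xi, \eta) \in \check U_{y_j} \times \check V_{y_j}$. We apply this with $\eta = 0 \in \check V_{y_j}$ and $\xi \in \check U$.

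\textbf{Conclusion and main obstacle.} Taking $N \coloneq \min_j N_j$ gives equation~\eqref{eq:WF-p} for $\Ker\psi$ on $U \times \check U$. Hence $(x_0, \xi_0) \notin \WF_\Lambda(\Ker\psi)$.

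The only delicate point is the bookkeeping of the neighborhoods: the test function on the product must have support inside a single neighborhood from the definition, which is why we split $\psi$ over a finite cover. We also need the frequency $(\lambda\xi, 0)$ to stay in the cone neighborhood; this holds because the neighborhood of $(\xi_0, 0)$ contains $\check U_y \times \{0\}$, and the condition in the definition is imposed on $\xi$ while $\lambda$ varies freely with $\ord\lambda < N$. Unlike the real case, no partition-of-unity estimates are needed, since characteristic functions of polydiscs are themselves test functions.
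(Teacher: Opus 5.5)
Your proof is correct and follows essentially the same route as the paper. You cover $\supp\psi$ by finitely many disjoint polydiscs coming from the microlocal smoothness of $u$ at the points $((x_0,y),(\xi_0,0))$, intersect the $x$- and $\xi$-neighborhoods, split the test function $\phi\otimes\psi$ into pieces each supported in a single neighborhood, and take the minimum of the finitely many integers $N_j$. The one cosmetic difference is that you decompose $\psi$ itself as $\sum_j c_j\cf_{V_{y_j}}$ after refining so that $\psi$ is constant on each piece, whereas the paper writes $\phi\otimes\psi=\sum_{y}(\phi\otimes\psi)\cf_{U_y}$, which makes the refinement step unnecessary.
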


\begin{proof}
    We will the prove the complementary inclusion of the desired one. Let~$(x_0, \xi_0)$ be a point of $X \times (\QQ_p^m \setminus \{0\})$ such that
    \begin{equation}\label{eq:assumption-x0-xi0}
        \forall y \in \supp\psi, \qquad ((x_0, y), (\xi_0, 0)) \notin \WF_\Lambda(u).
    \end{equation}
    We want to show that $(x_0, \xi_0) \notin \WF_\Lambda(\Ker\psi)$. Let $y$ be a point of $\supp\psi$. By assumption~\eqref{eq:assumption-x0-xi0}, we can find some open neighborhoods $U_y$ of $(x_0, y)$ in $X \times Y$ and~$\check U_y$ of~$(\xi_0, 0)$ in $\QQ_p^{m + n} \setminus \{0\}$ such that, for any Schwartz--Bruhat function $\varphi$ on $X \times Y$ such that~$\supp\varphi \subset U_y$, there exists an integer~$N_y(\varphi)$ such that
    \begin{equation}\label{eq:WF-u}
        \inner{u}{\varphi\Psi(\inner{\cdot}{\lambda(\xi, \eta)})} = 0
    \end{equation}
    for all elements $\lambda$ of $\Lambda$ such that $\ord\lambda < N_y(\varphi)$ and all points $(\xi, \eta)$ of $\check U_y$. Reducing open sets $U_y$ and $\check U_y$, we can assume that the latter are polydiscs: we can write it as
    \[ U_y = B_{X, y} \times B_{Y, y} \qquad \text{and} \qquad
       \check U_y = \check B_{X, y} \times \check B_{Y, y} \]
    for some polydiscs $B_{X, y}$ of $X$, $B_{Y, y}$ of $Y$, $\check B_{X, y}$ of $\QQ_p^m \setminus \{0\}$ and $\check B_{Y, y}$ of $\QQ_p^n$. Moreover, we have
    \[ \supp\psi \subset \bigcup_{y \in \supp\psi} B_{Y, y}. \]
    Since the subset $\supp\psi$ is compact, we can find a finite set $I$ of $\supp\psi$ such that
    \begin{equation}\label{eq:supp-psi-compact}
        \supp\psi \subset \bigcup_{y \in I} B_{Y, y}.
    \end{equation}
    By proceeding as in the proof of Theorem~\ref{thm:dec-phi}, we can assume that the polydiscs $U_y$ with $y \in I$ are disjoint. Let us consider respectively these two open neighborhoods
    \[ V \coloneq \bigcap_{y \in I} B_{X, y} \subset X \qquad \text{and} \qquad
       \check V \coloneq \bigcap_{y \in I} \check B_{X, y} \subset \QQ_p^m \setminus \{0\} \]
    of the points $x_0$ and $\xi_0$. With them, we show that $(x_0, \xi_0) \notin \WF_\Lambda(\Ker\psi)$. Let $\phi$ be a Schwartz--Bruhat function on $X$ such that $\supp\phi \subset V$. We consider the Schwartz--Bruhat function $\varphi \coloneq \phi \otimes \psi$ on $X \times Y$ whose support verifies inclusions
    \[ \supp\varphi \subset \supp\phi \times \supp\psi \subset V \times \bigcup_{y \in I} B_{Y, y} \subset \bigsqcup_{y \in I} U_y \eqcolon U \]
    by equality~\eqref{eq:supp-psi-compact}. Furthermore, since the polydisc $U_y$ for $y \in I$ are disjoint, we write
    \begin{equation}\label{eq:dec-phi-Uy}
        \varphi = \sum_{y \in I} \varphi\cf_{U_y}.
    \end{equation}
    Let $\lambda$ be an element of $\Lambda$ such that
    \[ \ord\lambda < \min_{y \in I} N_y(\varphi\cf_{U_y}) \]
    and $\xi$ be an element of $\check V$. For each point $y$ of $I$, we find these three assertions
    \[ \supp\varphi\cf_{U_y} \subset U_y, \qquad
       (\xi, 0) \in \check U_y \qquad \text{and} \qquad
       \ord\lambda < N_y(\varphi\cf_{U_y}) \]
    which, by equation~\eqref{eq:WF-u}, lead to the equality
    \begin{equation}\label{eq:phi-Uy-zero}
        \inner{u}{\varphi\cf_{U_y}\Psi(\inner{\cdot}{\lambda(\xi, 0)})} = 0.
    \end{equation}
    Combining equations~\eqref{eq:dec-phi-Uy} and~\eqref{eq:phi-Uy-zero}, we obtain
    \begin{align*}
        \inner{\Ker\psi}{\phi\Psi(\inner{\cdot}{\lambda\xi})} &= \inner{u}{\phi\Psi(\inner{\cdot}{\lambda\xi}) \otimes \psi} \\
        &= \inner{u}{\varphi\Psi(\inner{\cdot}{\lambda(\xi, 0)})} \\
        &= \sum_{y \in I} \inner{u}{\varphi\cf_{U_y}\Psi(\inner{\cdot}{\lambda(\xi, 0)})} = 0
    \end{align*}
    which concludes that $(x_0, \xi_0) \notin \WF_\Lambda(\Ker\psi)$.
\end{proof}

We now give the analog result of~\cite[Theorem~8.2.13]{Hormander}. Until the end of this section, we consider a distribution $u$ on $X \times Y$ and its associated kernel $\Ker$. We set
\begin{align*}
    \WF'_\Lambda(u) &\coloneq \{(x, y, \xi, \eta) \in X \times Y \times (\QQ_p^{m + n} \setminus \{0\}) \mid (x, y, \xi, -\eta) \in \WF_\Lambda(u)\}, \\
    \WF_\Lambda(u)_Y &\coloneq \{(y, \eta) \in Y \times (\QQ_p^n \setminus \{0\}) \mid \exists x \in X, (x, y, 0, \eta) \in \WF_\Lambda(u)\} \qquad \text{and} \\
    \WF_\Lambda(u)_X &\coloneq \{(x, \xi) \in X \times (\QQ_p^m \setminus \{0\}) \mid \exists y \in Y, (x, y, \xi, 0) \in \WF_\Lambda(u)\}.
\end{align*}
We will use the notion of support of a distribution as defined in~\cite[Definition~2.4.3]{CluckersHalupczokLoeserRaibaut}, denoted by $\supp(-)$. We denote by $\SC'(Y)$ the complex vector space of distributions on~$Y$ with compact support and, for each compact subset $M$ of $Y$, by $\SC'_M(Y)$ the complex vector space of distributions on~$Y$ whose support is contained in $M$. Also for a closed $\Lambda$-cone $\Gamma$ of $Y \times (\QQ_p^n \setminus \{0\})$, we denote by $\Sch'_{\Gamma, \Lambda}(Y)$ the set of distributions on~$Y$ whose $\Lambda$-wave front set is contained in $\Gamma$ and we endow it with the topology given by~\cite[Definition~2.9.1]{CluckersHalupczokLoeserRaibaut} as well as the set $\Sch'(X)$. Finally, we use the notion of tensor product and product of distributions recalled in~\cite[Theorems~2.7.1 \&{}~2.9.8]{CluckersHalupczokLoeserRaibaut}.

\begin{lemm}
    The set $\Sch'_{\Gamma, \Lambda}(Y)$ is a complex vector subspace of $\Sch'(Y)$.
\end{lemm}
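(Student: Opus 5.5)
We must show that $\Sch'_{\Gamma, \Lambda}(Y)$ contains the zero distribution and is stable under addition and scalar multiplication. Since it is defined as the set of $u \in \Sch'(Y)$ with $\WF_\Lambda(u) \subset \Gamma$, it suffices to establish two facts about $\Lambda$-wave front sets:
\[ \WF_\Lambda(0) = \emptyset, \qquad \WF_\Lambda(cu) \subset \WF_\Lambda(u), \qquad \WF_\Lambda(u + v) \subset \WF_\Lambda(u) \cup \WF_\Lambda(v) \]
for all $u, v \in \Sch'(Y)$ and $c \in \CC$. Both are proved by working with the complement, i.e. with the definition of $\Lambda$-microlocal smoothness.

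The first two facts are immediate from linearity. For the zero distribution, every pairing in~\eqref{eq:WF-p} vanishes, so any neighborhoods and any $N$ work. If $u$ is $\Lambda$-microlocally smooth at $(y_0, \eta_0)$, then $\inner{cu}{\varphi\Psi(\inner{\cdot}{\lambda\eta})} = c\inner{u}{\varphi\Psi(\inner{\cdot}{\lambda\eta})}$. Hence the same neighborhoods and the same integer $N(\varphi)$ witness that $cu$ is $\Lambda$-microlocally smooth there.

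For the sum, let $(y_0, \eta_0)$ be a point outside $\WF_\Lambda(u) \cup \WF_\Lambda(v)$.
\begin{enumerate}
\item Take neighborhoods $U_u, \check U_u$ witnessing smoothness of $u$, and $U_v, \check U_v$ witnessing smoothness of $v$.
\item Set $U \coloneq U_u \cap U_v$ and $\check U \coloneq \check U_u \cap \check U_v$. These are open neighborhoods of $y_0$ and $\eta_0$ respectively, with $\check U$ still contained in $\QQ_p^n \setminus \{0\}$.
\item Let $\varphi$ be a Schwartz--Bruhat function with $\supp\varphi \subset U$. Then $\supp\varphi$ lies in both $U_u$ and $U_v$, so we obtain integers $N_u(\varphi)$ and $N_v(\varphi)$. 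Put $N \coloneq \min(N_u(\varphi), N_v(\varphi))$.
\item For $\lambda \in \Lambda$ with $\ord\lambda < N$ and $\eta \in \check U$, linearity of the pairing gives
\[ \inner{u + v}{\varphi\Psi(\inner{\cdot}{\lambda\eta})} = \inner{u}{\varphi\Psi(\inner{\cdot}{\lambda\eta})} + \inner{v}{\varphi\Psi(\inner{\cdot}{\lambda\eta})} = 0 + 0 = 0. \]
\end{enumerate}
So $u + v$ is $\Lambda$-microlocally smooth at $(y_0, \eta_0)$, which proves the third fact.

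There is no real obstacle. The only point needing care is that the definition quantifies over $\varphi$ \emph{after} fixing the neighborhoods, so one must intersect the neighborhoods first and only then choose $N$ depending on $\varphi$. This is exactly the order followed above.
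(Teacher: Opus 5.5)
Your proposal is correct and follows the paper's approach. The paper simply asserts $\WF_\Lambda(\alpha u + \beta v) \subset \WF_\Lambda(u) \cup \WF_\Lambda(v)$ from the definition. You supply the omitted details (intersect the neighborhoods first, then take the minimum of the two integers), and you also make explicit that $\WF_\Lambda(0) = \emptyset$, so the zero distribution lies in $\Sch'_{\Gamma, \Lambda}(Y)$.
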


\begin{proof}
    From Definition~\ref{def:WF-p} of $\Lambda$-wave front set, for two distributions $u$ and $v$ on~$Y$ and two complex numbers $\alpha$ and $\beta$, we can show the inclusion
    \[ \WF_\Lambda(\alpha u + \beta v) \subset \WF_\Lambda(u) \cup \WF_\Lambda(v). \]
    Thus the lemma follows.
\end{proof}

\begin{theo}\label{thm:extension}
    Let $\Gamma$ be a closed $\Lambda$-cone of~$Y \times (\QQ_p^n \setminus \{0\})$ such that
    \begin{equation}\label{eq:Gamma}
        \Gamma \cap \WF'_\Lambda(u)_Y = \emptyset.
    \end{equation}
    Then the kernel $\Ker \colon \Sch(Y) \to \Sch'(X)$ of the distribution $u$ extends to a unique $\CC$-linear map
    \[ \Ker \colon \SC'(Y) \cap \Sch'_{\Gamma, \Lambda}(Y) \to \Sch'(X) \]
    such that, for all polydiscs $B$ of $Y$, its restriction
    \[ \Ker \colon \SC'_B(Y) \cap \Sch'_{\Gamma, \Lambda}(Y) \to \Sch'(X) \]
    is continuous\footnote{The topology of the domain is the topology induced by the topology on the space $\Sch'_{\Gamma, \Lambda}(Y)$.}. Moreover, let $v$ be a distribution of $\SC'(Y) \cap \Sch'_{\Gamma, \Lambda}(Y)$. Then
    \begin{equation}\label{eq:WF-Kv}
        \WF_\Lambda(\Ker v) \subset \WF_\Lambda(u)_X \cup \WF'_\Lambda(u) \circ \WF_\Lambda(v)
    \end{equation}
    where
    \begin{multline*}
        \WF'_\Lambda(u) \circ \WF_\Lambda(v) \coloneq \{(x, \xi) \in X \times (\QQ_p^m \setminus \{0\}) \mid \\
        \exists (y, \eta) \in \WF_\Lambda(v), (x, y, \xi, \eta) \in \WF'_\Lambda(u)\}.
    \end{multline*}
\end{theo}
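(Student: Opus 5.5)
We follow the proof of \cite[Theorem~8.2.13]{Hormander}, replacing the real Fourier analysis by the $p$-adic product of distributions recalled in \cite[Theorems~2.7.1 \&{}~2.9.8]{CluckersHalupczokLoeserRaibaut}. The candidate extension is
\[ \inner{\Ker v}{\phi} \coloneq \inner{u \cdot (1_X \otimes v)}{\phi \otimes 1_Y}, \qquad \phi \in \Sch(X), \]
where $1_X \otimes v$ is the tensor product of the constant distribution on $X$ with $v$.

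\textbf{Step 1: defining the product.} Since $1_X$ is smooth, $\WF_\Lambda(1_X \otimes v)$ is contained in $\{(x, y, 0, \eta) \mid (y, \eta) \in \WF_\Lambda(v)\}$, and hence in $\{(x,y,0,\eta) \mid (y,\eta)\in\Gamma\}$. The product criterion of \cite[Theorem~2.9.8]{CluckersHalupczokLoeserRaibaut} requires that no point $(x, y, \xi, \eta)$ of $\WF_\Lambda(u)$ has $(x, y, -\xi, -\eta) \in \WF_\Lambda(1_X \otimes v)$. Such a point would need $\xi = 0$ and $(y, -\eta) \in \Gamma$. That is, $(y, \eta) \in \WF_\Lambda(u)_Y$ with $(y, -\eta) \in \Gamma$, i.e.\ $(y,-\eta)\in\Gamma\cap\WF'_\Lambda(u)_Y$. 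Hypothesis~\eqref{eq:Gamma} rules this out. (I read $\WF'_\Lambda(u)_Y$ as the $Y$-projection of $\WF'_\Lambda(u)$; the sign convention has to be matched to the convention of \cite{CluckersHalupczokLoeserRaibaut}.)

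\textbf{Step 2: pairing against $\phi \otimes 1_Y$.} The function $\phi \otimes 1_Y$ is not Schwartz--Bruhat. However, $v$ has compact support in some polydisc $B$, so the product is supported in $X \times B$. We may therefore replace $1_Y$ by $\cf_B$, and the result is independent of $B$. Linearity in $v$ is clear.

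\textbf{Step 3: agreement with $\Ker$.} For $v = \psi \in \Sch(Y)$, the product $u\cdot(1\otimes\psi)$ is the ordinary multiplication by the locally constant function $1 \otimes \psi$. Pairing with $\phi\otimes\cf_B$ then gives $\inner{u}{\phi\otimes\psi} = \inner{\Ker\psi}{\phi}$, so the new map extends the kernel.

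\textbf{Step 4: continuity.} Continuity on $\SC'_B(Y)\cap\Sch'_{\Gamma,\Lambda}(Y)$ follows from the sequential continuity of the tensor product and of the product in the topology of \cite[Definition~2.9.1]{CluckersHalupczokLoeserRaibaut}.

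\textbf{Step 5: uniqueness.} Uniqueness comes from density of $\Sch(Y)$, or more precisely of the functions supported in $B$, in $\SC'_B(Y)\cap\Sch'_{\Gamma,\Lambda}(Y)$ for that topology. I expect to prove this by regularization: convolve $v$ with $p^{kn}\cf_{p^k\ZZ_p^n}$ and check that this converges in the Heifetz-type topology. This is the step I consider the main obstacle, since the $p$-adic analogue of \cite[Theorem~8.2.3]{Hormander} must be established or located in \cite{CluckersHalupczokLoeserRaibaut}.

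\textbf{Step 6: the wave front bound~\eqref{eq:WF-Kv}.} The map $\phi\mapsto\phi\otimes\cf_B$ amounts to pushing forward $w \coloneq u\cdot(1\otimes v)$ along the projection $X\times Y\to X$, and $w$ has support proper over $X$. The pushforward estimate (as in Theorem~\ref{thm:WF-p}) gives
\[ \WF_\Lambda(\Ker v)\subset\{(x,\xi)\mid \exists y,\ (x,y,\xi,0)\in\WF_\Lambda(w)\}. \]
This can be proved directly: take $\psi$ to be $\cf_B$ and cover $\supp v$ by finitely many polydiscs exactly as in the proof of Theorem~\ref{thm:WF-p}. The product theorem bounds $\WF_\Lambda(w)$ by
\[ \WF_\Lambda(u)\ \cup\ \WF_\Lambda(1\otimes v)\ \cup\ \{(x,y,\xi+\xi',\eta+\eta')\}, \]
where $(x,y,\xi,\eta)\in\WF^0_\Lambda(u)$ and $(x,y,\xi',\eta')\in\WF^0_\Lambda(1\otimes v)$. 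We then impose the condition that the $Y$-covector vanishes:
\begin{itemize}
\item a point coming from $\WF_\Lambda(u)$ gives $\WF_\Lambda(u)_X$;
\item a point coming from $1\otimes v$ would have $\xi=0$, which is excluded;
\item a sum term has $\xi'=0$ and $\eta=-\eta'$ with $(y,\eta')\in\WF_\Lambda(v)$, which gives exactly $\WF'_\Lambda(u)\circ\WF_\Lambda(v)$.
\end{itemize}
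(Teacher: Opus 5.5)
Your proposal is correct and follows essentially the same route as the paper. Like the paper, you check the product criterion of \cite[Theorem~2.9.8]{CluckersHalupczokLoeserRaibaut} via hypothesis~\eqref{eq:Gamma}, set $\inner{\Ker v}{\phi} = \inner{(1 \otimes v)u}{\phi \otimes \cf_B}$ with $B$-independence coming from the support of the product, obtain continuity from the continuity of $v \mapsto 1 \otimes v$ and of the pullback $\delta^*$, and derive inclusion~\eqref{eq:WF-Kv} from Theorem~\ref{thm:WF-p} with $\psi = \cf_B$ together with the product wave front bound. The density step you flag as the main obstacle is simply quoted in the paper from \cite[Proposition~2.9.2]{CluckersHalupczokLoeserRaibaut}, so no new regularization argument is needed there.
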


\begin{rema}
    Let $\Gamma'$ be another $\Lambda$-cone of~$Y \times (\QQ_p^n \setminus \{0\})$ which satisfies equation~\eqref{eq:Gamma} such that $\Gamma \subset \Gamma'$. Then the two extensions
    \[ \Ker \colon \SC'(Y) \cap \Sch'_{\Gamma, \Lambda}(Y) \to \Sch'(X) \qquad \text{and} \qquad
       \Ker \colon \SC'(Y) \cap \Sch'_{\Gamma', \Lambda}(Y) \to \Sch'(X) \]
    coincide on $\SC'(Y) \cap \Sch'_{\Gamma, \Lambda}(Y)$.
\end{rema}

\begin{rema}
    If $\WF_\Lambda(u) \subset X \times Y \times (\QQ_p^m \setminus \{0\}) \times (\QQ_p^n \setminus \{0\})$, then $\WF_\Lambda(u)_X = \emptyset$ and equation~\eqref{eq:WF-Kv} becomes
    \[ \WF_\Lambda(\Ker v) \subset \WF'_\Lambda(u) \circ \WF_\Lambda(v). \]
    In that case, if furthermore $X = Y$, then it is usually said that the $\Lambda$-wave front set~$\WF_\Lambda(\Ker v)$ of the distribution $\Ker v$ is included in the~$\Lambda$-wave front set $\WF_\Lambda(v)$ of the distribution $v$ transported by the set $\WF'_\Lambda(u)$ as in~\cite[§II.B.2.1, Théorème~2.1]{AlinhacGerard}.
\end{rema}

\begin{proof}
    \subproof{Existence}
    Let $v$ be a distribution of $\SC'(Y) \cap \Sch'_{\Gamma, \Lambda}(Y)$. First, let us prove that the product of distributions $(1 \otimes v)u$ is well-defined. By~\cite[Theorem~2.8.10]{CluckersHalupczokLoeserRaibaut} for instance, we get
    \begin{align}
        \WF_\Lambda(1 \otimes v) &\subset \WF_\Lambda^0(1) \times \WF_\Lambda^0(v) \notag \\
        &= [\emptyset \cup (X \times \{0\})] \times \WF_\Lambda^0(v) \notag \\
        &= \{(x, y, \xi, \eta) \in X \times Y \times \QQ_p^{m + n} \mid \xi = 0, (y, \eta) \in \WF_\Lambda^0(v)\} \label{eq:inc-WF}
    \end{align}
    where Cartesian products are seen as products in the cotangent bundle of $X \times Y$ and we recall that $\WF_\Lambda^0(v) \coloneq \WF_\Lambda(v) \cup (Y \times \{0\})$ as in Definition~\ref{def:WF-p}. Then let us prove that
    \begin{equation}\label{eq:cond-produit}
        \forall (x, y, \xi, \eta) \in \WF_\Lambda(u), \qquad (x, y, -\xi, -\eta) \notin \WF_\Lambda(1 \otimes v).
    \end{equation}
    By contradiction, we assume that there exists an element~$(x, y, \xi, \eta)$ of~$\WF_\Lambda(u)$ such that
    \[ (x, y, -\xi, -\eta) \in \WF_\Lambda(1 \otimes v). \]
    The inclusion~\eqref{eq:inc-WF} gives~$\xi = 0$ and~$(y, -\eta) \in \WF_\Lambda^0(v)$. Then we conclude two things. On the one hand, since~$(0, \eta) \ne (0, 0)$, we find $\eta \ne 0$ and so $(y, -\eta) \in \WF_\Lambda(v)$. On the other hand, since $(x, y, 0, \eta) \in \WF_\Lambda(u)$, we get~$(x, y, 0, -\eta) \in \WF'_\Lambda(u)$ and so~$(y, -\eta) \in \WF'_\Lambda(u)_Y$. This is a contradiction because, by assumption~\eqref{eq:Gamma}, we have
    \[ (y, -\eta) \in \WF_\Lambda(v) \cap \WF'_\Lambda(u)_Y \subset \Gamma \cap \WF'_\Lambda(u)_Y = \emptyset. \]
    This proves condition~\eqref{eq:cond-produit}. Then by~\cite[Theorem~2.9.8]{CluckersHalupczokLoeserRaibaut} for instance, we can define the product of distributions~$u_v \coloneq (1 \otimes v)u$.

    We consider the kernel $\Ker_v$ of the distribution $u_v$. Let $B$ and $C$ be two polydiscs of~$Y$ such that $\supp v \subset B$ and $\supp v \subset C$. Let us prove the equation
    \begin{equation}\label{eq:indépendance}
        \Ker_v\cf_B = \Ker_v\cf_C.
    \end{equation}
    Since the polydiscs $B$ and $C$ are not disjoint, we can assume that $B \supset C$. Let $\phi$ be a Schwartz--Bruhat function on $X$. We write
    \begin{align*}
        \inner{\Ker_v\cf_B - \Ker_v\cf_C}{\phi} &= \inner{u_v}{\phi \otimes \cf_{B \setminus C}} \\
        &= \inner{\cf_{X \times (B \setminus C)}u_v}{\phi \otimes \cf_{B \setminus C}}.
    \end{align*}
    Denoting by $\delta \colon X \times Y \to (X \times Y) \times (X \times Y)$ the diagonal map, the definition of product of distributions (see~\cite[Theorem~2.9.8]{CluckersHalupczokLoeserRaibaut} for instance) gives
    \[ u_v = \delta^*((1 \otimes v) \otimes u). \]
    where the pullback $\delta^*$ is defined by~\cite[Theorem~2.9.3]{CluckersHalupczokLoeserRaibaut}. By the same theorem, we then obtain
    \[ \supp u_v \subset \delta^{-1}(\supp((1 \otimes v) \otimes u)). \]
    But the definition of tensor product (see~\cite[Theorem~2.7.1]{CluckersHalupczokLoeserRaibaut}) gives immediately
    \[ \supp((1 \otimes v) \otimes u) = (X \times \supp v) \times \supp u \]
    and we get
    \[ \supp u_v \subset (X \times \supp v) \cap \supp u. \]
    Since $[X \times (B \setminus C)] \cap (X \times \supp v) = \emptyset$, we have $[X \times (B \setminus C)] \cap \supp u_v = \emptyset$ and we deduce that
    \[ \inner{\Ker_v\cf_B - \Ker_v\cf_C}{\phi} = 0 \]
    which proves equation~\eqref{eq:indépendance}.

    Let us define the distribution $\Ker v$. By assumption, the support of the distribution $v$ is compact. Let $B$ be a polydisc of $Y$ such that $\supp v \subset B$. With equation~\eqref{eq:indépendance}, the distribution
    \begin{equation}\label{eq:Kv}
        \Ker v \coloneq \Ker_v\cf_B
    \end{equation}
    does not depend on the polydisc $B$. This gives a map
    \[ \Ker \colon \SC'(Y) \cap \Sch'_{\Gamma, \Lambda}(Y) \to \Sch'(X) \]
    which is $\CC$-linear by bilinearity of the product and tensor product of distributions and by linearity of the map associating a distribution to a kernel.

    Let us prove that this map extends the kernel $\Ker \colon \Sch(Y) \to \Sch'(X)$. Let $v$ be a Schwartz--Bruhat function on $Y$. Let $B$ be a polydisc of $Y$ such that $\supp v \subset B$. Let~$\phi$ be a Schwartz--Bruhat function on $X$. Then we can write $\cf_Bv = v$ and we obtain
    \begin{align*}
        \inner{\Ker_v\cf_B}{\phi} &= \inner{u_v}{\phi \otimes \cf_B} \\
        &= \inner{(1 \otimes v)u}{\phi \otimes \cf_B} \\
        &= \inner{u}{\phi \otimes \cf_Bv} \\
        &= \inner{u}{\phi \otimes v} \\
        &= \inner{\Ker v}{\phi}.
    \end{align*}
    This concludes that
    \[ \Ker v = \Ker_v\cf_B \]
    where the left-hand side is given by the map $\Ker \colon \Sch(Y) \to \Sch'(X)$.

    Finally, let $B$ be a polydisc of $Y$. Let us prove that the restriction
    \[ \Ker \colon \SC'_B(Y) \cap \Sch'_{\Gamma, \Lambda}(Y) \to \Sch'(X) \]
    is continuous. Let $(v_j)_{j \geqslant 0}$ be a sequence of $\SC'_B(Y) \cap \Sch'_{\Gamma, \Lambda}(Y)$ and $v$ be a distribution of~$\SC'_B(Y) \cap \Sch'_{\Gamma, \Lambda}(Y)$ such that $v_j \to v$ in $\Sch'_{\Gamma, \Lambda}(Y)$. Let us show that $\Ker v_j \to \Ker v$ in~$\Sch'(X)$. Let $\phi$ be a Schwartz--Bruhat function on $X$. We want to show that
    \begin{equation}\label{eq:convergence}
        \inner{\Ker v_j}{\phi} \to \inner{\Ker v}{\phi}.
    \end{equation}
    For all indexes $j$, with definition~\eqref{eq:Kv}, we write
    \begin{align*}
        \inner{\Ker v_j}{\phi} &= \inner{\Ker_{v_j}\cf_B}{\phi} \\
        &= \inner{u_{v_j}}{\phi \otimes \cf_B}.
    \end{align*}
    But directly from~\cite[Definition~2.7.1]{CluckersHalupczokLoeserRaibaut}, we can show that the $\CC$-linear map
    \[ \fnc{\Sch'(Y)}{\Sch'(X \times Y),}{v}{1 \otimes v} \]
    is continuous and we can deduce that the $\CC$-linear map
    \[ \fnc{\Sch_{\Gamma, \Lambda}'(Y)}{\Sch'(X \times Y),}{v}{u_v = (1 \otimes v)u} \]
    is also continuous since the pullback $\delta^*$ is continuous by~\cite[Theorem~2.9.3]{CluckersHalupczokLoeserRaibaut}. With this and since $v_j \to v$ in $\Sch'(Y)$, we deduce convergence~\eqref{eq:convergence}. This concludes the continuity of the restriction and so the existence part.

    \subproof{Uniqueness}
    Let $\Ker$ and $\Ker'$ be two maps as in Theorem~\ref{thm:extension}. Let $v$ be a distribution of~$\SC'(Y) \cap \Sch'_{\Gamma, \Lambda}(Y)$ and $B$ be a polydisc of $Y$ such that $\supp v \subset B$. By~\cite[Proposition~2.9.2]{CluckersHalupczokLoeserRaibaut}, there exists a sequence $(v_j)_{j \geqslant 0}$ of $\Sch(Y)$ such that $v_j \to v$ in~$\Sch'_{\Gamma, \Lambda}(Y)$. In particular, we have $\cf_Bv_j \to \cf_Bv = v$ in $\Sch'(Y)$. By continuity of the restrictions
    \[ \Ker, \Ker' \colon \SC'_B(Y) \cap \Sch'_{\Gamma, \Lambda}(Y) \to \Sch'(X), \]
    we obtain
    \[ \Ker(\cf_Bv_j) \to \Ker v \qquad \text{and} \qquad
       \Ker'(\cf_Bv_j) \to \Ker'v. \]
    But since the two maps $\Ker$ and $\Ker'$ extends the kernel $\Ker \colon \Sch(Y) \to \Sch'(X)$, we obtain~$\Ker(\cf_Bv_j) = \Ker'(\cf_Bv_j)$ for all indexes $j$. This concludes $\Ker v = \Ker'v$.

    \subproof{Inclusion~(\ref{eq:WF-Kv})}
    Let $v$ be a distribution in $\SC'(Y) \cap \Sch'_{\Gamma, \Lambda}(Y)$. Let $B$ be a polydisc of $Y$ such that $\supp v \subset B$. By Theorem~\ref{thm:WF-p} and with definition~\eqref{eq:Kv}, we get
    \[ \WF_\Lambda(\Ker v)\subset \{(x, \xi) \in X \times (\QQ_p^m \setminus \{0\}) \mid \exists y \in B, (x, y, \xi, 0) \in \WF_\Lambda(u_v)\}. \]
    By~\cite[Theorem~2.9.8]{CluckersHalupczokLoeserRaibaut}, we also get
    \begin{multline*}
        \WF_\Lambda(u_v) \subset \WF_\Lambda^0(u_v) = \{(x, y, \xi' + \xi'', \eta' + \eta'') \in X \times Y \times (\QQ_p^{m + n} \setminus \{0\}) \\
        : (x, y, \xi', \eta') \in \WF_\Lambda^0(1 \otimes v), (x, y, \xi'', \eta'') \in \WF_\Lambda^0(u)\}.
    \end{multline*}
    Let $(x, \xi)$ be a point of $\WF_\Lambda(\Ker v)$. There exists a point $y$ of $B$ such that
    \[ (x, y, \xi, 0) \in \WF_\Lambda(u_v), \]
    and then there exist two elements $\xi'$ and $\xi''$ of $\QQ_p^m$ and an element $\eta'$ of $\QQ_p^n$ such that~$\xi = \xi' + \xi''$ and
    \[ (x, y, \xi', \eta') \in \WF_\Lambda^0(1 \otimes v) \qquad \text{and} \qquad
       (x, y, \xi'', -\eta') \in \WF_\Lambda^0(u). \]
    Since $(x, y, \xi', \eta') \in \WF_\Lambda^0(1 \otimes v)$, we distinguish two cases.
    \begin{itemize}
        \item We assume that $(x, y, \xi', \eta') \in \WF_\Lambda^0(1 \otimes v)$. Then by inclusion~\eqref{eq:inc-WF}, we deduce that~$\xi' = 0$. We obtain $\xi = \xi'' \ne 0$ which leads to $(x, y, \xi, -\eta') \in \WF_\Lambda(u)$ and
        \[ (x, y, \xi, \eta') \in \WF'_\Lambda(u). \]
        Moreover, still by inclusion~\eqref{eq:inc-WF}, we have $(y, \eta') \in \WF_\Lambda^0(v)$.
        \begin{itemize}
            \item If $\eta' = 0$, then $(x, y, \xi, 0) \in \WF_\Lambda(u)$, so $(x, \xi) \in \WF_\Lambda(u)_X$.

            \item If $\eta' \ne 0$, then $(y, \eta') \in \WF_\Lambda(v)$, so $(x, \xi) \in \WF'_\Lambda(u) \circ \WF_\Lambda(v)$.
        \end{itemize}

        \item We assume that $(x, y, \xi', \eta') \in X \times Y \times \{0\}$. Then $\xi' = 0$ and $\eta' = 0$ which leads to~$\xi = \xi'' \ne 0$. We deduce that $(x, y, \xi, 0) \in \WF_\Lambda(u)$, so $(x, \xi) \in \WF_\Lambda(u)_X$.
    \end{itemize}
    This concludes inclusion~\eqref{eq:WF-Kv}.
\end{proof}

\section{Kernels of motivic distributions}
\label{sec:mot}

\subsection{Motivic integration}

We will work with Cluckers--Loeser's motivic integration~\cite{CluckersLoeser2008,CluckersLoeser2010}. We recall briefly the objects introduced in these articles and we refer to those for more details.

\subsubsection{A brief overview}

We fix a characteristic zero field $k$. We consider a Denef--Pas language $\L_\text{DP}$, i.e. a language with three sorts: the valued field sort (equipped with the ring language), the residual field sort (equipped with an extension of the ring language by some constant symbols) and the value group sort (equipped with an extension of the Presburger language by some constant symbols) endowed with a symbol for the valuation $\ord$ and one for the angular map $\ac$. We consider the extension~$\L_\text{DP}(k)$ of this language by adding constant symbols for each element of the sets $k\ls t$ and $k$, respectively in the valued field sort and in the residue field sort.

Let $\Field_k$ be the category of fields which contain the field $k$. Let $n$, $\ell$ and $r$ be three nonnegative integers. For a field $K$ of $\Field_k$, we set
\[ h[n, \ell, r](K) \coloneqq K\ls t^n \times K^\ell \times \ZZ^r. \]
A \emph{definable subassignment} $X$, also called \emph{definable set}, of $h[n, \ell, r]$ is a collection of subsets~$X(K)$ of $h[n, \ell, r](K)$ for each field $K$ of $\Field_k$ such that there exists an~$\L_\text{DP}(k)$-formula $\phi(x, \xi, \alpha)$ such that, for any field $K$ of $\Field_k$, we have
\[ X(K) = \{(x, \xi, \alpha) \in h[n, \ell, r](K) \mid (K\ls t, K, \ZZ) \models \phi(x, \xi, a)\}. \]
A \emph{definable morphism} between two definable subassignments $X$ and $Y$ is a collection of maps $f_K \colon X(K) \to Y(K)$ for each field $K$ of $\Field_k$ such that their graphs form a definable subassignment. A \emph{point} of a definable subassignment $X$ is a data of a field~$K$ of $\Field_k$ and an element~$x_0$ of $X(K)$. In this case, we will write $(x_0, K) \in X$%
    \footnote{The correct notation would be $(x_0, K) \in \points X$, but for simplicity, we will omit the bars.}
and we denote by $\points X$ the set of points of $X$.

Let $\Def_k$ be the category of definable subassignments over the field $k$. We denote by $\{*\}$ its final object. For a definable subassignment $X$, we denote by
\begin{itemize}
    \item $X[n, \ell, r]$ the product $X \times h[n, \ell, r]$ for some nonnegative integers $n$, $\ell$ and $r$;

    \item $\Def_X$ the category of objects of $\Def_k$ over $X$, i.e. of definable morphisms~$Y \to X$ for some definable subassignments $Y$;

    \item $\Cons(X)\expe$ the ring of exponential constructible motivic functions on $X$;
\end{itemize}
We denote by $\LL$ the Lefschetz symbol and by $\Exp(-)$ the exponential at the level of valued field.

Let $P$ be a definable subassignment. For an object $p \colon X \to P$ of~$\Def_P$ whose all fibers have the same dimension $d$, we denote by~$\Inte_P(X)\expe$ or $\Inte_p(X)\expe$ the $\Cons(P)\expe$-submodule of $\Cons(X)\expe$ of $p$-integrable exponential constructible motivic functions on~$X$ with parameter relative to $P$ (see~\cite[§7]{CluckersLoeser2010}).

For a definable morphism $f \colon X \to Y$, we denote by~$f^* \colon \Cons(Y)\expe \to \Cons(X)\expe$ its pullback morphism. For an object $f \colon X \to P$ of $\Def_P$, we denote by
\[ f_! \colon \Inte_P(X)\expe \to \Cons(P)\expe \]
its pushforward morphism (see~\cite[Theorem~14.1.1]{CluckersLoeser2008} and~\cite[Theorem~4.3.1 \&{}~§7]{CluckersLoeser2010}).

For a definable morphism $\alpha \colon P \to h[0, 0, 1]$, we denote by $\Ball_P(0, \alpha)$ the \emph{ball of radius $\alpha$ centered at the origin}, that is
\[ \Ball_P(0, \alpha) \coloneq \{(p, x) \in P[1, 0, 0] \mid \ord x \geqslant \alpha(p)\}. \]
When $P = \{*\}$, this ball will be simply denoted by $\Ball(0, \alpha)$. For a positive integer~$n$, we denote by $\Ball_P(0, \alpha)^n$ the fiber product of $n$ copies of~$\Ball_P(0, \alpha)$ over $P$. For a definable morphism~$c \colon P \to h[n, 0, 0]$, we set
\[ \Ball_P(c, \alpha) \coloneq \{(p, x) \in P[n, 0, 0] \mid \ord(x - c(p)) \geqslant \alpha(p)\} \]
where $\ord(y_1, \dots, y_n) \coloneq \min(\ord y_1, \dots, \ord y_n)$ for a point $(y_1, \dots, y_n)$ of~$h[n, 0, 0]$. With the canonical projection to $P$, these are objects of $\Def_P$.

Also in this section, we will use the notion of evaluation of constructible motivic functions introduced by Cluckers--Halupczok~\cite{CluckersHalupczok}. We should think evaluation in the natural way. In particular, we can recover constructible motivic functions from their evaluations (see~\cite[Theorem~1]{CluckersHalupczok})

\subsubsection{Convolution product}

To begin, let us recall the definition of the convolution product for constructible functions introduced by Cluckers--Loeser~\cite{CluckersLoeser2010}. Let $P$ be a definable subassignment and $n$ be a positive integer. We consider the two canonical projections~$\pi_1, \pi_2 \colon P[2n, 0, 0] \to P[n, 0, 0]$ and the sum morphism
\[ \fonc{s}{P[2n, 0, 0]}{P[n, 0, 0],}{(p, x, y)}{(p, x + y).} \]

\begin{defi}[Cluckers--Loeser~{\cite[Proposition-Definition~7.4.1]{CluckersLoeser2010}}]\label{def:convol}
    Let $f$ and $g$ be two functions of~$\Inte_P(P[n, 0, 0])\expe$. Then the function $\pi_1^*f \cdot \pi_2^*g$ of~$\Cons(P[2n, 0, 0])\expe$ lies in $\Inte_s(P[2n, 0, 0])\expe$ and the function
    \[ f * g \coloneq s_!(\pi_1^*f \cdot \pi_2^*g) \]
    of $\Cons(P[n, 0, 0])\expe$ lies in $\Inte_P(P[n, 0, 0])\expe$.
\end{defi}

The function $f * g$ is called the \emph{convolution product} of the functions $f$ and $g$. We give another expression of the convolution product (see~\cite[Remark~2.19]{Raibaut}).

\begin{prop}\label{prop:convol}
    Let $f$ and $g$ be two functions of~$\Inte_P(P[n, 0, 0])\expe$. We consider the definable morphism
    \[ \fonc{d}{P[2n, 0, 0]}{P[n, 0, 0],}{(p, z, y)}{(p, y - z).} \]
    Then
    \[ f * g = \pi_{2!}(\pi_1^*f \cdot d^*g). \]
\end{prop}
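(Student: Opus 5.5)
The plan is to obtain the identity $f * g = \pi_{2!}(\pi_1^*f \cdot d^*g)$ from Definition~\ref{def:convol} by a definable change of variables on $P[2n, 0, 0]$. Consider the definable bijection
\[ \fonc{\theta}{P[2n, 0, 0]}{P[2n, 0, 0],}{(p, z, y)}{(p, z, y - z),} \]
whose inverse is $(p, x, w) \mapsto (p, x, x + w)$. Both $\theta$ and $\theta^{-1}$ are given by polynomial maps with integer coefficients, so they are definable morphisms over $P$.

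The first step is to record how $\theta$ intertwines the maps involved:
\[ \pi_1 \circ \theta = \pi_1, \qquad \pi_2 \circ \theta = d, \qquad s \circ \theta = \pi_2. \]
The last relation holds because $s(p, z, y - z) = (p, y)$. Pulling back gives
\[ \theta^*(\pi_1^*f \cdot \pi_2^*g) = \pi_1^*f \cdot d^*g. \]

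The second step is to compute the Jacobian of $\theta$ relative to $P$. It is block lower triangular with identity blocks on the diagonal, so its determinant is $1$ and the order of the Jacobian is identically $0$. The change of variables formula for motivic integration with parameters (\cite[Theorem~14.1.1]{CluckersLoeser2008} and its exponential version \cite[§7]{CluckersLoeser2010}) then says the following. For $h$ in $\Inte_s(P[2n,0,0])\expe$, the function $\theta^*h$ lies in $\Inte_{s \circ \theta}(P[2n,0,0])\expe$, and
\[ (s \circ \theta)_!(\theta^*h) = s_!(h). \]
The Jacobian factor $\LL^{-\ord\operatorname{Jac}\theta}$ equals $1$ here. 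Equivalently, one can use functoriality $(s \circ \theta)_! = s_! \circ \theta_!$ together with $\theta_!\theta^* = \id$ for an isomorphism with trivial Jacobian.

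Applying this with $h = \pi_1^*f \cdot \pi_2^*g$ concludes the argument. This $h$ is $s$-integrable by Definition~\ref{def:convol}. Since $s \circ \theta = \pi_2$, we get that $\pi_1^*f \cdot d^*g$ is $\pi_2$-integrable and
\[ \pi_{2!}(\pi_1^*f \cdot d^*g) = s_!(\pi_1^*f \cdot \pi_2^*g) = f * g. \]

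The main obstacle is bookkeeping rather than mathematics. One must check that the change of variables statement applies relative to the base $P[n,0,0]$, with $\theta$ viewed as a morphism over $P[n,0,0]$: the source structure map is $\pi_2$ and the target structure map is $s$. One must also check that integrability transfers along $\theta$. Both should follow directly from the cited theorems, because $\theta$ is a definable isomorphism over that base with Jacobian of order zero. If needed, one can verify the equality pointwise via Cluckers--Halupczok evaluation \cite[Theorem~1]{CluckersHalupczok}, where it reduces to the fiberwise substitution $x = y - z$.
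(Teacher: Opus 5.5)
Your proposal is correct and follows essentially the same route as the paper: the paper uses the same definable isomorphism $(p, z, y) \mapsto (p, z, y - z)$ with Jacobian of order zero, applies the change of variables theorem to write $\pi_1^*f \cdot \pi_2^*g$ as the pushforward of its pullback, and then concludes with the relations $s \circ h = \pi_2$, $\pi_1 \circ h = \pi_1$, $\pi_2 \circ h = d$ together with Fubini (functoriality of pushforwards). Your extra remarks on transferring integrability and on working over the base $P[n, 0, 0]$ are bookkeeping that the paper leaves implicit.
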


\begin{proof}
    Definition~\ref{def:convol} gives
    \[ f * g = s_!(\pi_1^*f \cdot \pi_2^*g). \]
    We consider the definable isomorphism
    \[ \fonc{h}{P[2n, 0, 0]}{P[2n, 0, 0],}{(p, z, y)}{(p, z, y - z).} \]
    whose order of Jacobian is zero. By change of variables theorem (see~\cite[Theorem~4.2.1]{CluckersLoeser2010}), we obtain
    \[ \pi_1^*f \cdot \pi_2^*g = h_!(h^*(\pi_1^*f \cdot \pi_2^*g)). \]
    As $s \circ h = \pi_2$, $\pi_1 \circ h = \pi_1$ and $\pi_2 \circ h = d$, by Fubini's theorem (see~\cite[Remark~4.3.2]{CluckersLoeser2010}), we deduce that
    \begin{align*}
        f * g &= (s \circ h)_!(h^*(\pi_1^*f \cdot \pi_2^*g)) \\
        &= \pi_{2!}(h^*(\pi_1^*f \cdot \pi_2^*g)) \\
        &= \pi_{2!}((\pi_1 \circ h)^*f \cdot (\pi_2 \circ h)^*g) \\
        &= \pi_{2!}(\pi_1^*f \cdot d^*g)
    \end{align*}
    which gives the desired formula.
\end{proof}

\begin{rema}
    By adopting the integral notation and evaluating functions, the formula given by Proposition~\ref{prop:convol} can be rewritten as
    \[ f * g(p, y) = \int_{h[n, 0, 0]} f(p, z)g(p, y - z) \dd z \]
    for all points $(p, y)$ of $P[n, 0, 0]$. Then this convolution product is the analogous version of the one in real analysis.
\end{rema}

\subsubsection{Schwartz--Bruhat functions}

Let $n$ be a positive integer. We recall the notion of Schwartz--Bruhat functions on $P[n, 0, 0]$ with parameter relative to $P$.

\begin{defi}[Cluckers--Loeser~{\cite[§7.5]{CluckersLoeser2010}}]\label{def:SB}
    A \emph{Schwartz--Bruhat function} on $P[n, 0, 0]$ with parameter relative to $P$ is a function $\varphi$ of $\Inte_P(P[n, 0, 0])\expe$ which satisfies the two following points:
    \begin{itemize}
        \item there exists a definable morphism $\alpha^- \colon P \to h[0, 0, 1]$ such that, for any definable morphism $\alpha \colon P \to h[0, 0, 1]$ such that $\alpha \leqslant \alpha^-$, we have
        \begin{equation}\label{eq:bounded-support}
            \varphi \cdot \cf_{\Ball_P(0, \alpha)^n} = \varphi;
        \end{equation}

        \item there exists a definable morphism $\alpha^+ \colon P \to h[0, 0, 1]$ such that, for any definable morphism $\alpha \colon P \to h[0, 0, 1]$ such that $\alpha \geqslant \alpha^+$, we have
        \begin{equation}\label{eq:locally-constant}
            \varphi * \cf_{\Ball_P(0, \alpha)^n} = \LL^{-\alpha n}\varphi.
        \end{equation}
    \end{itemize}
\end{defi}

For a Schwartz--Bruhat function $\varphi$ on $P[n, 0, 0]$, we denote by $\alpha^-(\varphi)$ (resp. $\alpha^+(\varphi)$) a definable morphism $\alpha^-$ (resp. $\alpha^+$) as in Definition~\ref{def:SB}. It is just a notation to avoid introducing these morphisms ---~such definable morphisms $\alpha^-$ and $\alpha^+$ are not unique. We denote by $\Sch_P(P[n, 0, 0])\expe$ the $\Cons(P)\expe$-module of Schwartz--Bruhat functions on~$P[n, 0, 0]$ with parameter relative to $P$.

\begin{rema}
    In Definition~\ref{def:SB}, informally, the first point ensures that the support of the function $\varphi$ is bounded and the second point ensures that the function $\varphi$ is locally constant. This corresponds closely to the $p$-adic case described above.
\end{rema}

\subsubsection{Tensor product on Schwartz--Bruhat functions}

We define a tensor product on Schwartz--Bruhat functions. Let $n_1$ and $n_2$ be two positive integers. We will use the following definable isomorphism
\[ P[n_1 + n_2, 0, 0] \cong P[n_1, 0, 0] \times P[n_2, 0, 0]. \]
Let $p_1 \colon P[n_1 + n_2, 0, 0] \to P[n_1, 0, 0]$ and $p_2 \colon P[n_1 + n_2, 0, 0] \to P[n_2, 0, 0]$ be the two canonical projections.

\begin{defi}\label{def:tensor-product}
    Let $\varphi_1$ be a function of $\Sch_P(P[n_1, 0, 0])\expe$ and $\varphi_2$ be a function of~$\Sch_P(P[n_2, 0, 0])\expe$. Their \emph{tensor product} is the function
    \begin{equation}\label{eq:tensor-product}
        \varphi_1 \otimes \varphi_2 \coloneq p_1^*\varphi_1 \cdot p_2^*\varphi_2
    \end{equation}
    of $\Inte_P(P[n_1 + n_2, 0, 0])\expe$.
\end{defi}

\begin{rema}
    The fact that the function $\varphi_1 \otimes \varphi_2$ lies in $\Inte_P(P[n_1 + n_2, 0, 0])\expe$ comes from the Fubini's theorem. Evaluating functions, this function is given by the equation
    \[ \varphi_1 \otimes \varphi_2(p, x, y) = \varphi_1(p, x)\varphi_2(p, y) \]
    for all points $(p, x, y)$ of $P[n_1 + n_2, 0, 0]$. This is the analogous notion of the tensor product of $p$-adic Schwartz--Bruhat functions as in~§\ref{sec:p}.
\end{rema}

\begin{prop}\label{prop:tenseur-SB}
    With notation of Definition~\ref{def:tensor-product}, the function $\varphi_1 \otimes \varphi_2$ is a Schwartz--Bruhat function on $P[n_1 + n_2, 0, 0]$.
\end{prop}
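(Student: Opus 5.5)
We check the two conditions of Definition~\ref{def:SB} for $\varphi \coloneq \varphi_1 \otimes \varphi_2$ directly. The natural choices are
\[ \alpha^-(\varphi) \coloneq \min(\alpha^-(\varphi_1), \alpha^-(\varphi_2)) \qquad\text{and}\qquad \alpha^+(\varphi) \coloneq \max(\alpha^+(\varphi_1), \alpha^+(\varphi_2)), \]
both of which are definable morphisms $P \to h[0, 0, 1]$. Integrability of $\varphi$ relative to $P$ is already given by the remark following Definition~\ref{def:tensor-product}.

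\textbf{Bounded support.} Let $\alpha \leqslant \alpha^-(\varphi)$. Under the isomorphism $P[n_1+n_2,0,0] \cong P[n_1,0,0] \times P[n_2,0,0]$, the ball splits as
\[ \cf_{\Ball_P(0, \alpha)^{n_1+n_2}} = p_1^*\cf_{\Ball_P(0, \alpha)^{n_1}} \cdot p_2^*\cf_{\Ball_P(0, \alpha)^{n_2}}. \]
This holds because $\ord$ of a tuple is the minimum of the coordinate orders, so the defining condition of the ball is a conjunction over the coordinates. Since pullback is a ring morphism, $\varphi \cdot \cf_{\Ball_P(0, \alpha)^{n_1+n_2}}$ equals
\[ p_1^*(\varphi_1\cf_{\Ball_P(0, \alpha)^{n_1}}) \cdot p_2^*(\varphi_2\cf_{\Ball_P(0, \alpha)^{n_2}}) = \varphi. \]
The last equality uses \eqref{eq:bounded-support} for each $\varphi_i$, which applies because $\alpha \leqslant \alpha^-(\varphi_i)$.

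\textbf{Local constancy.} Let $\alpha \geqslant \alpha^+(\varphi)$. We need the convolution to factor over the product:
\[ (p_1^*\varphi_1 \cdot p_2^*\varphi_2) * (p_1^*g_1 \cdot p_2^*g_2) = p_1^*(\varphi_1 * g_1) \cdot p_2^*(\varphi_2 * g_2), \qquad g_i \coloneq \cf_{\Ball_P(0,\alpha)^{n_i}}. \]
Informally, after evaluating, the integral over $z = (z_1, z_2)$ of $\varphi_1(p,z_1)\varphi_2(p,z_2)g_1(p,y_1-z_1)g_2(p,y_2-z_2)$ splits into a product of two integrals. To prove it formally, I would use the expression of Proposition~\ref{prop:convol}, namely $f * g = \pi_{2!}(\pi_1^*f \cdot d^*g)$. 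The space $P[2(n_1+n_2),0,0]$, with coordinates $(z_1,z_2,y_1,y_2)$, is viewed as a fiber product over $P$ of $P[2n_1,0,0]$ and $P[2n_2,0,0]$. The integrand is then a product of pullbacks of functions from these two factors. The pushforward $\pi_{2!}$ factors as the composition of the pushforwards eliminating $z_1$ and $z_2$ separately, which is Fubini's theorem \cite[Remark~4.3.2]{CluckersLoeser2010}. Finally, the projection formula (pushforward is $\Cons$-linear with respect to functions pulled back from the target) pulls out the factor that does not depend on the variable being integrated. Granting this factorization, \eqref{eq:locally-constant} for each $\varphi_i$ gives
\[ \varphi * \cf_{\Ball_P(0,\alpha)^{n_1+n_2}} = p_1^*(\LL^{-\alpha n_1}\varphi_1) \cdot p_2^*(\LL^{-\alpha n_2}\varphi_2) = \LL^{-\alpha(n_1+n_2)}\varphi, \]
since $\LL^{-\alpha n_i}$ comes from $\Cons(P)\expe$ and commutes with the pullbacks.

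\textbf{Main obstacle.} I expect the factorization of the convolution to be the hardest step. It requires rigorous bookkeeping of fiber products over $P$ and of the hypotheses in Cluckers--Loeser's Fubini and projection formulas, including the integrability conditions relative to the intermediate bases. The fallback is to compare evaluations at all points $(p, y_1, y_2)$ and conclude by \cite[Theorem~1]{CluckersHalupczok}. On evaluations, the identity reduces to Fubini for integrals over $K\ls t$ with parameters.
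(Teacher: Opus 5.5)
Your proposal is correct and is essentially the paper's proof. It makes the same choices $\min(\alpha^-(\varphi_1), \alpha^-(\varphi_2))$ and $\max(\alpha^+(\varphi_1), \alpha^+(\varphi_2))$, splits $\cf_{\Ball_P(0,\alpha)^{n_1+n_2}}$ into $p_1^*$ and $p_2^*$ factors in the same way using that pullbacks are ring morphisms, and factors the convolution into a product of two integrals, which the paper simply asserts in integral notation from the axioms of motivic integration and Fubini's theorem. If you write out your formal route, note that besides Fubini and the projection formula it also needs a base-change step: pushing forward in $z_1$ must commute with pulling back along the projection forgetting $(z_2, y_2)$, as in \cite[Theorem~1.1]{CelyRaibaut}; your evaluation fallback via \cite[Theorem~1]{CluckersHalupczok} covers this anyway.
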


\begin{proof}
    Let us show that the support of this function is bounded, namely there exists a definable morphism $\alpha \colon P \to h[0, 0, 1]$ such that
    \[ \varphi_1 \otimes \varphi_2 \cdot \cf_{\Ball_P(0, \alpha)^{n_1 + n_2}} = \varphi_1 \otimes \varphi_2. \]
    Let $\alpha \colon P \to h[0, 0, 1]$ be a definable morphism such that
    \[ \alpha \leqslant \alpha^-(\varphi_1) \qquad \text{and} \qquad
       \alpha \leqslant \alpha^-(\varphi_2). \]
    Thus, we can write these two equalities
    \[ \varphi_1 \cdot \cf_{\Ball_P(0, \alpha)^{n_1}} = \varphi_1 \qquad \text{and} \qquad
       \varphi_2 \cdot \cf_{\Ball_P(0, \alpha)^{n_2}} = \varphi_2. \]
    Since the maps $p_1^*$ and $p_2^*$ are ring morphisms, we get
    \begin{align*}
        \varphi_1 \otimes \varphi_2 &= p_1^*(\varphi_1 \cdot \cf_{\Ball_P(0, \alpha)^{n_1}})\cdot p_2^*(\varphi_2 \cdot \cf_{\Ball_P(0, \alpha)^{n_2}}) \\
        &= p_1^*\varphi_1 \cdot p_2^*\varphi_2 \cdot p_1^*\cf_{\Ball_P(0, \alpha)^{n_1}} \cdot p_2^*\cf_{\Ball_P(0, \alpha)^{n_2}} \\
        &= \varphi_1 \otimes \varphi_2 \cdot (\cf_{\Ball_P(0, \alpha)^{n_1}} \circ p_1) \cdot (\cf_{\Ball_P(0, \alpha)^{n_2}} \circ p_2) \\
        &= \varphi_1 \otimes \varphi_2 \cdot \cf_{\Ball_P(0, \alpha)^{n_1} \times_P P[n_2, 0, 0]}\cf_{P[n_1, 0, 0] \times_P \Ball_P(0, \alpha)^{n_2}} \\
        &= \varphi_1 \otimes \varphi_2 \cdot \cf_{\Ball_P(0, \alpha)^{n_1 + n_2}}.
    \end{align*}

    Let us show that this function is locally constant, that is there exists a definable morphism $\alpha \colon P \to h[0, 0, 1]$ such that
    \[ \varphi_1 \otimes \varphi_2 * \cf_{\Ball_P(0, \alpha)^{n_1 + n_2}} = \LL^{-\alpha(n_1 + n_2)}\varphi_1 \otimes \varphi_2. \]
    Let $\alpha \colon P \to h[0, 0, 1]$ be a definable morphism such that
    \[ \alpha \geqslant \alpha^+(\varphi_1) \qquad \text{and} \qquad
       \alpha \geqslant \alpha^+(\varphi_2). \]
    Thus, by adopting integral notation, we can write these two equalities
    \begin{align*}
        \varphi_1 * \cf_{\Ball_P(0, \alpha)^{n_1}}(x) = \int_{P[n_1, 0, 0]} \varphi_1(z)\cf_{\Ball_P(0, \alpha)^{n_1}}(x - z) \dd z &= \LL^{-\alpha n_1}\varphi_1(x) \qquad \text{and} \qquad \\
        \varphi_2 * \cf_{\Ball_P(0, \alpha)^{n_2}}(y) = \int_{P[n_2, 0, 0]} \varphi_2(t)\cf_{\Ball_P(0, \alpha)^{n_2}}(y - t) \dd t &= \LL^{-\alpha n_2}\varphi_2(y).
    \end{align*}
    Thanks to axioms of the motivic integral and Fubini's theorem, we get
    \begin{align*}
        &(\varphi_1 \otimes \varphi_2) * \cf_{\Ball_P(0, \alpha)^{n_1 + n_2}}(x, y) \\
        &\qquad= \int_{P[n_1 + n_2, 0, 0]} \varphi_1 \otimes \varphi_2(z, t)\cf_{\Ball_P(0, \alpha)^{n_1 + n_2}}(x - z, y - t) \dd z \dd t \\
        &\qquad= \int_{P[n_1 + n_2, 0, 0]} \varphi_1(z)\varphi_2(z)\cf_{\Ball_P(0, \alpha)^{n_1}}(x - z)\cf_{\Ball_P(0, \alpha)^{n_2}}(y - t) \dd z \dd t \\
        &\qquad= \int_{P[n_1, 0, 0]} \varphi_1(z)\cf_{\Ball_P(0, \alpha)^{n_1}}(x - z) \dd z \\
        &\qquad\qquad\qquad \cdot \int_{P[n_2, 0, 0]} \varphi_2(t)\cf_{\Ball_P(0, \alpha)^{n_2}}(y - t) \dd t \\
        &\qquad= \LL^{-\alpha n_1}\varphi_1(x)\LL^{-\alpha n_2}\varphi_2(y) \\
        &\qquad= \LL^{-\alpha(n_1 + n_2)}[p_1^*\varphi_1 \cdot p_2^*\varphi_2](x, y) \\
        &\qquad= \LL^{-\alpha(n_1 + n_2)}\varphi_1 \otimes \varphi_2(x, y).
    \end{align*}
    This concludes that the function $\varphi_1 \otimes \varphi_2$ is a Schwartz--Bruhat function.
\end{proof}

\begin{rema}
    This last proof also tells that we can take
    \begin{align*}
        \alpha^-(\varphi_1 \otimes \varphi_2) &= \min(\alpha^-(\varphi_1), \alpha^-(\varphi_2)) \qquad \text{and} \qquad \\
       \alpha^+(\varphi_1 \otimes \varphi_2) &= \max(\alpha^+(\varphi_1), \alpha^+(\varphi_2)).
    \end{align*}
\end{rema}

\begin{rema}
    With Proposition~\ref{prop:tenseur-SB}, we get a $\Cons(P)\expe$-bilinear map
    \[ - \otimes - \colon \Sch_P(P[n_1, 0, 0])\expe \times \Sch_P(P[n_2, 0, 0])\expe \to \Sch_P(P[n_1 + n_2, 0, 0])\expe. \]
    Contrary to the $p$-adic case, the induced $\Cons(P)\expe$-linear map
    \[ - \otimes - \colon \Sch_P(P[n_1, 0, 0])\expe \otimes_{\Cons(P)\expe} \Sch_P(P[n_2, 0, 0])\expe \to \Sch_P(P[n_1 + n_2, 0, 0])\expe. \]
    does not appear to be surjective. Consequently, to find a motivic Schwartz kernel theorem, we will need to take a different approach.
\end{rema}

\subsubsection{Definable distributions}

We recall the definition of motivic distributions introduced by Raibaut~\cite{Raibaut}. Let $n$ be a positive integer.

Let $W$ and $W'$ be two definable subassignments and~$g \colon W \to W'$ be a definable morphism. We set the definable morphism~$g \times \id_{h[n, 0, 0]} \colon W[n, 0, 0] \to W'[n, 0, 0]$ defined by the equality
\[ (g \times \id_{h[n, 0, 0]})(w, x) = (g(w), x) \]
for all points $(w, x)$ of $W[n, 0, 0]$.

\begin{defi}[Raibaut~{\cite[Definition~5.4]{Raibaut}}]
    A function $\varphi$ of $\Sch_W(W[n, 0, 0])\expe$ is~\emph{$g \times \id_{h[n, 0, 0]}$-convenient} if
    \begin{enumerate}
        \item it is $g \times \id_{h[n, 0, 0]}$-integrable and $(g \times \id_{h[n, 0, 0]})_!\varphi \in \Inte_{W'}(W'[n, 0, 0])\expe$;

        \item there exists a definable morphism $\beta^- \colon W' \to h[0, 0, 1]$ such that
        \[ \varphi \cdot \cf_{\Ball_W(0, \beta^- \circ g)^n} = \varphi; \]

        \item there exists a definable morphism $\beta^+ \colon W' \to h[0, 0, 1]$ such that
        \[ \varphi * \cf_{\Ball_W(0, \beta^+ \circ g)^n} = \LL^{-(\beta^+ \circ g)n}\varphi. \]
    \end{enumerate}
\end{defi}

\begin{rema}
    For any $g \times \id_{h[n_1, 0, 0]}$-convenient function $\varphi_1$ of $\Sch_W(W[n_1, 0, 0])\expe$ and any $g \times \id_{h[n_2, 0, 0]}$-convenient function $\varphi_2$ of $\Sch_W(W[n_2, 0, 0])\expe$, the function~$\varphi_1 \otimes \varphi_2$ is not necessary $g \times \id_{h[n, 0, 0]}$-convenient.
\end{rema}

Let $P$ be a definable subassignment.

\begin{defi}[Raibaut~{\cite[Definition~5.7]{Raibaut}}]\label{def:definable-distribution}
    A \emph{definable distribution} on $P[n, 0, 0]$ with parameter relative to $P$ is a collection $(u_{\Phi_W})_{\Phi_W \in \Def_P}$ such that, for each object~$\Phi_W \colon W \to P$ of $\Def_P$, the element $u_{\Phi_W}$ is a~$\Cons(W)\expe$-linear map
    \[ \fonc{u_{\Phi_W}}{\Sch_W(W[n, 0, 0])\expe}{\Cons(W)\expe,}{\varphi}{\inner{u_{\Phi_W}}{\varphi}} \]
    such that, for any objects $\Phi_W \colon W \to P$ and $\Phi_{W'} \colon W' \to P$ of $\Def_P$ and any morphism~$g \colon W \to W'$ of $\Def_P$, the following points are satisfied:
    \begin{enumerate}[label=(D\arabic*)]
        \item\label{it:pullback} for each function $\varphi$ of $\Sch_{W'}(W'[n, 0, 0])\expe$, we have
        \[ g^*\inner{u_{\Phi_{W'}}}{\varphi} = \inner{u_{\Phi_W}}{(g \times \id_{h[n, 0, 0]})^*\varphi}; \]

        \item\label{it:pushforward} for each $g \times \id_{h[n, 0, 0]}$-convenient function $\varphi$ of $\Sch_W(W[n, 0, 0])\expe$, the function~$\inner{u_{\Phi_W}}{\varphi}$ is $g$-integrable and
        \[ g_!\inner{u_{\Phi_W}}{\varphi} = \inner{u_{\Phi_{W'}}}{(g \times \id_{h[n, 0, 0]})_!\varphi}. \]
    \end{enumerate}
\end{defi}

We denote by $\Sch_P'(P[n, 0, 0])\expe$ the set of definable distributions on $P[n, 0, 0]$ with parameter relative to $P$. With~\cite[Remark~5.11]{Raibaut}, it is endowed with a structure of~$\Cons(P)\expe$-module. When $P = \{*\}$, this set will be simply denoted by $\Sch'(h[n, 0, 0])\expe$.

\begin{rema}
    In Definition~\ref{def:definable-distribution}, when we say that the morphism $g \colon W \to W'$ is a morphism of $\Def_P$, we implicitly impose that it commutes with the structural morphisms $\Phi_W$ and $\Phi_{W'}$, namely the diagram
    \begin{center}
        \begin{tikzcd}
            & W' \arrow[d, "\Phi_{W'}"] \\
            W \arrow[ur, "g"] \arrow[r, "\Phi_W"'] & P
        \end{tikzcd}
    \end{center}
    commutes. In fact, we should rather write $g \colon \Phi_W \to \Phi_{W'}$.

    Moreover, in items~\ref{it:pullback} and~\ref{it:pushforward}, the functions~$(g \times \id_{h[n, 0, 0]})^*\varphi$ and~$(g \times \id_{h[n, 0, 0]})_!\varphi$ are Schwartz--Bruhat functions by~\cite[Lemmas~5.2 \&~5.5]{Raibaut}. These two items will be respectively called the \emph{pullback condition} and the \emph{pushforward condition}.
\end{rema}

\subsection{Motivic Schwartz kernel theorem}
\label{ssec:Schwartz}

We can now state a Schwartz kernel theorem for definable distributions. Let $n_1$ and $n_2$ be two positive integers and $P$ be a definable subassignment. We set $n \coloneq n_1 + n_2$. We first introduce the notion of kernel.

\paragraph*{Notation} Let $W$ be a definable subassignment. We denote by $\Sch_W^*(W[n_1, 0, 0])\expe$ the dual of the~$\Cons(W)\expe$-module $\Sch_W(W[n_1, 0, 0])\expe$. For $i = 1, 2$, we consider the definable morphism
\[ \fonc{d_i}{W[2n_i, 0, 0]}{W[n_i, 0, 0],}{(w, z, y)}{(w, y - z)} \]
and the canonical projections
\begin{align*}
    q_i \colon W[n + n_i, 0, 0] = W[n_1 + n_2 + n_i, 0, 0] &\to W[2n_i, 0, 0] \qquad \text{and} \qquad \\
    p \colon W[n, 0, 0] &\to W.
\end{align*}

We will adopt similar notation for a definable subassignment $W'$: we add a prime on these morphisms (like $d_i'$, $q_i'$ or $p'$).

\begin{defi}
    A \emph{kernel} on $h[n_1, 0, 0] \times h[n_2, 0, 0]$ with parameter relative to $P$ is a collection $(\Ker_{\Phi_W})_{\Phi_W \in \Def_P}$ such that, for each object~$\Phi_W \colon W \to P$ of $\Def_P$, the element $\Ker_{\Phi_W}$ is a $\Cons(W)\expe$-linear map
    \[ \Ker_{\Phi_W} \colon \Sch_W(W[n_2, 0, 0])\expe \to \Sch_W^*(W[n_1, 0, 0])\expe \]
    such that, for each objects $\Phi_W \colon W \to P$ and $\Phi_{W'} \colon W' \to P$ of $\Def_P$ and each morphism~$g \colon W \to W'$ of $\Def_P$, the following points are satisfied.
    \begin{enumerate}[label=(K\arabic*)]
        \item\label{it:integrable} Let $\varphi$ be a function of $\Sch_W(W[n, 0, 0])\expe$ and~$\alpha \colon W \to h[0, 0, 1]$ be a definable morphism such that
        \[ \varphi * \cf_{\Ball_W(0, \alpha)^n} = \LL^{-\alpha n}\varphi. \]
        Then
        \begin{itemize}
            \item the function
            \[ F_{\Ker, \varphi, \alpha} \coloneq \LL^{(\alpha \circ p)n}\varphi\inner{\Ker_{\Phi_W \circ p}(d_2 \circ q_2)^*\cf_{\Ball_W(0, \alpha)^{n_2}}}{(d_1 \circ q_1)^*\cf_{\Ball_W(0, \alpha)^{n_1}}} \]
            is $p$-integrable%
                \footnote{We refer to Remark~\ref{rem:informal} for an informal interpretation of the function $F_{\Ker, \varphi, \alpha}$.};

            \item if the function $\varphi$ is $g \times \id_{h[n, 0, 0]}$-convenient, then the function $F_{\Ker, \varphi, \alpha}$ is also $g \circ p$-integrable.
        \end{itemize}

        \item\label{it:pullback-K} Let $\varphi_1$ and $\varphi_2$ be functions of $\Sch_{W'}(W'[n_1, 0, 0])\expe$ and~$\Sch_{W'}(W'[n_2, 0, 0])\expe$ respectively. Then
        \begin{equation}\label{eq:pullback-K}
            g^*\inner{\Ker_{\Phi_{W'}}\varphi_2}{\varphi_1} = \inner{\Ker_{\Phi_W}(g \times \id_{h[n_2, 0, 0]})^*\varphi_2}{(g \times \id_{h[n_1, 0, 0]})^*\varphi_1}.
        \end{equation}

        \item\label{it:pushforward-K} Let $\varphi_1$ and $\varphi_2$ be functions of $\Sch_W(W[n_1, 0, 0])\expe$ and~$\Sch_W(W[n_2, 0, 0])\expe$ respectively. If the function $\varphi_1 \otimes \varphi_2$ is~$g \times \id_{h[n, 0, 0]}$-convenient, then the function~$\inner{\Ker_{\Phi_W}\varphi_2}{\varphi_1}$ is $g$-integrable and
        \begin{equation}\label{eq:pushforward-K}
            g_!\inner{\Ker_{\Phi_W}\varphi_2}{\varphi_1} = p'_!F_{\Ker, \psi, \alpha}
        \end{equation}
        where we consider
        \begin{itemize}
            \item the Schwartz--Bruhat function
            \[ \psi \coloneq (g \times \id_{h[n, 0, 0]})_!(\varphi_1 \otimes \varphi_2), \]

            \item a definable morphism $\alpha \colon W' \to h[0, 0, 1]$ such that
            \[ \psi * \cf_{\Ball_{W'}(0, \alpha)^n} = \LL^{-\alpha n}\psi, \]

            \item the canonical projection $p' \colon W'[n, 0, 0] \to W'$ and

            \item the function
            \[ F_{\Ker, \psi, \alpha} \coloneq \LL^{(\alpha \circ p')n}\psi\inner{\Ker_{\Phi_{W'} \circ p'}(d_2' \circ q_2')^*\cf_{\Ball_{W'}(0, \alpha)^{n_2}}}{(d_1' \circ q_1')^*\cf_{\Ball_{W'}(0, \alpha)^{n_1}}}. \]
        \end{itemize}

        \item\label{it:independance-alpha-plus} Let $\varphi$ be a function of $\Sch_W(W[n, 0, 0])\expe$ and $\alpha, \beta \colon W \to h[0, 0, 1]$ be two definable morphisms such that
        \begin{align*}
            \varphi * \cf_{\Ball_W(0, \alpha)^n} &= \LL^{-\alpha n}\varphi \qquad \text{and} \qquad \\
            \varphi * \cf_{\Ball_W(0, \beta)^n} &= \LL^{-\beta n}\varphi.
        \end{align*}
        Then
        \begin{equation}\label{eq:independance-alpha-plus}
            p_!F_{\Ker, \varphi, \alpha} = p_!F_{\Ker, \varphi, \beta}
        \end{equation}
        where we consider the canonical projection $p \colon W[n, 0, 0] \to W$.
    \end{enumerate}
\end{defi}

\begin{rema}
    Hypothesis~\ref{it:integrable}--\ref{it:independance-alpha-plus} will be discussed in Remark~\ref{rem:discussion-hypothesis}.
\end{rema}

We can now state a motivic version of the $p$-adic Schwartz kernel theorem (see Theorem~\ref{thm:kernel-p}). As a first step, we prove the direct sense of such a result. Once again, we consider a definable subassignment $P$ and two positive integers $n_1$ and $n_2$. Also we set $n \coloneq n_1 + n_2$.

\begin{theo}\label{thm:kernel-mot}
    Let $u$ be a distribution of $\Sch_P'(P[n, 0, 0])\expe$. Then there exists a unique kernel $\Ker = (\Ker_{\Phi_W})_{\Phi_W \in \Def_P}$ on $h[n_1, 0, 0] \times h[n_2, 0, 0]$ such that, for any object~$\Phi_W \colon W \to P$ of $\Def_P$ and for any functions $\varphi_1$ of $\Sch_W(W[n_1, 0, 0])\expe$ and $\varphi_2$ of $\Sch_W(W[n_2, 0, 0])\expe$, the equality
    \begin{equation}\label{eq:noyau-mot}
        \inner{u_{\Phi_W}}{\varphi_1 \otimes \varphi_2} = \inner{\Ker_{\Phi_W}\varphi_2}{\varphi_1}
    \end{equation}
    holds. Moreover, for any object $\Phi_W \colon W \to P$ of $\Def_P$, for any function $\varphi$ of~$\Sch_W(W[n, 0, 0])\expe$ and for any definable morphism~$\alpha \colon W \to h[0, 0, 1]$ such that
    \[ \varphi * \cf_{\Ball_W(0, \alpha)^n} = \LL^{-\alpha n}\varphi, \]
    we have
    \begin{equation}\label{eq:rel-u-K}
        \inner{u_{\Phi_W}}{\varphi} = p_!F_{\Ker, \varphi, \alpha}
    \end{equation}
    where we consider the canonical projection $p \colon W[n, 0, 0] \to W$.
\end{theo}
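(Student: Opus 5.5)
The plan is to define the kernel directly by formula~\eqref{eq:noyau-mot}. Uniqueness is then immediate, because~\eqref{eq:noyau-mot} determines each $\Ker_{\Phi_W}\varphi_2$ as an element of $\Sch_W^*(W[n_1, 0, 0])\expe$. For existence, Proposition~\ref{prop:tenseur-SB} says that $\varphi_1 \otimes \varphi_2$ is a Schwartz--Bruhat function. So for each $\Phi_W$ and each $\varphi_2$ we set $\Ker_{\Phi_W}\varphi_2 \colon \varphi_1 \mapsto \inner{u_{\Phi_W}}{\varphi_1 \otimes \varphi_2}$. This is $\Cons(W)\expe$-linear in $\varphi_1$, and it depends $\Cons(W)\expe$-linearly on $\varphi_2$, by $\Cons(W)\expe$-bilinearity of $\otimes$ (the pullbacks $p_i^*$ are ring morphisms) and linearity of $u_{\Phi_W}$.

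The heart of the argument is the relation~\eqref{eq:rel-u-K}, and I would prove it before checking the axioms \ref{it:integrable}--\ref{it:independance-alpha-plus}. Fix $\varphi$ and $\alpha$ with $\varphi * \cf_{\Ball_W(0,\alpha)^n} = \LL^{-\alpha n}\varphi$. By Proposition~\ref{prop:convol}, this reads $\LL^{(\alpha\circ p)n}\pi_{2!}(\pi_1^*\varphi \cdot d^*\cf_{\Ball_W(0,\alpha)^n}) = \varphi$. Informally, $\varphi(y) = \LL^{\alpha n}\int \varphi(z)\cf_{\Ball(z,\alpha)}(y)\,\dd z$, which expresses $\varphi$ as a parametrized integral of characteristic functions of balls. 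These balls split as tensor products:
\[ \cf_{\Ball(z,\alpha)^n} = \cf_{\Ball(z_1,\alpha)^{n_1}} \otimes \cf_{\Ball(z_2,\alpha)^{n_2}}. \]
Now regard $z = (z_1, z_2)$ as a new parameter, so the parameter space becomes $W[n, 0, 0]$ with structural map $\Phi_W \circ p$. The functions $(d_i \circ q_i)^*\cf_{\Ball_W(0,\alpha)^{n_i}}$ are exactly these balls centred at $z_i$, viewed as Schwartz--Bruhat functions with parameter in $W[n,0,0]$. Next, multiply by $\LL^{(\alpha\circ p)n}\varphi$ in the parameter; this is allowed by $\Cons(W[n,0,0])\expe$-linearity of $u_{\Phi_W\circ p}$. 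The result is
\[ F_{\Ker,\varphi,\alpha} = \inner{u_{\Phi_W\circ p}}{\Theta}, \]
where $\Theta \coloneq \LL^{(\alpha\circ p)n}\,(p\times\id)^*\varphi \cdot (\text{ball centred at } z)$ is a function on $W[n,0,0][n,0,0]$.

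Next, apply the pushforward condition~\ref{it:pushforward} of $u$ to the morphism $p \colon W[n,0,0] \to W$. This requires checking that $\Theta$ is $p \times \id$-convenient. Its support is bounded uniformly by $\alpha^-(\varphi)$, because the center $z$ ranges over the support of $\varphi$ and the ball radius is at least the support radius. It is locally constant at scale $\alpha$, uniformly in $z$. Its pushforward along $z$ is $\varphi$ itself, by the convolution identity above. Condition~\ref{it:pushforward} then gives both the $p$-integrability of $F_{\Ker,\varphi,\alpha}$ and $p_!F_{\Ker,\varphi,\alpha} = \inner{u_{\Phi_W}}{\varphi}$, which is~\eqref{eq:rel-u-K}. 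I expect this convenience verification to be the main obstacle. One needs an explicit Fubini and change-of-variables computation, in the style of the proof of Proposition~\ref{prop:convol}, using Cluckers--Loeser's pushforward axioms, and the $\beta^\pm$ must be produced as morphisms on $W$ composed with $p$.

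With~\eqref{eq:rel-u-K} in hand, the axioms follow. Axiom~\ref{it:integrable} comes from the integrability just obtained. For its second clause, when $\varphi$ is $g\times\id$-convenient, I would repeat the argument with $g\circ p$ in place of $p$ and use the convenience of $\varphi$ to obtain the convenience of $\Theta$ relative to $g \circ p$. Axiom~\ref{it:independance-alpha-plus} holds because both sides of~\eqref{eq:independance-alpha-plus} equal $\inner{u_{\Phi_W}}{\varphi}$. Axiom~\ref{it:pullback-K} is the pullback condition~\ref{it:pullback} applied to $\varphi_1 \otimes \varphi_2$, together with the identity $(g\times\id)^*(\varphi_1\otimes\varphi_2) = (g\times\id)^*\varphi_1 \otimes (g\times\id)^*\varphi_2$, which holds since pullbacks are ring morphisms. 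For axiom~\ref{it:pushforward-K}, condition~\ref{it:pushforward} gives
\[ g_!\inner{u_{\Phi_W}}{\varphi_1\otimes\varphi_2} = \inner{u_{\Phi_{W'}}}{\psi}. \]
Then~\eqref{eq:rel-u-K} applied on $W'$ to $\psi$ and $\alpha$ turns the right-hand side into $p'_!F_{\Ker,\psi,\alpha}$, which is~\eqref{eq:pushforward-K}.
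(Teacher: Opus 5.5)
Apart from one step, your proposal is the paper's proof. $\Ker$ is defined by~\eqref{eq:noyau-mot}. The relation \eqref{eq:rel-u-K} comes from applying~\ref{it:pushforward} along $p$ to the $p\times\id_{h[n,0,0]}$-convenient function $\LL^{(\alpha\circ p)n}\pi_1^*\varphi\cdot d^*\cf_{\Ball_W(0,\alpha)^n}$. The paper only asserts the convenience that you sketch; also, your $(p\times\id)^*\varphi$ should be $\pi_1^*\varphi$, i.e.\ $\varphi$ evaluated at the parameter $z$. Conditions \ref{it:pullback-K}, \ref{it:pushforward-K}, \ref{it:independance-alpha-plus} and the first clause of \ref{it:integrable} then follow as you say.

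\textbf{The gap is in the second clause of~\ref{it:integrable}.} Convenience of $\Theta$ relative to $g\circ p$ needs some $\beta^+\colon W'\to h[0,0,1]$ such that $\Theta$ is locally constant in $y$ at scale $\beta^+\circ g\circ p$. But $\Theta(w,z,\cdot)$ is a multiple of $\cf_{\Ball(z,\alpha(w))}$, and $\alpha$ is an arbitrary morphism on $W$. The convenience of $\varphi$ controls $\varphi$, not $\alpha$, and nothing bounds $\alpha$ along the fibres of $g$. For a concrete failure, take $W=h[1,0,0]$, $W'=\{*\}$, and $\varphi(w,x)=\cf_{\Ball(0,0)}(w)\cf_{\Ball(0,0)^n}(x)$, which is convenient with $\beta^\pm=0$. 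Take $\alpha(w)=\ord w$ for $w\neq 0$ and $\alpha(0)=0$. Then $\Theta(w,z,\cdot)=\LL^{n\ord w}\cf_{\Ball(z,\ord w)}$ for $0\neq w\in\Ball(0,0)$ and $z\in\Ball(0,0)^n$. No $\beta^+$ exists, so \ref{it:pushforward} cannot be applied along $g\circ p$. Your argument does work when $\alpha=\beta\circ g$: take $\beta^+=\beta$, and get $\beta^-$ from that of $\varphi$ and from $\beta$. This is exactly the case used in the proof of Theorem~\ref{thm:kernel-mot-rec}.

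The paper handles this clause differently. It applies \ref{it:pushforward} to $\varphi$ itself along $g$, so $p_!F_{\Ker,\varphi,\alpha}=\inner{u_{\Phi_W}}{\varphi}$ is $g$-integrable, and then it invokes Fubini. Do not import this to close your gap. It deduces $g\circ p$-integrability of $F$ from $p$-integrability of $F$ and $g$-integrability of $p_!F$, which is a converse Fubini statement and is not automatic for non-positive functions.

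In fact, take the data above with $n=2$, and let $u$ be given by $\inner{u}{\phi}=\int(\phi(x_1,0)-\phi(0,0)\cf_{\Ball(0,0)}(x_1))\LL^{2\ord x_1}\dd x_1$. Then $p_!F_{\Ker,\varphi,\alpha}=0$. Yet already in the $p$-adic analogue, the integral of $\lvert F\rvert$ over $\{\ord w=k\}$ stays bounded below as $k\to\infty$, so $F$ is not integrable in $(w,z)$. For arbitrary $\alpha$ the clause therefore appears to fail. The sound version restricts to $\alpha=\beta\circ g$, and for that version your convenience argument is the right proof.
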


\begin{defi}
    With notation of Theorem~\ref{thm:kernel-mot}, the family $\Ker \coloneq (\Ker_{\Phi_W})_{\Phi_W \in \Def_P}$ is the \emph{kernel} of the distribution $u$.
\end{defi}

\begin{rema}\label{rem:informal}
    Before proving this theorem, we give some informal interpretations of the objects involved in equation~\eqref{eq:rel-u-K} and condition~\ref{it:pullback-K} using the integral notation. The function~$F_{\Ker, \varphi, \alpha}$ can be interpreted as
    \[ (w, x_1, x_2) \longmapsto \LL^{\alpha(w)n}\varphi(w, x_1, x_2)\inner{\Ker_{\Phi_W \circ p}\cf_{\Ball(0, \alpha(w))^{n_2}}(x_2 - \cdot)}{\cf_{\Ball(0, \alpha(w))^{n_1}}(x_1 - \cdot)} \]
    and equation~\eqref{eq:rel-u-K} can be rewritten as
    \begin{multline*}
        \inner{u_{\Phi_W}}{\varphi}(w) = \LL^{\alpha(w)n} \int_{h[n_1, 0, 0]} \int_{h[n_2, 0, 0]} \varphi(w, x_1, x_2) \\
        \inner{\Ker_{\Phi_W \circ p}\cf_{\Ball(0, \alpha(w))^{n_2}}(x_2 - \cdot)}{\cf_{\Ball(0, \alpha(w))^{n_1}}(x_1 - \cdot)} \dd x_1 \dd x_2.
    \end{multline*}
    This equation is like an average formula, similarly to~\cite[Proposition~5.17]{Raibaut}. Moreover, equation~\eqref{eq:pullback-K} can be interpreted as
    \[ \inner{\Ker_{\Phi_{W'}}\varphi_2}{\varphi_1}(g(w)) = \inner{\Ker_{\Phi_W}[x_2 \longmapsto \varphi_2(g(w), x_2)]}{x_1 \longmapsto \varphi_1(g(w), x_1)}. \]
\end{rema}

\begin{proof}[Proof of Theorem~\ref{thm:kernel-mot}]
    Let $\Phi_W \colon W \to P$ be an object of $\Def_P$. We define the map~$\Ker_{\Phi_W}$ by equality~\eqref{eq:noyau-mot}. Thanks to the $\Cons(W)\expe$-linearity of the map $u_{\Phi_W}$ and since pullbacks are ring morphisms, this latter map is also $\Cons(W)\expe$-linear and its image is included in the~$\Cons(W)\expe$-module $\Sch_W^*(W[n_1, 0, 0])\expe$. This proves equation~\eqref{eq:noyau-mot}. Moreover, the uniqueness comes from equality~\eqref{eq:noyau-mot}.

    \sepproof

    We start by proving equation~\eqref{eq:rel-u-K}. Let $\Phi_W \colon W \to P$ be an object of $\Def_P$. Let~$\varphi$ be a function of $\Sch_W(W[n, 0, 0])\expe$ and~$\alpha \colon W \to h[0, 0, 1]$ be a definable morphism such that
    \begin{equation}\label{eq:convol-phi}
        \varphi * \cf_{\Ball_W(0, \alpha)^n} = \LL^{-\alpha n}\varphi.
    \end{equation}
    Let us prove that the function $F_{\Ker, \varphi, \alpha}$ is $p$-integrable and
    \[ \inner{u_{\Phi_W}}{\varphi} = p_!F_{\Ker, \varphi, \alpha}. \]
    For this, we follow the proof of~\cite[Proposition~5.17]{Raibaut}. We consider the canonical projections $\pi_1, \pi_2 \colon W[2n, 0, 0] \to W[n, 0, 0]$ and the definable morphism
    \[ \fonc{d}{W[2n, 0, 0]}{W[n, 0, 0],}{(w, z, y)}{(w, y - z).} \]
    By Proposition~\ref{prop:convol} and equality~\eqref{eq:convol-phi}, since the map $u_{\Phi_W}$ is $\Cons(W)\expe$-linear, the latter equality can be rewritten as
    \[ \inner{u_{\Phi_W}}{\varphi} = \LL^{\alpha n}\inner{u_{\Phi_W}}{\pi_{2!}(\pi_1^*\varphi \cdot d^*\cf_{\Ball_W(0, \alpha)^n})} \]
    and, since $\pi_2 = p \times \id_{h[n, 0, 0]}$, we get
    \[ \inner{u_{\Phi_W}}{\varphi} = \LL^{\alpha n}\inner{u_{\Phi_W}}{(p \times \id_{h[n, 0, 0]})_!(\pi_1^*\varphi \cdot d^*\cf_{\Ball_W(0, \alpha)^n})}. \]
    The function $\pi_1^*\varphi \cdot d^*\cf_{\Ball_W(0, \alpha)^n}$ of $\Sch_{W[n, 0, 0]}(W[2n, 0, 0])\expe$ is $p \times \id_{h[n, 0, 0]}$-convenient. Then by condition~\ref{it:pushforward}, the function $\inner{u_{\Phi_W \circ p}}{\pi_1^*\varphi \cdot d^*\cf_{\Ball_W(0, \alpha)^n}}$ is $p$-integrable and
    \[ \inner{u_{\Phi_W}}{\varphi} = \LL^{\alpha n}p_!\inner{u_{\Phi_W \circ p}}{\pi_1^*\varphi \cdot d^*\cf_{\Ball_W(0, \alpha)^n}}. \]
    The $\Cons(W)\expe$-linearity of the map $u_{\Phi_W}$ implies
    \[ \inner{u_{\Phi_W}}{\varphi} = \LL^{\alpha n}p_!(\varphi\inner{u_{\Phi_W \circ p}}{d^*\cf_{\Ball_W(0, \alpha)^n}}). \]
    For $i = 1, 2$, the diagram
    \begin{center}
        \begin{tikzcd}[column sep=2cm]
            & W[2n, 0, 0] \arrow[dl, to path=-| (\tikztotarget), rounded corners] \arrow[d] \arrow[r, "d"] & W[n, 0, 0] \arrow[d] \\
            W[n + n_i, 0, 0] \arrow[r, "q_i"] & W[2n_i, 0, 0] \arrow[r, "d_i"] & W[n_i, 0, 0]
        \end{tikzcd}
    \end{center}
    commutes where the morphism $q_i$ and the nameless arrows are the canonical projections. As pullbacks are ring morphisms, by definition~\eqref{eq:tensor-product}, we have
    \[ d^*\cf_{\Ball_W(0, \alpha)^n} = (d_1 \circ q_1)^*\cf_{\Ball_W(0, \alpha)^{n_1}} \otimes (d_2 \circ q_2)^*\cf_{\Ball_W(0, \alpha)^{n_2}}. \]
    With equality~\eqref{eq:noyau-mot} and the latter equation, we deduce that the function
    \begin{align*}
        F_{\Ker, \varphi, \alpha} &= \LL^{(\alpha \circ p)n}\varphi\inner{\Ker_{\Phi_W \circ p}(d_2 \circ q_2)^*\cf_{\Ball_W(0, \alpha)^{n_2}}}{(d_1 \circ q_1)^*\cf_{\Ball_W(0, \alpha)^{n_1}}} \\
        &= \LL^{(\alpha \circ p)n}\varphi\inner{u_{\Phi_W \circ p}}{(d_1 \circ q_1)^*\cf_{\Ball_W(0, \alpha)^{n_1}} \otimes (d_2 \circ q_2)^*\cf_{\Ball_W(0, \alpha)^{n_2}}} \\
        &= \LL^{(\alpha \circ p)n}\varphi\inner{u_{\Phi_W \circ p}}{d^*\cf_{\Ball_W(0, \alpha)^n}}
    \end{align*}
    is $p$-integrable and, thanks to the projection axiom (see~\cite[Theorem~4.1.1, Axiom~(A3)]{CluckersLoeser2010}), we get
    \begin{align*}
        \inner{u_{\Phi_W}}{\varphi} &= p_!F_{\Ker, \varphi, \alpha}
    \end{align*}
    which is equality~\eqref{eq:rel-u-K}.

    \sepproof

    We prove that the family $\Ker = (\Ker_{\Phi_W})_{\Phi_W \in \Def_P}$ is a kernel on $h[n_1, 0, 0] \times h[n_2, 0, 0]$. Let~$\Phi_W \colon W \to P$ and~$\Phi_{W'} \colon W' \to P$ be two objects and~$g \colon W \to W'$ be a morphism of~$\Def_P$.

    \subproof{Point~\ref{it:integrable}}
    Let $\varphi$ be a function of $\Sch_W(W[n, 0, 0])\expe$ and~$\alpha \colon W \to h[0, 0, 1]$ be a definable morphism such that
    \[ \varphi * \cf_{\Ball_W(0, \alpha)^n} = \LL^{-\alpha n}\varphi. \]
    Thanks to equation~\eqref{eq:rel-u-K}, we already seen that the function $F_{\Ker, \varphi, \alpha}$ is $p$-integrable. Furthermore, we assume that the function $\varphi$ is $g \times \id_{h[n, 0, 0]}$-convenient. By condition~\ref{it:pushforward}, the function $\inner{u_{\Phi_W}}{\varphi}$ is $g$-integrable. Consequently, by equality~\eqref{eq:rel-u-K}, the function~$p_!F_{\Ker, \varphi, \alpha}$ is also~$g$-integrable. By Fubini's theorem, we deduce that the function $F_{\Ker, \varphi, \alpha}$ is $g \circ p$-integrable.

    \subproof{Point~\ref{it:pullback-K}}
    Let $\varphi_1$ be a function of $\Sch_{W'}(W'[n_1, 0, 0])\expe$ and $\varphi_2$ be a function of~$\Sch_{W'}(W'[n_2, 0, 0])\expe$. Let us prove the equality
    \begin{equation}\label{eq:pullback-K-2}
        g^*\inner{\Ker_{\Phi_{W'}}\varphi_2}{\varphi_1} = \inner{\Ker_{\Phi_W}(g \times \id_{h[n_2, 0, 0]})^*\varphi_2}{(g \times \id_{h[n_1, 0, 0]})^*\varphi_1}.
    \end{equation}
    We denote by
    \begin{alignat*}{2}
        p_1 \colon &W[n, 0, 0] \to W[n_1, 0, 0], \qquad &p'_1 \colon &W'[n, 0, 0] \to W'[n_1, 0, 0], \\
        p_2 \colon &W[n, 0, 0] \to W[n_2, 0, 0], \qquad &p'_2 \colon &W'[n, 0, 0] \to W'[n_2, 0, 0].
    \end{alignat*}
    the canonical projections. For $i = 1, 2$, the diagram
    \begin{center}
        \begin{tikzcd}[column sep=3cm]
            W[n, 0, 0] \arrow[r, "g \times \id_{h[n, 0, 0]}"] \arrow[d, "p_i"'] & W'[n, 0, 0] \arrow[d, "p_i'"] \\
            W[n_i, 0, 0] \arrow[r, "g \times \id_{h[n_i, 0, 0]}"] & W'[n_i, 0, 0]
        \end{tikzcd}
    \end{center}
    commutes. As pullbacks are ring morphisms, with equality~\eqref{eq:noyau-mot}, we can write
    \begin{align*}
        g^*\inner{\Ker_{\Phi_{W'}}\varphi_2}{\varphi_1} &= g^*\inner{u_{\Phi_{W'}}}{\varphi_1 \otimes \varphi_2} \\
        &= \inner{u_{\Phi_W}}{(g \times \id_{h[n, 0, 0]})^*(\varphi_1 \otimes \varphi_2)} \\
        &= \inner{u_{\Phi_W}}{(g \times \id_{h[n, 0, 0]})^*(p_1'^*\varphi_1 \cdot p_2'^*\varphi_2)} \\
        &= \inner{u_{\Phi_W}}{(p_1' \circ (g \times \id_{h[n, 0, 0]}))^*\varphi_1 \cdot (p_2' \circ (g \times \id_{h[n, 0, 0]}))^*\varphi_2} \\
        &= \inner{u_{\Phi_W}}{((g \times \id_{h[n_1, 0, 0]}) \circ p_1)^*\varphi_1 \cdot ((g \times \id_{h[n_2, 0, 0]}) \circ p_2)^*\varphi_2} \\
        &= \inner{u_{\Phi_W}}{p_1^*((g \times \id_{h[n_1, 0, 0]})^*\varphi_1) \cdot p_2^*((g \times \id_{h[n_2, 0, 0]})^*\varphi_2)} \\
        &= \inner{u_{\Phi_W}}{(g \times \id_{h[n_1, 0, 0]})^*\varphi_1 \otimes (g \times \id_{h[n_2, 0, 0]})^*\varphi_2} \\
        &= \inner{\Ker_{\Phi_W}(g \times \id_{h[n_2, 0, 0]})^*\varphi_2}{(g \times \id_{h[n_1, 0, 0]})^*\varphi_1}
    \end{align*}
    which proves equality~\eqref{eq:pullback-K-2}.

    \subproof{Point~\ref{it:pushforward-K}}
    Let $\varphi_1$ be a function of $\Sch_W(W[n_1, 0, 0])\expe$ and let~$\varphi_2$ be a function of~$\Sch_W(W[n_2, 0, 0])\expe$. We assume that the function $\varphi_1 \otimes \varphi_2$ is~$g \times \id_{h[n, 0, 0]}$-convenient. We take the notation of condition~\ref{it:pushforward-K}. Let us prove that the function~$\inner{\Ker_{\Phi_W}\varphi_2}{\varphi_1}$ is~$g$-integrable and
    \[ g_!\inner{\Ker_{\Phi_W}\varphi_2}{\varphi_1} = p'_!F_{\Ker, \psi, \alpha} \]
    where $\psi \coloneq (g \times \id_{h[n, 0, 0]})_!(\varphi_1 \otimes \varphi_2)$. Since the function $\varphi_1 \otimes \varphi_2$ is~$g \times \id_{h[n, 0, 0]}$-convenient, by equation~\eqref{eq:noyau-mot}, the pushforward condition~\ref{it:pushforward} implies that the function
    \[ \inner{\Ker_{\Phi_W}\varphi_2}{\varphi_1} = \inner{u_{\Phi_W}}{\varphi_1 \otimes \varphi_2} \]
    is $g$-integrable and
    \[ g_!\inner{\Ker_{\Phi_W}\varphi_2}{\varphi_1} = \inner{u_{\Phi_{W'}}}{\psi}. \]
    Thus, relation~\eqref{eq:pushforward-K} follows from equation~\eqref{eq:rel-u-K}.

    \subproof{Point~\ref{it:independance-alpha-plus}}
    This point follows immediately from equality~\eqref{eq:rel-u-K}.

    \smallbreak

    This concludes the proof of the fact that the family $\Ker = (\Ker_{\Phi_W})_{\Phi_W \in \Def_P}$ is a kernel on $h[n_1, 0, 0] \times h[n_2, 0, 0]$.
\end{proof}

We also have the reciprocal of Theorem~\ref{thm:kernel-mot}.

\begin{theo}\label{thm:kernel-mot-rec}
    Let $\Ker = (\Ker_{\Phi_W})_{\Phi_W \in \Def_P}$ be a kernel on $h[n_1, 0, 0] \times h[n_2, 0, 0]$. Then there exists a unique distribution~$u$ of $\Sch_P'(P[n, 0, 0])\expe$ such that, for each object $\Phi_W \colon W \to P$ of~$\Def_P$ and all functions $\varphi_1$ of $\Sch_W(W[n_1, 0, 0])\expe$ and $\varphi_2$ of~$\Sch_W(W[n_2, 0, 0])\expe$, the equality
    \begin{equation}\label{eq:noyau-mot-rec}
        \inner{u_{\Phi_W}}{\varphi_1 \otimes \varphi_2} = \inner{\Ker_{\Phi_W}\varphi_2}{\varphi_1}
    \end{equation}
    holds. Moreover, for any object $\Phi_W \colon W \to P$ of $\Def_P$, for any function $\varphi$ of~$\Sch_W(W[n, 0, 0])\expe$ and for any definable morphism~$\alpha \colon W \to h[0, 0, 1]$ such that
    \[ \varphi * \cf_{\Ball_W(0, \alpha)^n} = \LL^{-\alpha n}\varphi, \]
    we have
    \begin{equation}\label{eq:rel-u-K-rec}
        \inner{u_{\Phi_W}}{\varphi} = p_!F_{\Ker, \varphi, \alpha}
    \end{equation}
    where we consider the canonical projection $p \colon W[n, 0, 0] \to W$.
\end{theo}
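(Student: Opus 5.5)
The natural approach is to \emph{define} the distribution by the averaging formula~\eqref{eq:rel-u-K-rec}. For each object $\Phi_W \colon W \to P$ of $\Def_P$ and each $\varphi$ in $\Sch_W(W[n, 0, 0])\expe$, choose $\alpha \coloneq \alpha^+(\varphi)$ and set
\[ \inner{u_{\Phi_W}}{\varphi} \coloneq p_!F_{\Ker, \varphi, \alpha}. \]
Condition~\ref{it:integrable} makes this pushforward meaningful, since $F_{\Ker, \varphi, \alpha}$ is $p$-integrable. Condition~\ref{it:independance-alpha-plus} makes it independent of $\alpha$, so that equation~\eqref{eq:rel-u-K-rec} holds for every admissible $\alpha$. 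Uniqueness follows at once from the same formula. Indeed, if $u$ is any distribution satisfying~\eqref{eq:noyau-mot-rec}, then the argument in the proof of Theorem~\ref{thm:kernel-mot} shows that $\inner{u_{\Phi_W}}{\varphi} = p_!F_{\Ker, \varphi, \alpha}$. That argument uses only~\eqref{eq:noyau-mot}, the pushforward condition and the projection axiom, and never the fact that $\Ker$ comes from $u$. So $u$ is determined by $\Ker$.

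Next I would check $\Cons(W)\expe$-linearity. Given $\varphi, \varphi'$ and $c \in \Cons(W)\expe$, take $\alpha \geqslant \max(\alpha^+(\varphi), \alpha^+(\varphi'))$. This $\alpha$ also works for $c\varphi + \varphi'$, because convolution with $\cf_{\Ball_W(0,\alpha)^n}$ is $\Cons(W)\expe$-linear. Moreover $F_{\Ker,\cdot,\alpha}$ is linear in $\varphi$ (it is $\varphi$ times a fixed function), and $p_!$ is linear with the projection formula giving $p_!(p^*c \cdot F) = c\, p_!F$. For equation~\eqref{eq:noyau-mot-rec}, take $\varphi = \varphi_1 \otimes \varphi_2$ with $\alpha \geqslant \max(\alpha^+(\varphi_1),\alpha^+(\varphi_2))$, as in the remark after Proposition~\ref{prop:tenseur-SB}. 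One must show that $\LL^{\alpha n}p_!\bigl(\varphi_1\otimes\varphi_2\,\inner{\Ker_{\Phi_W\circ p}(d_2\circ q_2)^*\cf}{(d_1\circ q_1)^*\cf}\bigr)$ equals $\inner{\Ker_{\Phi_W}\varphi_2}{\varphi_1}$. I would use the pushforward condition~\ref{it:pushforward-K} backwards, applied with $g = p \colon W[n,0,0] \to W$ to the pair of functions $\pi_1^*\varphi_1\cdot d_1^*\cf$ and $\pi_2^*\varphi_2 \cdot d_2^*\cf$ on $W[n,0,0][n_i,0,0]$. Their tensor product $\pi_1^*(\varphi_1\otimes\varphi_2)\cdot d^*\cf$ is $p\times\id$-convenient, as noted in the proof of Theorem~\ref{thm:kernel-mot}, and its pushforward is $\LL^{-\alpha n}\varphi_1\otimes\varphi_2$ by Proposition~\ref{prop:convol}. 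The right-hand side of~\eqref{eq:pushforward-K} then gives $u_{\Phi_W}(\varphi_1\otimes\varphi_2)$. The left-hand side is handled with linearity of $\Ker$ and the product decomposition of $d^*\cf$, which reduces to $\inner{\Ker_{\Phi_W}\varphi_2}{\varphi_1}$ via the one-variable averaging $\varphi_i * \cf = \LL^{-\alpha n_i}\varphi_i$. Making this step rigorous requires care with the scalars $\varphi_i$ pulled back from $W[n,0,0]$, which sit inside the kernel's arguments. I would use $\Cons$-linearity of $\Ker_{\Phi_W\circ p}$ over $W[n,0,0]$ to extract them.

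The axioms of a distribution remain. For~\ref{it:pullback}, given $g\colon W\to W'$ and $\varphi$ on $W'$, set $\tilde g \coloneq g\times\id_{h[n,0,0]}$. Pick $\alpha'$ for $\varphi$ and use $\alpha = \alpha'\circ g$ for $\tilde g^*\varphi$; pullback commutes with convolution by balls, since evaluation is pointwise. Then
\[ g^*p'_!F_{\Ker,\varphi,\alpha'} = p_!\,\tilde g^*F_{\Ker,\varphi,\alpha'}, \]
by base change for the cartesian square formed by $p, p'$. Finally $\tilde g^*F_{\Ker,\varphi,\alpha'} = F_{\Ker,\tilde g^*\varphi,\alpha}$ by condition~\ref{it:pullback-K} applied to the morphism $\tilde g$ over $P$. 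For~\ref{it:pushforward}, take $\varphi$ that is $g\times\id$-convenient and $\psi \coloneq (g\times\id)_!\varphi$, with $\alpha = \beta^+\circ g$ coming from convenience. Condition~\ref{it:integrable} gives $g\circ p$-integrability of $F_{\Ker,\varphi,\alpha}$, hence $g$-integrability of $\inner{u_{\Phi_W}}{\varphi}$ by Fubini. One then needs $g_!p_!F_{\Ker,\varphi,\alpha} = p'_!F_{\Ker,\psi,\alpha'}$. I expect this to be the main obstacle, because~\ref{it:pushforward-K} is formulated only for tensor products $\varphi_1\otimes\varphi_2$, and, as remarked, general Schwartz--Bruhat functions are not sums of those. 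I would first try to rewrite $F_{\Ker,\varphi,\alpha}$ after pulling back to $W[n,0,0]$, where $\varphi$ becomes a scalar. There the kernel is evaluated on genuine tensor products $(d_1\circ q_1)^*\cf\otimes(d_2\circ q_2)^*\cf$. One then applies~\ref{it:pushforward-K} to the morphism $g\times\id\colon W[n,0,0]\to W'[n,0,0]$, together with the projection formula to move the scalar $\varphi$ outside, and finally Fubini for $g\circ p = p'\circ(g\times\id)$.
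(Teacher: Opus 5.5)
Your definition $\inner{u_{\Phi_W}}{\varphi}=p_!F_{\Ker,\varphi,\alpha}$ (using \ref{it:integrable} and \ref{it:independance-alpha-plus}) is fine, as are your linearity, uniqueness and pullback arguments; they agree with the paper. The genuine gap is in your proof of \eqref{eq:noyau-mot-rec}. Applying \ref{it:pushforward-K} with $g=p\colon W[n,0,0]\to W$ to $\chi_i=\pi_i^*\varphi_i\cdot d_i^*\cf$ only produces a tautology. Once you extract the factors $\varphi_i(w,z_i)$ by $\Cons(W[n,0,0])\expe$-linearity of $\Ker_{\Phi_W\circ p}$, as you intend, the left-hand side $p_!\inner{\Ker_{\Phi_W\circ p}\chi_2}{\chi_1}$ becomes $\LL^{-\alpha n}p_!F_{\Ker,\varphi,\alpha}$. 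That is literally the right-hand side $p_!F_{\Ker,\LL^{-\alpha n}\varphi,\alpha}$. The function $\inner{\Ker_{\Phi_W}\varphi_2}{\varphi_1}$ never appears. Your claimed reduction of the left-hand side to it ``via one-variable averaging'' would require commuting $p_!$ with the arguments of the kernel, and no axiom allows this. Indeed $\chi_1$ is constant in $z_2$, so it is not even integrable along $p\times\id_{h[n_1,0,0]}$. What you are missing is that \ref{it:pushforward-K} with $W'=W$ and $g=\id_W$ is already the required identity. The function $\varphi_1\otimes\varphi_2$ is trivially $\id_{W[n,0,0]}$-convenient and $\psi=\varphi_1\otimes\varphi_2$, so the condition reads $\inner{\Ker_{\Phi_W}\varphi_2}{\varphi_1}=p_!F_{\Ker,\varphi_1\otimes\varphi_2,\alpha}=\inner{u_{\Phi_W}}{\varphi_1\otimes\varphi_2}$.

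Your plan for the pushforward condition \ref{it:pushforward} also does not work as set up, for two reasons. First, for $\alpha=\alpha'\circ g$ the tensor product $(d_1\circ q_1)^*\cf\otimes(d_2\circ q_2)^*\cf=d^*\cf_{\Ball_W(0,\alpha)^n}$ is constant along the fibres of $g$, which may have infinite volume. So it is not convenient for $(g\times\id_{h[n,0,0]})\times\id_{h[n,0,0]}$ in general, and \ref{it:pushforward-K} cannot be applied to it. Second, the projection formula cannot take $\varphi$ out of $(g\times\id_{h[n,0,0]})_!$, because $\varphi$ is not a pullback from $W'[n,0,0]$. The factor that \emph{is} a pullback is the kernel term. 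The function $(d_i\circ q_i)^*\cf_{\Ball_W(0,\alpha'\circ g)^{n_i}}$ is the pullback along $g\times\id_{h[n+n_i,0,0]}$ of $(d_i'\circ q_i')^*\cf_{\Ball_{W'}(0,\alpha')^{n_i}}$. Writing $F_{\Ker,\psi,\alpha'}=\psi G$, condition \ref{it:pullback-K} applied to the morphism $g\times\id_{h[n,0,0]}$ of $\Def_P$ then gives $F_{\Ker,\varphi,\alpha}=\varphi\cdot(g\times\id_{h[n,0,0]})^*G$. The projection formula yields $(g\times\id_{h[n,0,0]})_!F_{\Ker,\varphi,\alpha}=\psi G=F_{\Ker,\psi,\alpha'}$. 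Fubini for $g\circ p=p'\circ(g\times\id_{h[n,0,0]})$ then concludes. This is the paper's route, and in it \ref{it:pushforward-K} is only ever used for identity morphisms.
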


\begin{proof}
    Let $\Phi_W \colon W \to P$ be an object of $\Def_P$. For each function~$\varphi$ of~$\Sch_W(W[n, 0, 0])\expe$, we set $\inner{u_{\Phi_W}}{\varphi}$ as in equation~\eqref{eq:rel-u-K-rec}, that is
    \begin{equation}\label{eq:def-u}
        \inner{u_{\Phi_W}}{\varphi} \coloneq p_!(\LL^{(\alpha \circ p)n}\varphi\inner{\Ker_{\Phi_W \circ p}(d_2 \circ q_2)^*\cf_{\Ball_W(0, \alpha)^{n_2}}}{(d_1 \circ q_1)^*\cf_{\Ball_W(0, \alpha)^{n_1}}})
    \end{equation}
    for some definable morphism $\alpha \colon W \to h[0, 0, 1]$ such that $\varphi * \cf_{\Ball_W(0, \alpha)^n} = \LL^{-\alpha n}\varphi$. By condition~\ref{it:independance-alpha-plus}, this definition does not depend of the choice of the definable morphism~$\alpha$. We thus obtain a map
    \[ u_{\Phi_W} \colon \Sch_W(W[n, 0, 0])\expe \to \Cons(W)\expe. \]

    Let us show that the family $(u_{\Phi_W})_{W \in \Def_P}$ is a distribution which verifies equation~\eqref{eq:noyau-mot-rec}. Let $\Phi_W \colon W \to P$ and~$\Phi_{W'} \colon W' \to P$ be two objects and $g \colon W \to W'$ be a morphism of~$\Def_P$.

    \subproof{Equation~(\ref{eq:noyau-mot-rec})}
    Let $\varphi_1$ be a function of $\Sch_W(W[n_1, 0, 0])\expe$ and $\varphi_2$ be a function of $\Sch_W(W[n_2, 0, 0])\expe$. We want to prove the equation
    \[ \inner{u_{\Phi_W}}{\varphi_1 \otimes \varphi_2} = \inner{\Ker_{\Phi_W}\varphi_2}{\varphi_1}. \]
    We consider the Schwartz--Bruhat function $\varphi \coloneq \varphi_1 \otimes \varphi_2$. Let $\alpha \colon W \to h[0, 0, 1]$ be a definable morphism such that $\varphi * \cf_{\Ball_W(0, \alpha)^n} = \LL^{-\alpha n}\varphi$. By definition~\eqref{eq:def-u} and the projection axiom, we can write
    \begin{equation}\label{eq:def-u-2}
        \inner{u_{\Phi_W}}{\varphi} = p_!F_{\Ker, \varphi, \alpha}.
    \end{equation}
    We now use condition~\ref{it:pushforward-K} with $W = W'$ and $g = \id_W$. By definition, since
    \[ \id_W \times \id_{h[n, 0, 0]} = \id_{W[n, 0, 0]} \qquad \text{and} \qquad
       \id_{W[n, 0, 0]!} = \id_{\Cons(W{n, 0, 0})\expe}, \]
    the function $\varphi$ is $\id_W \times \id_{h[n, 0, 0]}$-convenient. Since $\id_{W!} = \id_{\Cons(W)\expe}$, we thus get
    \begin{equation}\label{eq:pushforward-K-id}
        \inner{\Ker_{\Phi_W}\varphi_2}{\varphi_1} = p_!F_{\Ker, \varphi, \alpha}.
    \end{equation}
    Thus equations~\eqref{eq:def-u-2} and~\eqref{eq:pushforward-K-id} lead to equation~\eqref{eq:noyau-mot-rec}.

    \subproof{Linearity}\label{proof:linearity}
    Let us prove that the map $u_{\Phi_W}$ is $\Cons(W)\expe$-linear. Let $\varphi$ and $\chi$ be two functions of~$\Sch_W(W[n, 0, 0])\expe$ and $\ell$ and $m$ be two functions of $\Cons(W)\expe$. Taking the maximum, we can assume that~$\alpha \coloneq \alpha^+(\varphi) = \alpha^+(\chi)$. Then the function $\ell \cdot \varphi + m \cdot \chi$ is a Schwartz--Bruhat function for which we can choose $\alpha^+(\ell \cdot \varphi + m \cdot \chi) = \alpha$. Recall that the $\Cons(W)\expe$-module structure on the set $\Sch_W(W[n, 0, 0])\expe$ gives
    \[ \ell \cdot \varphi + m \cdot \chi = p^*(\ell)\varphi + p^*(m)\chi. \]
    Then by the projection axiom and equality~\eqref{eq:def-u}, since the map $p_!$ is a group morphism, we obtain
    \begin{align*}
        &\inner{u_{\Phi_W}}{\ell \cdot \varphi + m \cdot \chi} 
        = \ell\inner{u_{\Phi_W}}{\varphi} + m\inner{u_{\Phi_W}}{\chi}.
    \end{align*}
    Thus, the map $u_{\Phi_W}$ is $\Cons(W)\expe$-linear.

    \subproof{Pullback condition~\ref{it:pullback}}
    Let $\varphi$ be a function of $\Sch_{W'}(W'[n, 0, 0])\expe$. Let us prove the equality
    \begin{equation}\label{eq:pullback-2}
        g^*\inner{u_{\Phi_{W'}}}{\varphi} = \inner{u_{\Phi_W}}{\psi}
    \end{equation}
    where $\psi \coloneq (g \times \id_{h[n, 0, 0]})^*\varphi$. By~\cite[Lemma~5.2]{Raibaut}, the function $\psi$ is a Schwartz--Bruhat function for which we can choose $\alpha^+(\psi) = \alpha^+(\varphi) \circ g$. By equality~\eqref{eq:def-u} and the projection axiom, the terms in equality~\eqref{eq:pullback-2} are
    \begin{align*}
        g^*\inner{u_{\Phi_{W'}}}{\varphi} &= g^*(\LL^{\alpha^+(\varphi)n}p'_!(\varphi\inner{\Ker_{\Phi_{W'} \circ p'}\tilde\varphi_2}{\tilde\varphi_1})) \\
        &= \LL^{\alpha^+(\psi)n}g^*p'_!(\varphi\inner{\Ker_{\Phi_{W'} \circ p'}\tilde\varphi_2}{\tilde\varphi_1})
    \end{align*}
    and
    \[ \inner{u_{\Phi_W}}{\psi} = \LL^{\alpha^+(\psi)n}p_!(\psi\inner{\Ker_{\Phi_W \circ p}\tilde\psi_2}{\tilde\psi_1}) \]
    with $\tilde\varphi_i \coloneq (d_i' \circ q_i')^*\cf_{\Ball_{W'}(0, \alpha^+(\varphi))^{n_i}}$ and $\tilde\psi_i \coloneq (d_i \circ q_i)^*\cf_{\Ball_W(0, \alpha^+(\psi))^{n_i}}$ for $i = 1, 2$. Then equality~\eqref{eq:pullback-2} is equivalent to the equality
    \begin{equation}\label{eq:pullback-3}
        p_!(\psi\inner{\Ker_{\Phi_W \circ p}\tilde\psi_2}{\tilde\psi_1}) = g^*p'_!(\varphi\inner{\Ker_{\Phi_{W'} \circ p'}\tilde\varphi_2}{\tilde\varphi_1}).
    \end{equation}

    Let us rewrite the left-hand side of equation~\eqref{eq:pullback-3}. Let $i$ be the index $1$ or $2$. We have the commutative diagram
    \begin{center}
        \begin{tikzcd}[column sep=3cm]
            W[n + n_i, 0, 0] \arrow[r, "g \times \id_{h[n + n_i, 0, 0]}"] \arrow[d, "d_i \circ q_i"'] & W'[n + n_i, 0, 0] \arrow[d, "d_i' \circ q_i'"] \\
            W[n_i, 0, 0] \arrow[r, "g \times \id_{h[n_i, 0, 0]}"] & W'[n_i, 0, 0]
        \end{tikzcd}
    \end{center}
    and, by definition of the function $\tilde\varphi_i$, we write
    \begin{align*}
        (g \times \id_{h[n + n_i, 0, 0]})^*\tilde\varphi_i &= (d_i' \circ q_i' \circ (g \times \id_{h[n + n_i, 0, 0]}))^*\cf_{\Ball_{W'}(0, \alpha^+(\varphi))^{n_i}} \\
        &= ((g \times \id_{h[n_i, 0, 0]}) \circ d_i \circ q_i)^*\cf_{\Ball_{W'}(0, \alpha^+(\varphi))^{n_i}} \\
        &= (d_i \circ q_i)^*\cf_{(g \times \id_{h[n_i, 0, 0]})^{-1}\Ball_{W'}(0, \alpha^+(\varphi))^{n_i}}.
    \end{align*}
    But we have the equality
    \[ (g \times \id_{h[n_i, 0, 0]})^{-1}\Ball_{W'}(0, \alpha^+(\varphi))^{n_i} = \Ball_W(0, \alpha^+(\varphi) \circ g)^{n_i} \]
    since, for all points $(p, x)$ of $W[n_i, 0, 0]$, we find
    \begin{align*}
        (p, x) \in (g \times \id_{h[n_i, 0, 0]})^{-1}\Ball_{W'}(0, \alpha^+(\varphi))^{n_i} &\iff (g(p), x) \in \Ball_{W'}(0, \alpha^+(\varphi))^{n_i} \\
        &\iff \ord x \geqslant \alpha^+(\varphi)(g(p)) \\
        &\iff \ord x \geqslant \alpha^+(\varphi) \circ g(p) \\
        &\iff (p, x) \in \Ball_W(0, \alpha^+(\varphi) \circ g)^{n_i}.
    \end{align*}
    As $\alpha^+(\psi) = \alpha^+(\varphi) \circ g$, we thus get
    \[ (g \times \id_{h[n + n_i, 0, 0]})^*\tilde\varphi_i = \tilde\psi_i. \]
    Then by equation~\eqref{eq:pullback-K} of condition~\ref{it:pullback-K}, the left-hand side of equation~\eqref{eq:pullback-3} can be rewritten as
    \[ p_!(\psi\inner{\Ker_{\Phi_W \circ p}\tilde\psi_2}{\tilde\psi_1}) = p_!(\psi(g \times \id_{h[n, 0, 0]})^*\inner{\Ker_{\Phi_{W'} \circ p'}\tilde\varphi_2}{\tilde\varphi_1}). \]

    We set
    \[ \tilde\varphi \coloneq \varphi\inner{\Ker_{\Phi_{W'} \circ p'}\tilde\varphi_2}{\tilde\varphi_1}. \]
    Then equality~\eqref{eq:pullback-3} is equivalent to the equality
    \[ p_!(\psi(g \times \id_{h[n, 0, 0]})^*\inner{\Ker_{\Phi_{W'} \circ p'}\tilde\varphi_2}{\tilde\varphi_1}) = g^*p'_!\tilde\varphi, \]
    that is
    \[ p_!(g \times \id_{h[n, 0, 0]})^*\tilde\varphi = g^*p'_!\tilde\varphi. \]
    But this relation follows from~\cite[Theorem~1.1]{CelyRaibaut} or equivalently from~\cite[Corollary~3.6.6]{CluckersHalupczok}. This concludes equality~\eqref{eq:pullback-2}.

    \subproof{Pushforward condition~\ref{it:pushforward}}
    Let $\varphi$ be a $g \times \id_{h[n, 0, 0]}$-convenient function of~$\Sch_W(W[n, 0, 0])\expe$. Let us show that the function $\inner{u_{\Phi_W}}{\varphi}$ is $g$-integrable and
    \begin{equation}\label{eq:pushforward-2}
        g_!\inner{u_{\Phi_W}}{\varphi} = \inner{u_{\Phi_{W'}}}{\psi}
    \end{equation}
    with $\psi \coloneq (g \times \id_{h[n, 0, 0]})_!\varphi$. By~\cite[Lemma~5.5]{Raibaut}, the function $\psi$ is a Schwartz--Bruhat function for which we can choose the morphism $\alpha^+(\psi)$ as~$\alpha^+(\varphi) = \alpha^+(\psi) \circ g$. By equality~\eqref{eq:def-u}, we then have
    \[ \inner{u_{\Phi_W}}{\varphi} = p_!F_{\Ker, \varphi, \alpha^+(\varphi)}. \]
    Since the function $\varphi$ is $g \times \id_{h[n, 0, 0]}$-convenient, by condition~\ref{it:integrable}, the function~$F_{\Ker, \varphi, \alpha^+(\varphi)}$ is $g \circ p$-integrable. We deduce, by Fubini's theorem, that the function~$\inner{u_{\Phi_W}}{\varphi}$ is~$g$-integrable.

    Let us prove equality~\eqref{eq:pushforward-2}. By equality~\eqref{eq:def-u}, the terms in equality~\eqref{eq:pushforward-2} are
    \[ g_!\inner{u_{\Phi_W}}{\varphi} = g_!(\LL^{\alpha^+(\varphi)n}p_!(\varphi\inner{\Ker_{\Phi_W \circ p}\tilde\varphi_2}{\tilde\varphi_1})) \]
    and
    \[ \inner{u_{\Phi_{W'}}}{\psi} = \LL^{\alpha^+(\psi)n}p'_!(\psi\inner{\Ker_{\Phi_{W'} \circ p'}\tilde\psi_2}{\tilde\psi_1}) \]
    with $\tilde\varphi_i \coloneq (d_i \circ q_i)^*\cf_{\Ball_W(0, \alpha^+(\varphi))^{n_i}}$ and $\tilde\psi_i \coloneq (d_i' \circ q_i')^*\cf_{\Ball_{W'}(0, \alpha^+(\psi))^{n_i}}$ for $i = 1, 2$. As in the last paragraph, since $\alpha^+(\varphi) = \alpha^+(\psi) \circ g$, by condition~\ref{it:pullback-K}, we can write
    \[ \inner{\Ker_{\Phi_W \circ p}\tilde\varphi_2}{\tilde\varphi_1} = (g \times \id_{h[n, 0, 0]})^*\inner{\Ker_{\Phi_{W'} \circ p'}\tilde\psi_2}{\tilde\psi_1}. \]
    Then equality~\eqref{eq:pushforward-2} is equivalent to the equality
    \[ g_!(\LL^{\alpha^+(\varphi)n}p_!(\varphi(g \times \id_{h[n, 0, 0]})^*\tilde\psi_0)) = \LL^{\alpha^+(\psi)n}p'_!(\psi\tilde\psi_0). \]
    with $\tilde\psi_0 \coloneq \inner{\Ker_{\Phi_W \circ p}\tilde\psi_2}{\tilde\psi_1}$. By the projection axiom, this latter equality is equivalent to the equality
    \[ g_!p_!(p^*(\LL^{\alpha^+(\varphi)n})\varphi(g \times \id_{h[n, 0, 0]})^*\tilde\psi_0) = p'_!(p'^*(\LL^{\alpha^+(\psi)n})\psi\tilde\psi_0), \]
    that is
    \[ g_!p_!(\LL^{(\alpha^+(\varphi) \circ p)n}\varphi(g \times \id_{h[n, 0, 0]})^*\tilde\psi_0) = p'_!(\LL^{(\alpha^+(\psi) \circ p')n}\psi\tilde\psi_0). \]
    Using the commutative diagram
    \begin{center}
        \begin{tikzcd}[column sep=2.5cm]
            W[n, 0, 0] \arrow[r, "g \times \id_{h[n, 0, 0]}"] \arrow[d, "p"'] & W'[n, 0, 0] \arrow[d, "p'"] \\
            W \arrow[r, "g"] & W'
        \end{tikzcd}
    \end{center}
    and Fubini's theorem, we have
    \[ g_! \circ p_! = (g \circ p)_! = (p' \circ (g \times \id_{h[n, 0, 0]}))_! = p'_! \circ (g \times \id_{h[n, 0, 0]})_!. \]
    Then it is enough to show the equality
    \[ (g \times \id_{h[n, 0, 0]})_!(\LL^{(\alpha^+(\varphi) \circ p)n}\varphi(g \times \id_{h[n, 0, 0]})^*\tilde\psi_0) = \LL^{(\alpha^+(\psi) \circ p')n}\psi\tilde\psi_0. \]
    By the projection axiom, the latter equality is equivalent to the equality
    \[ (g \times \id_{h[n, 0, 0]})_!(\LL^{(\alpha^+(\varphi) \circ p)n}\varphi) \cdot \tilde\psi_0 = \LL^{(\alpha^+(\psi) \circ p')n}\psi \cdot \tilde\psi_0. \]
    So it is enough to prove the equality
    \[ (g \times \id_{h[n, 0, 0]})_!(\LL^{(\alpha^+(\varphi) \circ p)n}\varphi) = \LL^{(\alpha^+(\psi) \circ p')n}\psi. \]
    Since $\psi = (g \times \id_{h[n, 0, 0]})_!\varphi$, by the projection axiom, the latter equality is equivalent to the equality
    \[ (g \times \id_{h[n, 0, 0]})_!(\LL^{(\alpha^+(\varphi) \circ p)n} \cdot \varphi) = (g \times \id_{h[n, 0, 0]})_!((g \times \id_{h[n, 0, 0]})^*\LL^{(\alpha^+(\psi) \circ p')n} \cdot \varphi). \]
    So it is enough to prove the equality
    \begin{equation}\label{eq:pushforward-3}
        \LL^{(\alpha^+(\varphi) \circ p)n} = (g \times \id_{h[n, 0, 0]})^*\LL^{(\alpha^+(\psi) \circ p')n}.
    \end{equation}
    As $\alpha^+(\varphi) = \alpha^+(\psi) \circ g$ and by the latter commutative diagram, we have
    \begin{align*}
        (g \times \id_{h[n, 0, 0]})^*\LL^{(\alpha^+(\psi) \circ p')n} &= \LL^{(\alpha^+(\psi) \circ p' \circ (g \times \id_{h[n, 0, 0]}))n} \\
        &= \LL^{(\alpha^+(\psi) \circ g \circ p)n} \\
        &= \LL^{(\alpha^+(\varphi) \circ p)n}.
    \end{align*}
    By going back over all these equivalent or sufficient conditions, this gives equality~\eqref{eq:pushforward-2}. This concludes the proof of the existence.

    \subproof{Uniqueness}
    Let us prove that a distribution $u$ of $\Sch_P'(P[n, 0, 0])\expe$ which satisfies equality~\eqref{eq:noyau-mot} is unique. Let $u$ and $u'$ be two distributions of $\Sch_P'(P[n, 0, 0])\expe$ such that, for each object $\Phi_W \colon W \to P$ of~$\Def_P$ and all functions $\varphi_1$ of $\Sch_W(W[n_1, 0, 0])\expe$ and $\varphi_2$ of $\Sch_W(W[n_2, 0, 0])\expe$, we have
    \begin{align}
        \inner{u_{\Phi_W}}{\varphi_1 \otimes \varphi_2} &= \inner{\Ker_{\Phi_W}\varphi_2}{\varphi_1} \qquad \text{and} \label{eq:noyau-mot-u} \\
        \inner{u'_{\Phi_W}}{\varphi_1 \otimes \varphi_2} &= \inner{\Ker_{\Phi_W}\varphi_2}{\varphi_1}. \label{eq:noyau-mot-u'}
    \end{align}
    Let $\Phi_W \colon W \to P$ be an object of~$\Def_P$. Let $\varphi$ be a function of $\Sch_W(W[n, 0, 0])\expe$. With equalities~\eqref{eq:noyau-mot-u} and~\eqref{eq:noyau-mot-u'}, we know that the two distributions $u$ and $u'$ satisfy respectively equality~\eqref{eq:rel-u-K-rec}, that is
    \begin{align*}
        \inner{u_{\Phi_W}}{\varphi} &= p_!F_{\Ker, \varphi, \alpha^+(\varphi)} \qquad \text{and} \\
        \inner{u'_{\Phi_W}}{\varphi} &= p_!F_{\Ker, \varphi, \alpha^+(\varphi)}.
    \end{align*}
    We deduce that $\inner{u_{\Phi_W}}{\varphi} = \inner{u'_{\Phi_W}}{\varphi}$ which concludes $u = u'$.
\end{proof}

\begin{rema}\label{rem:discussion-hypothesis}
    Let us discuss about conditions~\ref{it:integrable}--\ref{it:independance-alpha-plus}. The first condition ensures integrability in order to consider the functions $F_{\Ker, \varphi, \alpha}$. Conditions~\ref{it:pullback-K} and~\ref{it:pushforward-K} are pretty natural: we want that the distribution satisfies the pullback condition~\ref{it:pullback} and the pushforward condition~\ref{it:pushforward}.

    Thanks to the proof of Theorem~\ref{thm:kernel-mot-rec}, condition~\ref{it:pushforward-K} is satisfied for any definable morphism $g$ of $\Def_P$ if and only if it is satisfied for any identity morphism $g = \id_W$. Indeed, the direct sense is clear. Reciprocally, if a kernel $\Ker$ satisfies the condition~\ref{it:pushforward-K} for definable morphisms of the type $g = \id_W$, then by Theorem~\ref{thm:kernel-mot-rec}, it defines a distribution $u$ and, by Theorem~\ref{thm:kernel-mot}, its kernel, namely the kernel $\Ker$ by uniqueness, satisfies the condition~\ref{it:pushforward-K} for general definable morphisms $g$.

    Condition~\ref{it:independance-alpha-plus} is implied by condition~\ref{it:pushforward-K} for Schwartz--Bruhat functions of the form $\varphi = \varphi_1 \otimes \varphi_2$. Nevertheless, it does not seems to be true for general Schwartz--Bruhat functions. It would be the case if one could compare the morphisms $\alpha$ and $\beta$, but it is not the case. Indeed, a definable morphism is a family of functions indexed by the field of $\Field_k$ and, for instance, it could be%
        \footnote{Actually, it can happen even if $W = \{*\}$. Indeed, let us take $k = \QQ$. We consider the definable morphism $\alpha \colon \{*\} \to \{0, 1\} \subset h[0, 0, 1]$ defined by the equality
        \[ \alpha(*, K) = \begin{cases}
            1 &\text{if } \exists z \in K, z^2 = 2, \\
            0 &\text{else}
        \end{cases} \]
        for all fields $K$ of $\Field_\QQ$. We also consider the definable morphism $\beta \coloneq 1 - \alpha$. Then
        \[ \alpha(*, \QQ) = 0 < \beta(*, \QQ) = 1 \qquad \text{and} \qquad
           \alpha(*, \QQ(\sqrt2)) = 1 > \beta(*, \QQ(\sqrt2)) = 0. \]}
    that there exists two points $w$ and~$w'$ of~$W$ such that
    \[ \alpha(w) < \beta(w) \qquad \text{and} \qquad
       \alpha(w') > \beta(w'). \]

    This hypothesis~\ref{it:independance-alpha-plus} allows to define the distribution $u$ with formula~\eqref{eq:rel-u-K-rec}. Moreover, it provides flexibility in the choice of morphisms~$\alpha^+(-)$. For instance, if we work with a sum~$\varphi = \varphi_1 + \dots + \varphi_\ell$ of some Schwartz--Bruhat functions $\varphi_1$, \dots, $\varphi_\ell$, we want to take the morphism~$\alpha^+(\varphi)$ as the morphism~$\max(\alpha^+(\varphi_1), \dots, \alpha^+(\varphi_\ell))$ and not the optimal morphism to simplify computations (it is the case when we verify p.~\pageref{proof:linearity} the linearity of the maps $u_{\Phi_W}$).
\end{rema}

\begin{exem}
    Let $u$ be a $P$-locally integrable function on $P[n, 0, 0]$, that is a function of $\Cons(P[n, 0, 0])\expe$ such that, for any definable morphism $\alpha \colon P \to h[0, 0, 1]$, the function $u\cf_{\Ball_P(0, \alpha)^n}$ is $P$-integrable. By~\cite[Example~5.14]{Raibaut}, it induces a distribution~$u$ on $P[n, 0, 0]$ defined in the following way: for each object $\Phi_W \colon W \to P$ of $\Def_P$ and each function $\varphi$ of $\Sch_W(W[n, 0, 0])\expe$, if we denote by $p \colon W[n, 0, 0] \to W$ the canonical projection, then the function $(\Phi_W \times \id_{h[n, 0, 0]})^*u \cdot \varphi$ is $p$-integrable and we set
    \[ \inner{u_{\Phi_W}}{\varphi} \coloneq p_!((\Phi_W \times \id_{h[n, 0, 0]})^*u \cdot \varphi). \]
    We denote by $\Ker$ its kernel. Let $p_i \colon W[n, 0, 0] \to W[n_i, 0, 0]$ be the canonical projection for $i = 1, 2$ and $q_1 \colon W[n_1, 0, 0] \to W$ be the canonical projection. Let~$\Phi_W \colon W \to P$ be an object of $\Def_P$. Let $\varphi_1$ be a function of $\Sch_W(W[n_1, 0, 0])\expe$ and $\varphi_2$ be a function of~$\Sch_W(W[n_2, 0, 0])\expe$. Then by Fubini's theorem and the projection axiom, the function $\inner{\Ker_{\Phi_W}\varphi_2}{\varphi_1}$ is given by the equality
    \[ \inner{\Ker_{\Phi_W}\varphi_2}{\varphi_1} = q_{1!}(p_{1!}((\Phi_W \times \id_{h[n, 0, 0]})^*u \cdot p_2^*\varphi_2) \cdot \varphi_1). \]
    Using the same notation as in Remark~\ref{rem:informal}, the function $p_{1!}((\Phi_W \times \id_{h[n, 0, 0]})^*u \cdot p_2^*\varphi_2)$ can be interpreted as the function
    \[ (w, x) \longmapsto \int_{h[n_2, 0, 0]} u(\Phi_W(w), x, y)\varphi_2(w, y) \dd y. \]
    This is the motivic analogue of Example~\ref{exem:locally-integrable-kernel}.
\end{exem}

\subsection{Wave front sets of kernels}

We study now the microlocally smooth definable data and wave front sets of kernels induced by distributions.

From now on, we take $P = \{*\}$. For a distribution $u$ of~$\Sch'(h[n, 0, 0])\expe$ (resp. a kernel $\Ker$ on $h[n_1, 0, 0] \times h[n_2, 0, 0]$) and a definable subassignment~$W$, we consider the final morphism~$\Phi_W \colon W \to \{*\}$ and we set
\[ u_W \coloneq u_{\Phi_W} \qquad
   (\text{resp. } \Ker_W \coloneq \Ker_{\Phi_W}). \]

\subsubsection{Induced distributions}

We first need to define a notion of induced distributions by a distribution defined on a product space and a Schwartz--Bruhat function. This is the motivic analog of the distributions $\Ker\psi$ of §\ref{sec:p}. We keep the same notation as~§\ref{ssec:Schwartz}.

\begin{prop}\label{prop:induced}
    Let $u$ be a distribution of $\Sch'(h[n, 0, 0])\expe$ and $\Ker$ be its kernel. Let $\varphi_2$ be a function of $\Sch(h[n_2, 0, 0])\expe$. Then for each definable subassignment $W$, we consider the canonical projection $\pi_W \colon W[n_2, 0, 0] \to h[n_2, 0, 0]$, the function
    \[ \varphi_{2, W} \coloneq \pi_W^*\varphi_2 \]
    belongs to $\Sch_W(W[n_2, 0, 0])\expe$ and we set
    \begin{equation}\label{eq:induced}
        u_{\varphi_2, W} \coloneq \Ker_W\varphi_{2, W}.
    \end{equation}
    Moreover, the family $(u_{\varphi_2, W})_{W \in \Def_k}$ is a distribution on $h[n_1, 0, 0]$, denoted by $u_{\varphi_2}$.
\end{prop}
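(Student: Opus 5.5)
We must show three things: $\varphi_{2,W}$ is a Schwartz--Bruhat function with parameter relative to $W$; each $u_{\varphi_2,W}$ is a $\Cons(W)\expe$-linear map $\Sch_W(W[n_1,0,0])\expe \to \Cons(W)\expe$; and the family satisfies the pullback condition~\ref{it:pullback} and the pushforward condition~\ref{it:pushforward}. Throughout, we use the defining relation~\eqref{eq:noyau-mot} of the kernel,
\[ \inner{u_{\varphi_2,W}}{\varphi_1} = \inner{\Ker_W\varphi_{2,W}}{\varphi_1} = \inner{u_W}{\varphi_1\otimes\varphi_{2,W}}, \]
so every property of $u_{\varphi_2}$ reduces to a property of $u$ applied to tensor products.

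For the first point, note that $\pi_W = c_W \times \id_{h[n_2,0,0]}$, where $c_W \colon W \to \{*\}$ is the final morphism. Hence $\varphi_{2,W}$ is Schwartz--Bruhat by \cite[Lemma~5.2]{Raibaut}, with $\alpha^\pm(\varphi_{2,W})$ the constant morphisms $\alpha^\pm(\varphi_2)\circ c_W$. Linearity of $u_{\varphi_2,W}$ is immediate, since $\Ker_W\varphi_{2,W}$ lies in the dual module $\Sch_W^*(W[n_1,0,0])\expe$.

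For the pullback condition, let $g\colon W\to W'$ and $\varphi_1 \in \Sch_{W'}(W'[n_1,0,0])\expe$. Condition~\ref{it:pullback-K} gives
\[ g^*\inner{\Ker_{W'}\varphi_{2,W'}}{\varphi_1} = \inner{\Ker_W (g\times\id)^*\varphi_{2,W'}}{(g\times\id)^*\varphi_1}. \]
Since $\pi_{W'}\circ(g\times\id_{h[n_2,0,0]}) = \pi_W$, we have $(g\times\id)^*\varphi_{2,W'} = \varphi_{2,W}$, which is exactly~\ref{it:pullback}.

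The pushforward condition is the main obstacle. Let $\varphi_1$ be $g\times\id_{h[n_1,0,0]}$-convenient. We want $\varphi_1\otimes\varphi_{2,W}$ to be $g\times\id_{h[n,0,0]}$-convenient, so that we can apply~\ref{it:pushforward} for $u$, and then identify $(g\times\id_{h[n,0,0]})_!(\varphi_1\otimes\varphi_{2,W})$ with $\bigl((g\times\id_{h[n_1,0,0]})_!\varphi_1\bigr)\otimes\varphi_{2,W'}$. Write $\varphi_{2,W} = (g\times\id)^*\varphi_{2,W'}$ and pull it back to $W[n,0,0]$ along the projection; its pullback is then a pullback from $W'[n,0,0]$. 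The identification then follows from the projection axiom \cite[Theorem~4.1.1, (A3)]{CluckersLoeser2010} together with Fubini's theorem. This also gives integrability, because $\varphi_1\otimes\varphi_{2,W}$ is the product of a $g\times\id$-integrable function (the pullback of $\varphi_1$, handled via Fubini along $W[n,0,0]\to W[n_1,0,0]$) with a bounded pullback from the target.

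For the convenience bounds we take
\[ \beta^- = \min\bigl(\beta^-(\varphi_1), \alpha^-(\varphi_2)\bigr), \qquad \beta^+ = \max\bigl(\beta^+(\varphi_1), \alpha^+(\varphi_2)\bigr), \]
where the values coming from $\varphi_2$ are constants and therefore factor through $W'$. These satisfy the support and local constancy conditions exactly as in the proof of Proposition~\ref{prop:tenseur-SB}, with the caveat that comparisons of definable morphisms are pointwise. The remark following the definition of convenient functions warns that tensor products of convenient functions need not be convenient. The point here is that the second factor comes from a parameter-free function, which should avoid that issue. This is the step to verify carefully.

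Once convenience holds, \ref{it:pushforward} gives
\[ g_!\inner{u_W}{\varphi_1\otimes\varphi_{2,W}} = \inner{u_{W'}}{(g\times\id_{h[n_1,0,0]})_!\varphi_1\otimes\varphi_{2,W'}}, \]
which, by~\eqref{eq:noyau-mot}, is the required condition~\ref{it:pushforward} for $u_{\varphi_2}$.
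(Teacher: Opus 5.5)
Your proposal is correct and follows the paper's proof. You define $u_{\varphi_2,W}$ through the kernel relation and get \ref{it:pullback} from \ref{it:pullback-K} and $\pi_{W'}\circ(g\times\id_{h[n_2,0,0]})=\pi_W$. You then get \ref{it:pushforward} from three ingredients: the convenience of $\varphi_1\otimes\varphi_{2,W}$, the identity $(g\times\id_{h[n,0,0]})_!(\varphi_1\otimes\varphi_{2,W})=(g\times\id_{h[n_1,0,0]})_!\varphi_1\otimes\varphi_{2,W'}$, and \ref{it:pushforward} for $u$.

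You make the convenience bounds $\beta^\pm$ explicit, where the paper only asserts convenience. The paper obtains the pushforward identity by evaluation \cite{CluckersHalupczok}; your projection-axiom route also needs a base-change step (commuting $(g\times\id)_!$ with $p_1^*$, as in \cite[Theorem~1.1]{CelyRaibaut}), not just Fubini.
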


\begin{proof}
    For each definable subassignment $W$, since $\pi_W = \Phi_W \times \id_{h[n_2, 0, 0]}$ where we consider the final morphism $\Phi_W \colon W \to \{*\}$, by~\cite[Lemma~5.2]{Raibaut}, the function $\varphi_{2, W}$ belongs to $\Sch_W(W[n_2, 0, 0])\expe$.

    Let $g \colon W \to W'$ be a definable morphism. Let us prove the pullback condition~\ref{it:pullback}. Let $\varphi_1$ be a function of $\Sch_{W'}(W'[n_1, 0, 0])\expe$. Thanks to condition~\ref{it:pullback-K}, we can write
    \begin{align*}
        g^*\inner{u_{\varphi_2, W'}}{\varphi_1} &= g^*\inner{\Ker_{W'}\varphi_{2, W'}}{\varphi_1} \\
        &= \inner{\Ker_W(g \times \id_{h[n_2, 0, 0]})^*\varphi_{2, W'}}{(g \times \id_{h[n_1, 0, 0]})^*\varphi_1} \\
        &= \inner{\Ker_W[\pi_{W'} \circ (g \times \id_{h[n_2, 0, 0]})]^*\varphi_2}{(g \times \id_{h[n_1, 0, 0]})^*\varphi_1} \\
        &= \inner{\Ker_W\pi_W^*\varphi_2}{(g \times \id_{h[n_1, 0, 0]})^*\varphi_1} \\
        &= \inner{u_{\varphi_2, W}}{(g \times \id_{h[n_1, 0, 0]})^*\varphi_1}
    \end{align*}
    which gives condition~\ref{it:pullback}.

    Let us prove the pushforward condition~\ref{it:pushforward}. Let $\varphi_1$ be a $g \times \id_{h[n_1, 0, 0]}$-convenient function of $\Sch_W(W[n_1, 0, 0])\expe$. Then the function $\varphi_1 \otimes \varphi_{2, W}$ is $g \times \id_{h[n, 0, 0]}$-convenient since the function $\varphi_{2, W}$ does not depend of the first variable, that is the one relating to the definable subassignment $W$. Moreover, with evaluation~\cite{CluckersHalupczok}, we get the equality
    \[ (g \times \id_{h[n, 0, 0]})_!(\varphi_1 \otimes \varphi_{2, W}) = (g \times \id_{h[n_1, 0, 0]})_!\varphi_1 \otimes \varphi_{2, W'} \]
    since, with integral notation, we have
    \[ \int_{g^{-1}(w')} \varphi_1(w, x)\varphi_2(y) \dd w = \int_{g^{-1}(w')} \varphi_1(w, x) \dd w \cdot \varphi_2(y) \]
    for all points $(w', x, y)$ of $W'[n, 0, 0]$. Then by definition~\eqref{eq:induced} and the pushforward condition~\ref{it:pushforward} for the distribution $u$, the function
    \[ \inner{u_{\varphi_2, W}}{\varphi_1} = \inner{\Ker_W\varphi_{2, W}}{\varphi_1} = \inner{u_W}{\varphi_1 \otimes \varphi_{2, W}} \]
    is $g$-integrable and we can write
    \begin{align*}
        g_!\inner{u_{\varphi_2, W}}{\varphi_1} &= g_!\inner{u_W}{\varphi_1 \otimes \varphi_{2, W}} \\
        &= \inner{u_{W'}}{(g \times \id_{h[n, 0, 0]})_!(\varphi_1 \otimes \varphi_{2, W})} \\
        &= \inner{u_{W'}}{(g \times \id_{h[n_1, 0, 0]})_!\varphi_1 \otimes \varphi_{2, W'}} \\
        &= \inner{\Ker_{W'}\varphi_{2, W'}}{(g \times \id_{h[n_1, 0, 0]})_!\varphi_1} \\
        &= \inner{u_{\varphi_2, W'}}{(g \times \id_{h[n_1, 0, 0]})_!\varphi_1}
    \end{align*}
    which gives condition~\ref{it:pushforward} and concludes the proof.
\end{proof}

\subsubsection{Microlocally smooth definable data and kernels}

We recall the notion of microlocally smooth definable data for a distribution introduced by Raibaut~\cite{Raibaut}. We endow the definable subassignment $h[0, 0, 1]$ with the discrete topology.

Let $n$ and $\n$ be two positive integers. We consider the definable subassignments
\[ \Lambda_\n \coloneq \{x \in h[1, 0, 0] \mid \ord x \equiv 0 \mod\n, \; \ac x = 1\} \]
and
\[ \B_\n^n \coloneq \{\xi \in h[n, 0, 0] \mid 0 \leqslant \ord\xi < \n\} \]
where we define $\ord\xi \coloneq \min_{1 \leqslant i \leqslant n} \ord\xi_i$ for each point $\xi = (\xi_1, \dots, \xi_n)$ of $h[n, 0, 0]$.

\begin{defi}[Raibaut~{\cite[Definition~6.6]{Raibaut}}]
    A \emph{$\Lambda_\n$-microlocally smooth definable data} of a distribution $u$ of $\Sch'(h[n, 0, 0])\expe$ is the data of
    \begin{itemize}
         \item a definable subassignment $\A$ of $h[n, 0, 0] \times \B_\n^n$;

         \item two continuous definable morphisms $\r, \check\r \colon \A \to h[0, 0, 1]$ such that, for all points~$(x_0, \xi_0, K)$ of $\A$, we have
         \begin{equation}\label{eq:inclusion-A}
            \Ball(x_0, \r_K(x_0, \xi_0)) \times \Ball(\xi_0, \check\r_K(x_0, \xi_0)) \subset \A(K)
                \footnote{This inclusion will simply be denoted by $\Ball(x_0, \r(x_0, \xi_0)) \times \Ball(\xi_0, \check \r(x_0, \xi_0)) \subset \A$.}
         \end{equation}
         and $\check\r_K(-, \xi_0) \geqslant \n$;

         \item a continuous definable morphism $N \colon B \to h[0, 0, 1]$ with
         \[ B \coloneq \{((x_0, \xi_0), x', r') \in \A \times h[n, 0, 0] \times h[0, 0, 1] \mid \Ball(x', r') \subset \Ball(x_0, \r(x_0, \xi_0))\} \]
    \end{itemize}
    such that
    \begin{equation}\label{eq:msdd}
        \inner{u_{\Lambda_\n \times D}}{T}\cf_E\cf_{B_N} = \inner{u_{\Lambda_\n \times D}}{T}\cf_E
    \end{equation}
    where we consider the definable subassignments
    \begin{align*}
        D &\coloneq h[3n, 0, 0] \times h[0, 0, 1] \times h[n, 0, 0], \\
        E &\coloneq \{(\lambda, (x_0, \xi_0), x', r', \xi) \in \Lambda_\n \times B \times h[n, 0, 0] \mid \xi \in \Ball(\xi_0, \check\r(x_0, \xi_0))\}, \\
        B_N &\coloneq \{(\lambda, (x_0, \xi_0), x', r', \xi) \in \Lambda_\n \times B \times h[n, 0, 0] \mid \ord\lambda \geqslant N((x_0, \xi_0), x', r')\}
    \end{align*}
    and the function $T$ of $\Sch_{\Lambda_\n \times D}(\Lambda_\n \times D[n, 0, 0])\expe$ defined by the equality
    \[ T(\lambda, (x_0, \xi_0), x', r', \xi, x) \coloneq \cf_{\Ball(x', r')}(x)\Exp(\inner{x}{\lambda\xi}) \]
    for all points $(\lambda, (x_0, \xi_0), x', r', \xi, x)$ of $\Lambda_\n \times D[n, 0, 0]$.
\end{defi}

\begin{rema}\label{rem:complementary}
    Taking the complementary in $\Lambda_\n \times B \times h[n, 0, 0]$, equation~\eqref{eq:msdd} can be rewritten as
    \begin{equation}\label{eq:msdd-comp}
        \inner{u_{\Lambda_\n \times D}}{T}\cf_E\cf_{(\Lambda_\n \times B \times h[n, 0, 0]) \setminus B_N} = 0.
    \end{equation}
\end{rema}

We now state the motivic analog of Theorem~\ref{thm:WF-p} which concerns $\Lambda_\n$-microlocally smooth data of the induced distribution. The result on wave front sets will be done in Corollary~\ref{coro:WF-mot}. We always keep the same notation as~§\ref{ssec:Schwartz}.

\begin{theo}\label{thm:WF-mot}
    Let $u$ be a distribution of $\Sch'(h[n, 0, 0])\expe$ and~$(\A, \r, \check\r, N)$ be a~$\Lambda_n$-microlocally smooth definable data of the distribution $u$. Let $\varphi_2$ be a function of~$\Sch(h[n_2, 0, 0])\expe$ and let $u_{\varphi_2}$ be the induced distribution of Proposition~\ref{prop:induced}. We consider definable morphisms $\alpha^-(\varphi_2)$ and $\alpha^+(\varphi_2)$ as in Definition~\ref{def:SB}. Then the quadruplet $(\A', \r', \check\r', N')$ defined by the equalities
    \begin{align*}
        \A' &\coloneq \{(x_0, \xi_0) \in h[n_1, 0, 0] \times \B_\n^{n_1} \mid \forall y \in \Ball(0, \alpha^-(\varphi_2))^{n_2}, \; (x_0, y, \xi_0, 0) \in \A\}, \\
        \r'(x_0, \xi_0) &\coloneq \max(\alpha^+(\varphi_2), \max(\r(x_0, y, \xi_0, 0) : y \in \Ball(0, \alpha^-(\varphi_2))^{n_2})), \\
        \check\r'(x_0, \xi_0) &\coloneq \max(\check\r(x_0, y, \xi_0, 0) : y \in \Ball(0, \alpha^-(\varphi_2))^{n_2})
    \end{align*}
    for all points $(x_0, \xi_0)$ of $\A'$ and
    \[ N'((x_0, \xi_0), x', r') \coloneq \min(N((x_0, y, \xi_0, 0), (x', y), r') : y \in \Ball(0, \alpha^-(\varphi_2))^{n_2}) \]
    for all points $((x_0, \xi_0), x', r')$ of
    \[ B' \coloneq \{((x_0, \xi_0), x', r') \in \A' \times h[n, 0, 0] \times h[0, 0, 1] \mid \Ball(x', r') \subset \Ball(x_0, \r'(x_0, \xi_0))\} \]
    is a $\Lambda_\n$-microlocally smooth definable data of the distribution $u_{\varphi_2}$ where the considered maxima and minimum are well defined in virtue of~\cite[Proposition~3.3]{Raibaut}.
\end{theo}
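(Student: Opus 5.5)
The argument follows the $p$-adic proof of Theorem~\ref{thm:WF-p}, with a uniform ball in the $y$-variable replacing the compactness argument. The ball is $\Ball(0, \alpha^-(\varphi_2))^{n_2}$, which contains the support of $\varphi_2$.

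\textbf{Well-definedness of the data.} First I check that $(\A', \r', \check\r', N')$ has the required shape. The set $\A'$ is definable, since the quantifier over $y$ ranges over a definable ball. The morphisms $\r'$, $\check\r'$ and $N'$ are definable and continuous; this follows from \cite[Proposition~3.3]{Raibaut}, which gives existence of the maxima and minimum over the $y$-ball. The bound $\check\r' \geqslant \n$ holds because each $\check\r(x_0, y, \xi_0, 0) \geqslant \n$.

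For inclusion~\eqref{eq:inclusion-A}, take $x \in \Ball(x_0, \r'(x_0, \xi_0))$ and $\xi \in \Ball(\xi_0, \check\r'(x_0, \xi_0))$. Since $\r' \geqslant \r(x_0, y, \xi_0, 0)$ and $\check\r' \geqslant \check\r(x_0, y, \xi_0, 0)$ for every $y$ in the ball, the point $(x, y, \xi, 0)$ lies in $\Ball((x_0, y), \r) \times \Ball((\xi_0, 0), \check\r) \subset \A$. Hence $(x, \xi) \in \A'$.

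For a point $((x_0, \xi_0), x', r')$ of $B'$ and $y$ in the ball, I need $((x_0, y, \xi_0, 0), (x', y), r') \in B$. This holds because $\Ball((x', y), r') \subset \Ball((x_0, y), \r(x_0, y, \xi_0, 0))$ follows from $r' \geqslant \r'$.

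\textbf{Reduction to $u$.} By Proposition~\ref{prop:induced} and equality~\eqref{eq:noyau-mot}, I rewrite the left side of~\eqref{eq:msdd} for $u_{\varphi_2}$ as
\[ \inner{u_{\Lambda_\n \times D'}}{T' \otimes \varphi_{2, \Lambda_\n \times D'}}, \qquad T'(\ldots, x) = \cf_{\Ball(x', r')}(x)\Exp(\inner{x}{\lambda\xi}). \]
The condition $r' \geqslant \r' \geqslant \alpha^+(\varphi_2)$ makes $\varphi_2$ constant on balls of radius $r'$. Together with $\supp\varphi_2 \subset \Ball(0, \alpha^-(\varphi_2))^{n_2}$, this lets me decompose
\[ \cf_{\Ball(x', r')}(x)\,\varphi_2(y) = \sum_{y'} \varphi_2(y')\,\cf_{\Ball((x', y'), r')}(x, y), \]
where the sum runs over the finitely many balls of radius $r'$ covering the $y$-ball. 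The character $\Exp(\inner{(x, y)}{\lambda(\xi, 0)})$ does not depend on $y$. So the test function equals a sum of the functions $T$ attached to the points $((x_0, y', \xi_0, 0), (x', y'), r', (\xi, 0))$, weighted by $\varphi_2(y')$.

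In the motivic setting this finite sum must be written as a pushforward along the parameter $y'$ over the definable set of centres. Concretely, I would integrate over $y' \in \Ball(0, \alpha^-)^{n_2}$ with weight $\LL^{r' n_2}\varphi_2(y')$, in the averaging style of~\eqref{eq:rel-u-K}. The pushforward condition~\ref{it:pushforward} then moves $u$ past this integral; the relevant function is convenient because the radius $r'$ and the support bound are uniform. The pullback condition~\ref{it:pullback}, along the definable map $(\lambda, (x_0, \xi_0), x', r', \xi, y') \mapsto (\lambda, (x_0, y', \xi_0, 0), (x', y'), r', (\xi, 0))$, identifies the integrand with the pullback of $\inner{u_{\Lambda_\n \times D}}{T}$.

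\textbf{Vanishing.} On $E'$ minus $B_{N'}$ we have $\ord\lambda < N' \leqslant N(\ldots, y')$ for every $y'$, and $(\xi, 0) \in \Ball((\xi_0, 0), \check\r)$. Hence the pulled-back point lies in $E$ minus $B_N$. The form~\eqref{eq:msdd-comp} of Remark~\ref{rem:complementary} then makes the integrand vanish, so the integral vanishes, which proves~\eqref{eq:msdd} for $u_{\varphi_2}$.

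\textbf{Main obstacle.} The hardest step is the second one: rigorously expressing the ball decomposition as a motivic pushforward and verifying convenience for~\ref{it:pushforward}. I would first try to reuse the computation behind~\eqref{eq:rel-u-K}, namely the convolution identity from Proposition~\ref{prop:convol} applied in the $y$-variable only.
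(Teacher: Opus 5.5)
Your proposal is correct and follows essentially the same route as the paper. The paper likewise inserts the indicator of $E'$ minus $B_{N'}$ into the test function by $\Cons(\Lambda_\n \times D')\expe$-linearity, so that $r' \geqslant \r' \geqslant \alpha^+(\varphi_2)$ holds where needed; it then rewrites $\sigma^*\varphi_2$ via the identities~\eqref{eq:locally-constant} and~\eqref{eq:bounded-support} as $\LL^{nr'}(p' \times \id_{h[n, 0, 0]})_!(\cdots)$, applies~\ref{it:pushforward}, and finally applies~\ref{it:pullback} along exactly your map (called $\Phi$ there, with an extra dummy variable $z_1$ weighted by $\cf_{\Ball(0, r')^{n_1}}$), together with a pointwise comparison of indicator sets in which its auxiliary set $A$ turns your one-sided inclusion into an exact equality of indicator functions.
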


\begin{proof}
    First, let us verify that the definable morphism $N'$ is well defined. We set
    \begin{multline*}
        B \coloneq \{((x_0, y_0, \xi_0, \eta_0), (x', y'), r') \in \A \times h[n, 0, 0] \times h[0, 0, 1] \\
        \mid \Ball((x', y'), r') \subset \Ball((x_0, y_0), \r(x_0, y_0, \xi_0, \eta_0))\}.
    \end{multline*}
    Let~$((x_0, \xi_0), x', r')$ be a point of $B'$. First, we prove that, for any point $y$ of~$\Ball(0, \alpha^-(\varphi_2))^{n_2}$, we have
    \[ ((x_0, y, \xi_0, 0), (x', y), r') \in B. \]
    Let $y$ be a point of $\Ball(0, \alpha^-(\varphi_2))^{n_2}$. Then the definition of the definable subassignment~$\A'$ gives $(x_0, y, \xi_0, 0) \in \A$. It remains to verify that
    \[ \Ball((x', y), r') \subset \Ball((x_0, y), \r(x_0, y, \xi_0, 0)), \]
    that is
    \begin{align*}
        \Ball(x', r') &\subset \Ball(x_0, \r(x_0, y, \xi_0, 0)) \qquad \text{and} \qquad \\
        r' &\geqslant \r(x_0, y, \xi_0, 0).
    \end{align*}
    But since $((x_0, \xi_0), x', r') \in B'$ and $\r'(x_0, \xi_0) \geqslant \r(x_0, y, \xi_0, 0)$, we know that
    \[ \Ball(x', r') \subset \Ball(x_0, \r'(x_0, \xi_0)) \subset \Ball(x_0, \r(x_0, y, \xi_0, 0)) \]
    and thus $r' \geqslant \r(x_0, y, \xi_0, 0)$. This proves that $((x_0, y, \xi_0, 0), (x', y), r') \in B$. Now, the quantity $N'((x_0, \xi_0), x', r')$ for a point $((x_0, \xi_0), x', r')$ of $B'$ is well-defined by~\cite[Proposition~3.3]{Raibaut} and it gives a definable morphism $N'$.

    \bigbreak

    Let us prove inclusion~\eqref{eq:inclusion-A} for the quadruplet $(\A', \r', \check\r', N')$, namely we want to show that, for any point $(x_0, \xi_0)$ of~$\A'$, we have
    \begin{equation}\label{eq:inclusion-A'}
        \Ball(x_0, \r'(x_0, \xi_0)) \times \Ball(\xi_0, \check\r'(x_0, \xi_0)) \subset \A'.
    \end{equation}
    Let $(x_0, \xi_0)$ be a point of $\A'$. By definition of the definable subassignment $\A'$, for all points $y$ of $\Ball(0, \alpha^-(\varphi_2))^{n_2}$, we have $(x_0, y, \xi_0, 0) \in \A$ and so
    \begin{equation}\label{eq:inclusion-A-2}
        \Ball((x_0, y), \r(x_0, y, \xi_0, 0)) \times \Ball((\xi_0, 0), \check\r(x_0, y, \xi_0, 0)) \subset \A.
    \end{equation}
    Let $(x, \xi)$ be a point of $\Ball(x_0, \r'(x_0, \xi_0)) \times \Ball(\xi_0, \check\r'(x_0, \xi_0))$. Let us prove that $(x, \xi) \in \A'$. Let $y$ be a point of~$\Ball(0, \alpha^-(\varphi_2))^{n_2}$. We need to show that $(x, y, \xi, 0) \in \A$. On the one hand, we get
    \[ (x, y) \in \Ball((x_0, y), \r'(x_0, \xi_0)) \subset \Ball((x_0, y), \r(x_0, y, \xi_0, 0)) \]
    since $x \in \Ball(x_0, \r'(x_0, \xi_0))$ and $\r'(x_0, \xi_0) \geqslant \r(x_0, y, \xi_0, 0)$. On the other hand, the same argument gives
    \[ (\xi, 0) \in \Ball((\xi_0, 0), \check\r(x_0, y, \xi_0, 0)). \]
    By inclusion~\eqref{eq:inclusion-A-2}, this gives $(x, y, \xi, 0) \in \A$. We have proven inclusion~\eqref{eq:inclusion-A'}. Furthermore, we get $\check\r'(-, \xi_0) \geqslant \n$.

    \bigbreak

    Moreover, by the proof of~\cite[Lemma~3.1]{Raibaut}, the definable morphisms $\r'$, $\check\r'$ and $N'$ are continuous.

    \bigbreak

    We must now prove equality~\eqref{eq:msdd} for the quadruplet $(\A', \r', \check\r', N')$ and that will conclude the proof. Namely, if we consider the definable subassignments
    \begin{align*}
        D' &\coloneq h[3n_1, 0, 0] \times h[0, 0, 1] \times h[n_1, 0, 0], \\
        E' &\coloneq \{(\lambda, (x_0, \xi_0), x', r', \xi) \in \Lambda_\n \times B' \times h[n_1, 0, 0] \mid \xi \in \Ball(\xi_0, \check\r'(x_0, \xi_0))\}, \\
        B_{N'} &\coloneq \{(\lambda, (x_0, \xi_0), x', r', \xi) \in \Lambda_\n \times B' \times h[n_1, 0, 0] \mid \ord\lambda \geqslant N'((x_0, \xi_0), x', r')\}
    \end{align*}
    and the function $T'$ of $\Sch_{\Lambda_\n \times D'}(\Lambda_\n \times D'[n_1, 0, 0])\expe$ defined by the equality
    \[ T'(\lambda, (x_0, \xi_0), x', r', \xi, x) \coloneq \cf_{\Ball(x', r')}(x)\Exp(\inner{x}{\lambda\xi}) \]
    for all points $(\lambda, (x_0, \xi_0), x', r', \xi, x)$ of $\Lambda_\n \times D'[n_1, 0, 0]$, we want to prove the equality
    \begin{equation}\label{eq:msdd-2}
        \inner{u_{\varphi_2, \Lambda_\n \times D'}}{T'}\cf_{E'}\cf_{(\Lambda_\n \times B' \times h[n_1, 0, 0]) \setminus B_{N'}} = 0
    \end{equation}
    using Remark~\ref{rem:complementary}.

    By the definition~\eqref{eq:induced} of the distribution $u_{\varphi_2}$ and the~$\Cons(\Lambda_\n \times D')\expe$-linearity of the map $u_{\Lambda_\n \times D'}$, the left member of equation~\eqref{eq:msdd-2} is
    \begin{multline*}
        \inner{u_{\varphi_2, \Lambda_\n \times D'}}{T'}\cf_{E'}\cf_{(\Lambda_\n \times B' \times h[n_1, 0, 0]) \setminus B_{N'}} \\
        = \inner{u_{\Lambda_\n \times D'}}{T' \otimes \pi_{\Lambda_\n \times D'}^*\varphi_2 \cdot p'^*(\cf_{E'}\cf_{(\Lambda_\n \times B' \times h[n_1, 0, 0]) \setminus B_{N'}})}.
    \end{multline*}
    In order to apply condition~\ref{it:pushforward}, we first rewrite (see equation~\eqref{eq:function-eq}) the function
    \begin{equation}\label{eq:function}
        T' \otimes \pi_{\Lambda_\n \times D'}^*\varphi_2 \cdot p'^*(\cf_{E'}\cf_{(\Lambda_\n \times B' \times h[n_1, 0, 0]) \setminus B_{N'}}).
    \end{equation}
    using a convolution identity~\eqref{eq:locally-constant}. For that, we consider the function $f$ on $\Lambda_\n \times D'[n, 0, 0]$ defined by the equality
    \[ f(\lambda, (x_0, \xi_0), x', r', \xi, (z_1, z_2)) \coloneq \cf_{\Ball(0, r')^{n_1}}(z_1) \]
    for all points $(\lambda, (x_0, \xi_0), x', r', \xi, (z_1, z_2))$ of $\Lambda_\n \times D'[n, 0, 0]$ and the functions $g$ and $h$ on~$\Lambda_\n \times D'[n, 0, 0][n, 0, 0]$ defined by the equalities
    \begin{align*}
        g(\lambda, (x_0, \xi_0), x', r', \xi, (z_1, z_2), (x, y)) &\coloneq \cf_{\Ball(0, r')^{n_2}}(y - z_2) = \cf_{\Ball(y, r')}(z_2) \qquad \text{and} \\
        h(\lambda, (x_0, \xi_0), x', r', \xi, (z_1, z_2), (x, y)) &\coloneq \cf_{\Ball(0, \alpha^-(\varphi_2))^{n_2}}(z_2)
    \end{align*}
    for all points $(\lambda, (x_0, \xi_0), x', r', \xi, (z_1, z_2), (x, y))$ of $\Lambda_\n \times D'[n, 0, 0][n, 0, 0]$. Moreover, we denote by
    \begin{itemize}
        \item $\pi_1 \colon \Lambda_\n \times D'[n, 0, 0][n, 0, 0] \to \Lambda_\n \times D'[n, 0, 0]$ the projection
        \[ (\lambda, (x_0, \xi_0), x', r', \xi, (z_1, z_2), (x, y)) \longmapsto (\lambda, (x_0, \xi_0), x', r', \xi, (z_1, z_2)), \]

        \item $\pi_2 \colon \Lambda_\n \times D'[n, 0, 0][n, 0, 0] \to \Lambda_\n \times D'[n, 0, 0]$ the projection
        \[ (\lambda, (x_0, \xi_0), x', r', \xi, (z_1, z_2), (x, y)) \longmapsto (\lambda, (x_0, \xi_0), x', r', \xi, (x, y)), \]

        \item $\rho \colon \Lambda_\n \times D'[n, 0, 0] \to \Lambda \times D'[n_1, 0, 0]$ the projection
        \[ (\lambda, (x_0, \xi_0), x', r', \xi, (x, y)) \longmapsto (\lambda, (x_0, \xi_0), x', r', \xi, x), \]

        \item $\sigma \colon \Lambda_\n \times D'[n, 0, 0] \to h[n_2, 0, 0]$ the projection
        \[ (\lambda, (x_0, \xi_0), x', r', \xi, (z_1, z_2)) \longmapsto z_2, \]

        \item $p' \colon \Lambda_\n \times D'[n, 0, 0] \to \Lambda_\n \times D'$ the projection
        \[ (\lambda, (x_0, \xi_0), x', r', \xi, (x, y)) \longmapsto (\lambda, (x_0, \xi_0), x', r', \xi). \]
    \end{itemize}
    These projections can be sum up with the following diagram.
    \begin{center}
        \begin{tikzcd}[column sep=2cm]
            && \Lambda_\n \times D' \\
            \Lambda_\n \times D'[2n, 0, 0]
                \arrow[r, "\pi_1", shift left=2]
                \arrow[r, "\pi_2"', shift right=2] &
            \Lambda_\n \times D'[n, 0, 0]
                \arrow[ur, "p'", to path={[pos=.75] |- (\tikztotarget) \tikztonodes}, rounded corners]
                \arrow[dr, "\rho"', to path={[pos=.75] |- (\tikztotarget) \tikztonodes}, rounded corners]
                \arrow[r, "\sigma"]
            & h[n_2, 0, 0] \\
            && \Lambda_\n \times D'[n_1, 0, 0]
        \end{tikzcd}
    \end{center}
    Let $(\lambda, (x_0, \xi_0), x', r', \xi, (x, y))$ be a point of $\Lambda_\n \times D'[n, 0, 0]$. We consider the final morphism $\Phi_{\Lambda_\n \times D'[n_1, 0,0]} \colon \Lambda_\n \times D'[n_1, 0,0] \to \{*\}$. Since $\sigma = \Phi_{\Lambda_\n \times D'[n_1, 0,0]} \times \id_{h[n_2, 0, 0]}$ and $\varphi_2 \in \Sch(h[n_2, 0, 0])\expe$, by~\cite[Lemma~5.2]{Raibaut}, we get
    \[ \sigma^*\varphi_2 \in \Sch_{\Lambda_\n \times D'[n_1, 0,0]}(\Lambda_\n \times D'[n, 0,0])\expe. \]
    Then by definition~\eqref{eq:tensor-product}, identities~\eqref{eq:locally-constant} and~\eqref{eq:bounded-support} of Definition~\ref{def:SB} for the function~$\sigma^*\varphi_2$, the Fubini's theorem, the projection axiom, the inequality~$\r'(-) \geqslant \alpha^+(\varphi_2)$ and~\cite[Corollary~3.6.3]{CluckersHalupczok}, we can write
    \begin{align*}
        &[T' \otimes \pi_{\Lambda_\n \times D'}^*\varphi_2 \cdot p'^*(\cf_{E'}\cf_{(\Lambda_\n \times B' \times h[n_1, 0, 0]) \setminus B_{N'}})](\lambda, (x_0, \xi_0), x', r', \xi, (x, y)) \\
        &\qquad = [\rho^*T' \cdot \sigma^*\varphi_2 \cdot p'^*(\cf_{E'}\cf_{(\Lambda_\n \times B' \times h[n_1, 0, 0]) \setminus B_{N'}})](\lambda, (x_0, \xi_0), x', r', \xi, (x, y)) \\
        &\qquad= T'(\lambda, (x_0, \xi_0), x', r', \xi, x) \cdot \LL^{n_2r'} \int_{h[n_2, 0, 0]} \varphi_2(z_2)\cf_{\Ball(0, r')^{n_2}}(y - z_2) \dd z_2 \\
        &\qquad\qquad\qquad \cdot [\cf_{E'}\cf_{(\Lambda_\n \times B' \times h[n_1, 0, 0]) \setminus B_{N'}}](\lambda, (x_0, \xi_0), x', r', \xi) \\
        &\qquad= \LL^{nr'} \int_{h[n, 0, 0]} \varphi_2(z_2)\cf_{\Ball(0, \alpha^-(\varphi_2))^{n_2}}(z_2)T'(\lambda, (x_0, \xi_0), x', r', \xi, x) \\
        &\qquad\qquad\qquad \cf_{\Ball(0, r')^{n_1}}(z_1)\cf_{\Ball(0, r')^{n_2}}(y - z_2) \\
        &\qquad\qquad\qquad \cdot [\cf_{E'}\cf_{(\Lambda_\n \times B' \times h[n_1, 0, 0]) \setminus B_{N'}}](\lambda, (x_0, \xi_0), x', r', \xi) \dd z_1 \dd z_2 \\
        &\qquad= [\LL^{nr'}\pi_{2!}((\sigma \circ \pi_1)^*\varphi_2 \cdot (\rho \circ \pi_2)^*T' \cdot \pi_1^*f \cdot g \\
        &\qquad\qquad\qquad \cdot h \cdot (p' \circ \pi_2)^*(\cf_{E'}\cf_{(\Lambda_\n \times B' \times h[n_1, 0, 0]) \setminus B_{N'}}))](\lambda, (x_0, \xi_0), x', r', \xi, (x, y)).
    \end{align*}
    Consequently, by~\cite[Theorem~1]{CluckersHalupczok}, since $\pi_2 = p' \times \id_{h[n, 0, 0]}$, the function~\eqref{eq:function} is
    \begin{multline}\label{eq:function-eq}
        T' \otimes \pi_{\Lambda_\n \times D'}^*\varphi_2 \cdot p'^*(\cf_{E'}\cf_{(\Lambda_\n \times B' \times h[n_1, 0, 0]) \setminus B_{N'}}) \\
        = \LL^{nr'}(p' \times \id_{h[n, 0, 0]})_!((\sigma \circ \pi_1)^*\varphi_2 \cdot (\rho \circ \pi_2)^*T' \cdot \pi_1^*f \cdot g \\
        \cdot h \cdot (p' \circ \pi_2)^*(\cf_{E'}\cf_{(\Lambda_\n \times B' \times h[n_1, 0, 0]) \setminus B_{N'}})).
    \end{multline}

    Then thanks to formula~\eqref{eq:function-eq}, the pushforward condition~\ref{it:pushforward} for the distribution~$u$ and the~$\Cons(\Lambda_\n \times D'[n, 0, 0])\expe$-linearity of the map $u_{\Lambda_\n \times D'[n, 0, 0]}$, we obtain
    \begin{align*}
        &\inner{u_{\varphi_2, \Lambda_\n \times D'}}{T'}\cf_{E'}\cf_{(\Lambda_\n \times B' \times h[n_1, 0, 0]) \setminus B_{N'}} \\
        &\qquad= \inner{u_{\Lambda_\n \times D'}}{T' \otimes \pi_{\Lambda_\n \times D'}^*\varphi_2 \cdot p'^*(\cf_{E'}\cf_{(\Lambda_\n \times B' \times h[n_1, 0, 0]) \setminus B_{N'}})} \\
        &\qquad= \inner{u_{\Lambda_\n \times D'}}{\LL^{nr'}(p' \times \id_{h[n, 0, 0]})_!((\sigma \circ \pi_1)^*\varphi_2 \cdot (\rho \circ \pi_2)^*T' \\
        &\qquad\qquad\qquad \cdot \pi_1^*f \cdot g \cdot h \cdot (p' \circ \pi_2)^*(\cf_{E'}\cf_{(\Lambda_\n \times B' \times h[n_1, 0, 0]) \setminus B_{N'}}))} \\
        &\qquad= \LL^{nr'}p'_!\inner{u_{\Lambda_\n \times D'[n, 0, 0]}}{(\sigma \circ \pi_1)^*\varphi_2 \cdot (\rho \circ \pi_2)^*T' \\
        &\qquad\qquad\qquad \cdot \pi_1^*f \cdot g \cdot h \cdot (p' \circ \pi_2)^*(\cf_{E'}\cf_{(\Lambda_\n \times B' \times h[n_1, 0, 0]) \setminus B_{N'}})} \\
        &\qquad= \LL^{nr'}p'_!(\sigma^*\varphi_2 \cdot f \cdot \inner{u_{\Lambda_\n \times D'[n, 0, 0]}}{(\rho \circ \pi_2)^*T' \\
        &\qquad\qquad\qquad \cdot g \cdot h \cdot (p' \circ \pi_2)^*(\cf_{E'}\cf_{(\Lambda_\n \times B' \times h[n_1, 0, 0]) \setminus B_{N'}})}).
    \end{align*}
    So to prove equation~\eqref{eq:msdd-2}, it is enough to show the equality
    \begin{equation}\label{eq:msdd-3}
        \inner{u_{\Lambda_\n \times D'[n, 0, 0]}}{(\rho \circ \pi_2)^*T' \cdot g \cdot h \cdot (p' \circ \pi_2)^*(\cf_{E'}\cf_{(\Lambda_\n \times B' \times h[n_1, 0, 0]) \setminus B_{N'}})} = 0.
    \end{equation}

    We consider the definable subassignments
    \begin{align*}
        D &\coloneq h[3n, 0, 0] \times h[0, 0, 1] \times h[n, 0, 0], \\
        E &\coloneq \{(\lambda, (x_0, y_0, \xi_0, \eta_0), (x', y'), r', (\xi, \eta)) \in \Lambda_\n \times B \times h[n, 0, 0] \\
        &\qquad\qquad \mid (\xi, \eta) \in \Ball((\xi_0, \eta_0), \check\r(x_0, y_0, \xi_0, \eta_0))\}, \\
        B_N &\coloneq \{(\lambda, (x_0, y_0, \xi_0, \eta_0), (x', y'), r', (\xi, \eta)) \in \Lambda_\n \times B \times h[n, 0, 0] \\
        &\qquad\qquad \mid \ord\lambda \geqslant N((x_0, y_0, \xi_0, \eta_0), (x', y'), r')\}
    \end{align*}
    and the function $T$ of $\Sch_{\Lambda_\n \times D}(\Lambda_\n \times D[n, 0, 0])\expe$ defined by the equality
    \[ T(\lambda, (x_0, y_0, \xi_0, \eta_0), (x', y'), r', (\xi, \eta), (x, y)) \coloneq \cf_{\Ball((x', y'), r')}(x, y)\Exp(\inner{(x, y)}{\lambda(\xi, \eta)}) \]
    for all points $(\lambda, (x_0, y_0, \xi_0, \eta_0), (x', y'), r', (\xi, \eta), (x, y))$ of $\Lambda_\n \times D[n, 0, 0]$. Since the quadruplet $(\A, \r, \check\r, N)$ is a $\Lambda_\n$-microlocally smooth definable data of the distribution~$u$, equation~\eqref{eq:msdd-comp} gives
    \begin{equation}\label{eq:msdd-u}
        \inner{u_{\Lambda_\n \times D}}{T}\cf_E\cf_{(\Lambda_\n \times B \times h[n, 0, 0]) \setminus B_N} = 0.
    \end{equation}
    Considering the canonical projection $p \colon \Lambda_\n \times D[n, 0, 0] \to \Lambda_\n \times D$ and the definable subassignment
    \begin{multline*}
        A \coloneq \\
        \left\{(\lambda, (x_0, y_0, \xi_0, \eta_0), (x', y'), r', (\xi, \eta)) \in \Lambda_\n \times D \mathrel{}\middle|\mathrel{} \begin{aligned}
            (x_0, \xi_0) &\in \A', \\
            \Ball(x', r') &\subset \Ball(x_0, \r'(x_0, \xi_0)), \\
            \xi &\in \Ball(\xi_0, \check\r'(x_0, \xi_0)), \\
            \ord\lambda &< N'((x_0, \xi_0), x', r'), \\
            y_0 &\in \Ball(0, \alpha^-(\varphi_2))^{n_2}
        \end{aligned}\right\}\kern-\nulldelimiterspace,
    \end{multline*}
    multiplying equation~\eqref{eq:msdd-u} by the function $\cf_A$ and using the $\Cons(\Lambda_\n \times D)\expe$-linearity of the map $u_{\Lambda_\n \times D}$, we obtain
    \begin{equation}\label{eq:msdd-u2}
        \inner{u_{\Lambda_\n \times D}}{T \cdot p^*(\cf_E\cf_{(\Lambda_\n \times B \times h[n, 0, 0]) \setminus B_N}\cf_A)} = 0.
    \end{equation}
    We consider the definable morphism
    \[ \fonc{\Phi}{\Lambda_\n \times D'[n, 0, 0]}{\Lambda_\n \times D,}{(\lambda, (x_0, \xi_0), x', r', \xi, (z_1, z_2))}{(\lambda, (x_0, z_2, \xi_0, 0), (x', z_2), r', (\xi, 0)).} \]
    With the pullback condition~\ref{it:pullback} for the distribution $u$, equality~\eqref{eq:msdd-u2} gives
    \[ \inner{u_{\Lambda_\n \times D'[n, 0, 0]}}{(\Phi \times \id_{h[n, 0, 0]})^*(T \cdot p^*(\cf_E\cf_{(\Lambda_\n \times B \times h[n, 0, 0]) \setminus B_N}\cf_A))} = 0. \]
    So to prove equation~\eqref{eq:msdd-3}, it is enough to show the equality
    \begin{multline*}
        (\rho \circ \pi_2)^*T' \cdot g \cdot h \cdot (p' \circ \pi_2)^*(\cf_{E'}\cf_{(\Lambda_\n \times B' \times h[n_1, 0, 0]) \setminus B_{N'}}) \\
        = (\Phi \times \id_{h[n, 0, 0]})^*(T \cdot p^*(\cf_E\cf_{(\Lambda_\n \times B \times h[n, 0, 0]) \setminus B_N}\cf_A)).
    \end{multline*}
    For that, it is enough to show these two equalities
    \begin{equation}\label{eq:msdd-31}
        (\rho \circ \pi_2)^*T' \cdot g = (\Phi \times \id_{h[n, 0, 0]})^*T
    \end{equation}
    and
    \begin{multline}
        h \cdot (p' \circ \pi_2)^*(\cf_{E'}\cf_{(\Lambda_\n \times B' \times h[n_1, 0, 0]) \setminus B_{N'}}) \\
        = (p \circ (\Phi \times \id_{h[n, 0, 0]}))^*(\cf_E\cf_{(\Lambda_\n \times B \times h[n, 0, 0]) \setminus B_N}\cf_A). \label{eq:msdd-32}
    \end{multline}

    First, let us prove equality~\eqref{eq:msdd-31}. Let $(\lambda, (x_0, \xi_0), x', r', \xi, (z_1, z_2), (x, y))$ be a point of $\Lambda_\n \times D'[n, 0, 0][n, 0, 0]$. By definition of the morphism $g$, we get
    \begin{align*}
        &[(\Phi \times \id_{h[n, 0, 0]})^*T](\lambda, (x_0, \xi_0), x', r', \xi, (z_1, z_2), (x, y)) \\
        &\qquad= T(\lambda, (x_0, z_2, \xi_0, 0), (x', z_2), r', (\xi, 0), (x, y)) \\
        &\qquad= \cf_{\Ball((x', z_2), r')}(x, y)\Exp(\inner{(x, y)}{\lambda(\xi, 0)}) \\
        &\qquad= \cf_{\Ball(x', r')}(x)\Exp(\inner{x}{\lambda\xi}) \cdot \cf_{\Ball(z_2, r')}(y) \\
        &\qquad= T'(\lambda, (x_0, \xi_0), x', r', \xi, x) \cdot g(\lambda, (x_0, \xi_0), x', r', \xi, (z_1, z_2), (x, y)) \\
        &\qquad= [(\rho \circ \pi_2)^*T' \cdot g](\lambda, (x_0, \xi_0), x', r', \xi, (z_1, z_2), (x, y))
    \end{align*}
    which, by~\cite[Theorem~1]{CluckersHalupczok}, leads to equality~\eqref{eq:msdd-31}.

    Next, let us prove equality~\eqref{eq:msdd-32}. Let $(\lambda, (x_0, \xi_0), x', r', \xi, (z_1, z_2), (x, y))$ be a point of $\Lambda_\n \times D'[n, 0, 0][n, 0, 0]$. On the one hand, by definition of the morphism $h$, we get
    \begin{multline*}
        [h \cdot (p' \circ \pi_2)^*(\cf_{E'}\cf_{(\Lambda_\n \times B' \times h[n_1, 0, 0]) \setminus B_{N'}})](\lambda, (x_0, \xi_0), x', r', \xi, (z_1, z_2), (x, y)) \\
        = \cf_{\Ball(0, \alpha^-(\varphi_2))^{n_2}}(z_2)\cf_{E' \cap [(\Lambda_\n \times B' \times h[n_1, 0, 0]) \setminus B_{N'}]}(\lambda, (x_0, \xi_0), x', r', \xi)
    \end{multline*}
    and, on the other hand, we get
    \begin{multline*}
        [(p \circ (\Phi \times \id_{h[n, 0, 0]}))^*(\cf_E\cf_{(\Lambda_\n \times B \times h[n, 0, 0]) \setminus B_N}\cf_A)](\lambda, (x_0, \xi_0), x', r', \xi, (z_1, z_2), (x, y)) \\
        = \cf_{E \cap [(\Lambda_\n \times B \times h[n, 0, 0]) \setminus B_N] \cap A}(\lambda, (x_0, z_2, \xi_0, 0), (x', z_2), r', (\xi, 0)).
    \end{multline*}
    We can write
    \begin{multline}\label{eq:equivalence-1}
        \Sys{
            (\lambda, (x_0, \xi_0), x', r', \xi) &\in E' \cap [(\Lambda_\n \times B' \times h[n_1, 0, 0]) \setminus B_{N'}], \\
            z_2 &\in \Ball(0, \alpha^-(\varphi_2))^{n_2}
        } \\
        \iff \Sys{
            (x_0, \xi_0) &\in \A', \\
            \Ball(x', r') &\subset \Ball(x_0, \r'(x_0, \xi_0)), \\
            \xi &\in \Ball(\xi_0, \check\r'(x_0, \xi_0)), \\
            \ord\lambda &< N'((x_0, \xi_0), x', r'), \\
            z_2 &\in \Ball(0, \alpha^-(\varphi_2))^{n_2}
        }
    \end{multline}
    and
    \begin{multline}\label{eq:equivalence-2}
        (\lambda, (x_0, z_2, \xi_0, 0), (x', z_2), r', (\xi, 0)) \in E \cap [(\Lambda_\n \times B \times h[n, 0, 0]) \setminus B_N] \cap A \\
        \iff \Sys{
            (x_0, z_2, \xi_0, 0) &\in \A, \\
            \Ball((x', z_2), r') &\subset \Ball((x_0, z_2), \r(x_0, z_2, \xi_0, 0)), \\
            (\xi, 0) &\in \Ball((\xi_0, 0), \check\r(x_0, z_2, \xi_0, 0)), \\
            \ord\lambda &< N((x_0, z_2, \xi_0, 0), (x', z_2), r'), \\
            (x_0, \xi_0) &\in \A', \\
            \Ball(x', r') &\subset \Ball(x_0, \r'(x_0, \xi_0)), \\
            \xi &\in \Ball(\xi_0, \check\r'(x_0, \xi_0)), \\
            \ord\lambda &< N'((x_0, \xi_0), x', r'), \\
            z_2 &\in \Ball(0, \alpha^-(\varphi_2))^{n_2}
        } \\
        \iff \Sys{
            (x_0, z_2, \xi_0, 0) &\in \A, \\
            \Ball(x', r') &\subset \Ball(x_0, \r(x_0, z_2, \xi_0, 0)), \\
            r' &\geqslant \r(x_0, z_2, \xi_0, 0), \\
            \xi &\in \Ball(\xi_0, \check\r(x_0, z_2, \xi_0, 0)), \\
            \ord\lambda &< N((x_0, z_2, \xi_0, 0), (x', z_2), r'), \\
            (x_0, \xi_0) &\in \A', \\
            \Ball(x', r') &\subset \Ball(x_0, \r'(x_0, \xi_0)), \\
            \xi &\in \Ball(\xi_0, \check\r'(x_0, \xi_0)), \\
            \ord\lambda &< N'((x_0, \xi_0), x', r'), \\
            z_2 &\in \Ball(0, \alpha^-(\varphi_2))^{n_2}.
        }
    \end{multline}
    We want to prove the equivalence $\eqref{eq:equivalence-1} \iff \eqref{eq:equivalence-2}$. The implication~$\eqref{eq:equivalence-2} \implies \eqref{eq:equivalence-1}$ is immediate. Reciprocally, we assume that the condition~\eqref{eq:equivalence-1} is fulfilled. We need to verify the first five conditions of equation~\eqref{eq:equivalence-2}.
    \begin{itemize}
        \item Since $z_2 \in \Ball(0, \alpha^-(\varphi_2))^{n_2}$ and $(x_0, \xi_0) \in \A'$, we get $(x_0, z_2, \xi_0, 0) \in \A$.

        \item Since $\r'(x_0, \xi_0) \geqslant \r(x_0, z_2, \xi_0, 0)$ and $\Ball(x', r') \subset \Ball(x_0, \r'(x_0, \xi_0))$, we get
        \[ \Ball(x', r') \subset \Ball(x_0, \r(x_0, z_2, \xi_0, 0)). \]

        \item Since $\Ball(x', r') \subset \Ball(x_0, \r'(x_0, \xi_0))$, we get $r' \geqslant \r'(x_0, \xi_0) \geqslant \r(x_0, z_2, \xi_0, 0)$.

        \item Since $\check\r'(x_0, \xi_0) \geqslant \check\r(x_0, z_2, \xi_0, 0)$ and $\xi \in \Ball(\xi_0, \check \r'(x_0, \xi_0))$, we get
        \[ \xi \in \Ball(\xi_0, \check\r(x_0, z_2, \xi_0, 0)). \]

        \item Since
        \[ N'((x_0, \xi_0), x', r') \leqslant N((x_0, z_2, \xi_0, 0), (x', z_2), r') \]
        and $\ord\lambda < N'((x_0, \xi_0), x', r')$, we get
        \[ \ord\lambda < N((x_0, z_2, \xi_0, 0), (x', z_2), r'). \]
    \end{itemize}
    This implied equation~\eqref{eq:equivalence-1} and so the desired equivalence. Consequently, equality~\eqref{eq:msdd-32} is shown.

    In conclusion, we have proven equalities~\eqref{eq:msdd-31} and~\eqref{eq:msdd-32} which imply equality~\eqref{eq:msdd-3} and so equality~\eqref{eq:msdd-2}. This concludes the proof.
\end{proof}

\subsubsection{Wave front sets and kernels}

We give now the analog of Theorem~\ref{thm:WF-mot} with wave front sets. In the following, we will consider points instead of definable subassignments. Then we will always work over the residue fields of the considered points. For the next definition, we take a general positive integer $n$.

\begin{defi}[Raibaut~{\cite[Definition~6.9]{Raibaut}}]
    A point $(x_0, \xi_0, K)$ of $h[n, 0, 0] \times \B_\n^n$ is a \emph{$\Lambda_\n$-microlocally smooth point} of a distribution~$u$ of $\Sch'(h[n, 0, 0])\expe$ if there exist some integers $\r$ and $\check\r$ greater than~$\n$ and a definable morphism $N \colon B \to h[0, 0, 1]$ with
    \[ B \coloneq \{(x', r') \in h[n, 0, 0] \times h[0, 0, 1] \mid \Ball(x', r') \subset \Ball(x_0, \r)\} \]
    such that
    \[ \inner{u_{\Lambda_\n \times D}}{T}\cf_E\cf_{B_N} = \inner{u_{\Lambda_\n \times D}}{T}\cf_E \]
    where we consider the definable subassignments of $\Def_K$
    \begin{align*}
        D &\coloneq h[n, 0, 0] \times h[0, 0, 1] \times h[n, 0, 0], \\
        E &\coloneq \{(\lambda, x', r', \xi) \in \Lambda_\n \times B \times h[n, 0, 0] \mid \xi \in \Ball(\xi_0, \check\r)\}, \\
        B_N &\coloneq \{(\lambda, x', r', \xi) \in \Lambda_\n \times B \times h[n, 0, 0] \mid \ord\lambda \geqslant N(x', r')\}
    \end{align*}
    and the function $T$ of $\Sch_{\Lambda_\n \times D}(\Lambda_\n \times D[n, 0, 0])\expe$ defined by the equality
    \[ T(\lambda, x', r', \xi, x) \coloneq \cf_{\Ball(x', r')}(x)\Exp(\inner{x}{\lambda\xi}) \]
    for all points $(\lambda, x', r', \xi, x)$ of $\Lambda_\n \times D[n, 0, 0]$.

    A point $(x_0, \xi_0, K)$ of $h[n, 0, 0] \times (h[n, 0, 0] \setminus \{0\})$ is $\Lambda_\n$-microlocally smooth point of the distribution $u$ if there exists an element $\lambda$ of $\Lambda_\n(K)$ such that $\lambda\xi_0 \in \B_\n^n$ and the point $(x_0, \lambda\xi_0, K)$ is $\Lambda_\n$-microlocally smooth point of the distribution $u$.

    The complement of the set of points of $h[n, 0, 0] \times (h[n, 0, 0] \setminus \{0\})$ at which the distribution $u$ is $\Lambda$-microlocally smooth is the \emph{$\Lambda_\n$-wave front set} of the distribution~$u$, denoted by $\WF_{\Lambda_\n}(u)$.
\end{defi}

We get immediately the following corollary of Theorem~\ref{thm:WF-mot}. We take again the same notation of~§\ref{ssec:Schwartz}.

\begin{coro}\label{coro:WF-mot}
    Let $u$ be a distribution of $\Sch'(h[n, 0, 0])\expe$. Let $\varphi_2$ be a function of~$\Sch(h[n_2, 0, 0])\expe$ and let $u_{\varphi_2}$ be the induced distribution of Proposition~\ref{prop:induced}. We consider
    \begin{itemize}
        \item the set $S$ of definable subassignments $\A \subset h[n, 0, 0] \times \B_\n^n$ of $\Lambda_\n$-microlocally smooth definable data $(\A, \r, \check\r, N)$ of the distribution $u$;

        \item for such a definable subassignment $\A$ of $S$ and for a choice of a definable morphism $\alpha^-(\varphi_2)$, the definable subassignment $\A'$ of~$h[n_1, 0, 0] \times \B_\n^{n_1}$ constructed in Theorem~\ref{thm:WF-mot}, that is
        \begin{multline*}
            \A'(\alpha^-(\varphi_2)) \coloneq \{(x_0, \xi_0) \in h[n_1, 0, 0] \times \B_\n^{n_1} \\
            \mid \forall y \in \Ball(0, \alpha^-(\varphi_2))^{n_2}, (x_0, y, \xi_0, 0) \in \A\}.
        \end{multline*}
    \end{itemize}
    Then
    \[ \WF_{\Lambda_\n}(u_{\varphi_2}) \cap \points{h[n_1, 0, 0] \times \B_\n^{n_1}} \subset \bigcap_{\substack{\A \in S \\ \alpha^-(\varphi_2)}} \points{(h[n_1, 0, 0] \times \B_\n^{n_1}) \setminus \A'(\alpha^-(\varphi_2))}. \]
\end{coro}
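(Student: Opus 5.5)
The plan is to prove the contrapositive pointwise: every point of some $\A'(\alpha^-(\varphi_2))$ is $\Lambda_\n$-microlocally smooth for $u_{\varphi_2}$. Fix $\A \in S$, together with a $\Lambda_\n$-microlocally smooth definable data $(\A, \r, \check\r, N)$ of $u$, and fix a choice of $\alpha^-(\varphi_2)$ (and of $\alpha^+(\varphi_2)$). Let $(x_0, \xi_0, K)$ be a point of $\A'(\alpha^-(\varphi_2))$. We must show that $(x_0, \xi_0, K) \notin \WF_{\Lambda_\n}(u_{\varphi_2})$. Since $\xi_0 \in \B_\n^{n_1}$ and $\xi_0 \ne 0$, it suffices to show that $(x_0, \xi_0, K)$ is a $\Lambda_\n$-microlocally smooth point of $u_{\varphi_2}$ in the first sense of the definition, taking $\lambda = 1 \in \Lambda_\n(K)$ for the second sense.

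By Theorem~\ref{thm:WF-mot}, the quadruplet $(\A', \r', \check\r', N')$ is a $\Lambda_\n$-microlocally smooth definable data of $u_{\varphi_2}$. We specialize it at the point. We set $\r \coloneq \r'_K(x_0, \xi_0)$ and $\check\r \coloneq \check\r'_K(x_0, \xi_0)$. These are integers, and we need them to be at least $\n$:
\begin{itemize}
    \item for $\check\r$ this is the condition $\check\r'(-, \xi_0) \geqslant \n$ established in the proof of Theorem~\ref{thm:WF-mot};
    \item for $\r$ we can enlarge it if necessary, since shrinking the ball $\Ball(x_0, \r)$ only shrinks $B$.
\end{itemize}
We then define $N(x', r') \coloneq N'((x_0, \xi_0), x', r')$ on $B = \{(x', r') \mid \Ball(x', r') \subset \Ball(x_0, \r)\}$, which embeds into $B'$ via $(x', r') \mapsto ((x_0, \xi_0), x', r')$.

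The key step is to deduce the pointwise identity $\inner{u_{\varphi_2, \Lambda_\n \times D}}{T}\cf_E\cf_{B_N} = \inner{u_{\varphi_2, \Lambda_\n \times D}}{T}\cf_E$ over $K$ from equality~\eqref{eq:msdd} for $(\A', \r', \check\r', N')$. We do this by applying the pullback condition~\ref{it:pullback} along the definable morphism $\iota \colon (\lambda, x', r', \xi) \mapsto (\lambda, (x_0, \xi_0), x', r', \xi)$, defined over $K$ since $x_0$ and $\xi_0$ are $K$-points. Under this morphism $(\iota \times \id)^* T' = T$, and $\iota^{-1}(E') = E$ because $\xi \in \Ball(\xi_0, \check\r)$. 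Moreover $\iota^{-1}(B_{N'}) = B_N$, using the inclusion~\eqref{eq:inclusion-A} for $\A'$ to see that the parameters land in $B'$. Pulling back equality~\eqref{eq:msdd} and using that pullbacks are ring morphisms then gives the required identity.

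The main obstacle is bookkeeping over the base field. The definition of a microlocally smooth point works in $\Def_K$, whereas Theorem~\ref{thm:WF-mot} is stated over $k$. We must therefore justify base change of the distribution and of the data to $K$, for instance by viewing the point as a definable morphism from the subassignment cut out by the type of $(x_0, \xi_0)$ and using the pullback condition~\ref{it:pullback}. We also rely on the evaluation results~\cite{CluckersHalupczok} to identify the pulled-back functions.

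Finally, since the point $(x_0, \xi_0, K)$, the subassignment $\A \in S$ and the choice of $\alpha^-(\varphi_2)$ were arbitrary, every point of $\WF_{\Lambda_\n}(u_{\varphi_2}) \cap \points{h[n_1, 0, 0] \times \B_\n^{n_1}}$ lies outside every $\A'(\alpha^-(\varphi_2))$. This is exactly the stated inclusion.
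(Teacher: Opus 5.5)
Your argument is correct and is essentially the paper's proof: the paper simply applies \cite[Remark~6.10]{Raibaut} (points of the subassignment of a $\Lambda_\n$-microlocally smooth definable data lie outside the $\Lambda_\n$-wave front set) to the data $(\A', \r', \check\r', N')$ given by Theorem~\ref{thm:WF-mot}, and your specialization of that data at $(x_0, \xi_0, K)$ via $\iota$ and the pullback condition just unpacks this remark. The passage to $\Def_K$ that you flag as the main obstacle is exactly what that remark packages, so cite it rather than build it by hand, and drop the device of a subassignment cut out by the type of $(x_0, \xi_0)$, since that type is not definable over $k$ in general.
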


\begin{proof}
    It follows from Theorem~\ref{thm:WF-mot} and~\cite[Remark~6.10]{Raibaut}.
\end{proof}

\begin{rema}
    The proof of Theorem~\ref{thm:extension} uses product of distributions which, in the motivic setting, is only defined locally (namely up to a microlocally smooth definable data) with existential condition on the tensor product (see~\cite[Definition~6.31]{Raibaut}). Thus, a local motivic version of Theorem~\ref{thm:extension} should be obtained, which will be done in a future work.
\end{rema}

\printbibliography

\end{document}